\makeatletter \@addtoreset{equation}{section} \makeatother
\theoremstyle{plain}
\newtheorem{thm}{Theorem}[section]
\newtheorem{prop}[thm]{Proposition}
\newtheorem{lemma}[thm]{Lemma}
\newtheorem{cor}[thm]{Corollary}
\theoremstyle{definition} \theoremstyle{remark}
\newtheorem{rmk}[thm]{Remark}
\newtheorem{defn}[thm]{Definition}
\newenvironment{pfof}[1]{\vspace{1ex}\noindent{\em Proof of
#1}.}{\hfill\qed\vspace{1ex}}
\newcommand{\wt}{\widetilde}
\newcommand{\diam}{\operatorname{diam}}
\newcommand{\dist}{\operatorname{dist}}
\newcommand{\supp}{\operatorname{supp}}
\newcommand{\Cov}{\operatorname{Cov}}
\newcommand{\overbar}[1]{\mkern 1.5mu\overline{\mkern-1.5mu#1\mkern-1.5mu}\mkern 1.5mu} 
\newcommand{\overbarr}[1]{\mkern 3.5mu\overline{\mkern-3.5mu#1\mkern-0.5mu}\mkern 0.5mu} 
\newcommand{\bX}{{\overbarr X}} 
\newcommand{\bY}{{\overbar Y}} 
\newcommand{\bF}{\overbarr F} 
\newcommand{\R}{{\mathbb R}}
\newcommand{\N}{{\mathbb N}}
\newcommand{\C}{{\mathbb C}}
\newcommand{\Z}{{\mathbb Z}}
\renewcommand{\P}{{\mathbb{P}}}
\newcommand{\cC}{{\mathcal C}}
\newcommand{\cD}{{\mathcal D}}
\newcommand{\cS}{{\mathcal S}}
\newcommand{\cW}{{\mathcal{W}}}
 \newcommand{\eps}{{\epsilon}}
 \newcommand{\Cl}{\operatorname{cl}}
\newcommand{\Int}{\operatorname{Int}}
\newcommand{\Lip}{\operatorname{Lip}}
\newcommand{\Leb}{\operatorname{Leb}}
 \def \fX {{\mathfrak X}}
\begin{document}

\title{Mixing properties and statistical limit theorems for singular hyperbolic 
  flows \\ without a smooth stable foliation}

\author{V. Ara\'ujo and I. Melbourne}

\address{Vitor Ara\'ujo,
 Departamento de Matem\'atica, Universidade Federal da Bahia\\
Av. Ademar de Barros s/n, 40170-110 Salvador, Brazil.}
\email{vitor.d.araujo@ufba.br,
 www.sd.mat.ufba.br/$\sim$vitor.d.araujo}

\address{Ian Melbourne,
Mathematics Institute, University of Warwick, Coventry CV4 7AL, UK}
\email{i.melbourne@warwick.ac.uk}

\thanks{
V.A.
  is partially supported by CNPq,
  PRONEX-Dyn.Syst. and FAPESB (Brazil).  
I.M. is partially supported by 
	by a European Advanced Grant StochExtHomog (ERC AdG 320977) and by CNPq
  (Brazil) through PVE grant number 313759/2014-6.  
We are grateful to the referee for very helpful suggestions which improved the readability of the paper.
}

  \begin{abstract}
Over the last 10 years or so, advanced statistical properties, including exponential decay of correlations, have been established for certain classes of singular hyperbolic flows in three dimensions.  The results apply in particular to the classical Lorenz attractor.  However, many of the proofs rely heavily on the smoothness of the stable foliation for the flow.  

In this paper, we show that many statistical properties hold for singular hyperbolic flows with no smoothness assumption on the stable foliation.  These properties include existence of SRB measures, central limit theorems and associated invariance principles, as well as results on mixing and rates of mixing.  The properties hold equally for singular hyperbolic flows in higher dimensions provided the center-unstable subspaces are two-dimensional.
\end{abstract}

 \date{3 December 2017.  Revised 2 March 2019.}

\maketitle
% \tableofcontents

%%%%%%%%%%%%%%%%%%%%%
 \section{Introduction}  \label{sec:intro}

Singular hyperbolicity is a far-reaching generalization of Smale's notion of Axiom~A~\cite{Smale67} that allows for the inclusion of equilibria (also known as singular points or steady-states) and incorporates the classical Lorenz attractor~\cite{Lorenz63} as well as the geometric Lorenz attractors of~\cite{AfraimovicBykovSilnikov77,GuckenWilliams79}.
For three-dimensional flows, singular hyperbolic attractors are precisely the ones that are robustly transitive,
and they reduce to Axiom A attractors when there are no equilibria~\cite{MoralesPacificoPujals04}.

For the classical Lorenz attractor, strong statistical properties such as exponential decay of correlations, the central limit theorem (CLT), and associated invariance principles have been established in~\cite{AraujoM16,AraujoM17,AMV15,HM07}.
However the proofs rely heavily on the existence of a smooth stable foliation for the flow.
Various issues regarding the existence and smoothness of the stable foliation are clarified in~\cite{AraujoM17}; a topological foliation always exists, and an analytic proof of smoothness of the foliation for the classical Lorenz attractor (and nearby attractors) is given in~\cite{AraujoM17,AMV15}.

Even for three-dimensional flows, the stable foliation for a singular hyperbolic attractor need not be better than H\"older.
In this paper, we consider statistical properties for singular hyperbolic attractors that do not have a smooth stable foliation.   We do not restrict to three-dimensional flows, but our main results assume that the stable foliation has codimension two.  

\subsubsection*{Main results}
For codimension two singular hyperbolic attracting sets, we prove that the stable foliation is at least H\"older continuous (Theorem~\ref{thm:holder}), and using Pesin theory~\cite{BarreiraPesin} we deduce that the stable holonomies are absolutely continuous with H\"older Jacobians (Theorem~\ref{thm:H}).  As a consequence of this, we obtain that the 
stable holonomies for the associated Poincar\'e map are $C^{1+\eps}$
(since they are one-dimensional with H\"older Jacobians).
This extends results of~\cite{AfraimovicBykovSilnikov77,Robinson81,Shashkov94} who obtain a $C^1$ result for geometric Lorenz attractors (see the discussion after equation~(6) in~\cite{OT17}).
Quotienting out by the stable foliation, we obtain a $C^{1+\eps}$
one-dimensional expanding map.
We can now proceed following~\cite{APPV09} to obtain a spectral decomposition for the singular hyperbolic attracting set (Theorem~\ref{thm:spectral}).

To study statistical properties, we focus attention on  the transitive components of a singular  hyperbolic attracting set; these are called singular hyperbolic attractors.  In the Axiom~A case, the CLT and associated invariance principles are well-known~\cite{DenkerPhilipp84,MT04,Ratner73} and we extend these results to general (codimension two) singular hyperbolic attractors.
In particular, as described in Section~\ref{sec:statflow}, the (functional) CLT and related results follow from~\cite{BMprep} using the results in this paper.
Moreover, many strong limit laws are obtained for the associated Poincar\'e maps in Theorem~\ref{thm:statf}.

Mixing and rates of mixing for Axiom~A attractors are less well-understood even today, but an open and dense set of Axiom~A attractors have superpolynomial decay of correlations~\cite{Dolgopyat98a,FMT07}.   Theorem~\ref{thm:super} shows that the same result holds for singular hyperbolic attractors.  
As a consequence, Corollary~\ref{cor:super}, we obtain the CLT and almost sure invariance principle for the time-one map of the flow for this open and dense set of singular hyperbolic attractors and sufficiently smooth observables.
(We note that such results are much more delicate for time-one maps than for the flow and for Poincar\'e maps.)

In fact, for singular hyperbolic attractors containing at least one equilibrium and with a smooth stable foliation, mixing~\cite{LMP05}, superpolynomial decay of correlations~\cite{AMV15}, and exponential decay of correlations~\cite{AraujoM16} are automatic subject to a certain indecomposability condition (locally eventually onto).  Theorem~\ref{thm:mix} yields a similar result on automatic mixing when there is not a smooth stable foliation.  However, automatic rates of mixing, or any results on exponential decay of correlations, seems beyond current techniques when the stable foliation is not smooth.

\subsubsection*{Example}

In a recent paper, Ovsyannikov \& Turaev~\cite{OT17} (see also previous work of~\cite{DumortierKokubuOka95}) give an analytic proof of singular hyperbolic attractors in the extended Lorenz model
\[
\dot x = y, \quad \dot y=-\lambda y+\gamma x(1-z)-\delta x^3,
\quad \dot z=-\alpha z+\beta x^2.
\]
The attractors contain precisely one equilibrium, namely the origin, and are of geometric Lorenz type~\cite{AfraimovicBykovSilnikov77,GuckenWilliams79}.
The eigenvalues of the linearized equations at the equilibrium are close to $-1$, $-1$ and $1$ (up to a scaling) for the parameters considered in~\cite{OT17}, so the standard $q$-bunching condition~\cite{AraujoM17,HPS77} guaranteeing a $C^q$ stable foliation holds only for $q$ close to zero.  In this situation it is anticipated that the foliation fails to be $C^1$ except in pathological cases.
In particular, previous results on statistical properties for singular hyperbolic flows do not apply.  However, the results in the present paper do not require a smooth foliation.  It follows that the attractors in~\cite{OT17} satisfy the 
statistical limit laws described in this paper.
Moreover, there is an open set $\mathcal{U}$ within the space of  $C^2$ flows on $\R^3$, containing the extended Lorenz examples of~\cite{OT17}, that satisfy these statistical limit laws.  In addition, an open and dense set of flows in $\mathcal{U}$ have  superpolynomial decay of correlations.

\subsubsection*{Spectral decompositions}
Whereas the results on statistical properties for singular hyperbolic flows in this paper are completely new, we note that there are existing results on spectral decompositions~\cite{APPV09,LeplaideurYang}.
The decomposition in~\cite{APPV09} is for three-dimensional flows and our method extends~\cite{APPV09} in the more general codimension two situation.  The method in~\cite{LeplaideurYang} works directly with the flow and does not require the codimension two restriction.  
However~\cite{APPV09,LeplaideurYang} both make liberal use of Pesin theory, including results that seem currently unavailable in the literature.  The main issue, as clarified in~\cite{AraujoM17}, is that {\em a priori} the stable lamination over a partially hyperbolic attracting set~$\Lambda$ need not cover a neighborhood of $\Lambda$.  The stable bundle extends to an invariant contracting bundle over a neighborhood $U\supset\Lambda$ and this integrates to a topological foliation of~$U$.  However, the complementary center-unstable bundle does not extend invariantly, so the resulting extended splitting is not invariant.  This means that the application of Pesin theory in~\cite{APPV09,LeplaideurYang} is inaccurate.  It is likely that the desired results hold (some aspects were extended to noninvariant splittings already in~\cite{AraujoM17}) but currently the arguments seem incomplete.

In this paper, we make the approach in~\cite{APPV09} completely rigorous by bypassing the issue of noninvariance of the extended splitting.  Theorem~\ref{thm:fol} below shows that {\em a posteriori} the stable bundle restricted to $\Lambda$ integrates to a topological foliation.  This relies heavily on the special structure associated to a codimension two singular hyperbolic attracting set and uses also the information about the extended bundle~\cite{AraujoM17}.  Consequently, we can work with the nonextended splitting which is invariant and Pesin theory applies.  Also, using~\cite{PSW97} we show that the foliation is H\"older which simplifies the arguments in~\cite{APPV09}.

\subsubsection*{Sectional hyperbolicity}
Finally, we remark on the restriction to singular hyperbolic attracting sets that are codimension two.  The natural setting in general is to consider {\em sectional} hyperbolic attracting sets~\cite{MetzgerMorales08} (in the codimension two case, sectional and singular hyperbolicity are the same).
The proof of Theorem~\ref{thm:fol} (specifically Proposition~\ref{prop:cross}) relies on the restriction to codimension two.  Nevertheless,
we expect that in the sectional hyperbolic setting, our results on the stable foliation should go through largely unchanged (after adapting various arguments to deal with the noninvariant splitting).  However, the quotient map is higher-dimensional and so Pesin theory only gives a H\"older Jacobian; the map itself is no better than H\"older.  Hence the arguments in Section~\ref{sec:stat} and~\ref{sec:SH} on spectral decompositions and statistical properties break down; this remains the subject of future work.

\subsubsection*{}

The remainder of the paper is organized as follows.
In Section~\ref{sec:PH}, we review background material on partially hyperbolic attracting sets and  singular hyperbolicity, and
recall results on stable foliations from~\cite{AraujoM16}.
In Section~\ref{sec:f}, we construct a global Poincar\'e map~$f$ associated to any partially hyperbolic attracting set, following (and modifying) the construction in~\cite{APPV09}.
Section~\ref{sec:UH} establishes that $f$ is uniformly hyperbolic (with singularities) when the attracting set is singular hyperbolic.  

In Section~\ref{sec:fol}, we show that the stable lamination over an attracting codimension two singular hyperbolic set is a topological foliation.  In Section~\ref{sec:regWs},
we establish H\"older regularity and absolute continuity of the stable foliation, and show that the stable holonomies have H\"older Jacobians.
Using this, we obtain a uniformly expanding
piecewise $C^{1+\eps}$ quotient map $\bar f$ in Section~\ref{sec:barf}.

Finally, in Sections~\ref{sec:stat} and~\ref{sec:SH}, we prove results on spectral decompositions, statistical limit laws, and rates of mixing, for $\bar f$, $f$, and the underlying flow.

\subsubsection*{Notation}
Let $(M,d)$ be a metric space and $\eta\in(0,1)$. Given $v:M\to\R$,
define ${\|v\|}_{C^\eta}=|v|_\infty+{|v|}_{C^\eta}$ where
${|v|}_{C^\eta}=\sup_{x\neq x'}|v(x)-v(x')|/d(x,x')^\eta$.  We say that $v$ is $C^\eta$ and
write
$v\in C^\eta(M)$ if ${\|v\|}_{C^\eta}<\infty$.
%%%%%%%%%%%%%%%%%%%%%

\section{Singular  hyperbolic attracting sets}
\label{sec:PH}

In this section, we define what is understood as a singular
hyperbolic attracting set.
Throughout this paper, we restrict mainly to the case where the center-unstable subspace is two-dimensional.

Let $M$ be a compact Riemannian manifold and $\fX^r(M)$,
$r>1$, be the set of $C^r$ vector fields on $M$.
Let $Z_t$ denote the flow generated by $G\in\fX^r(M)$.
Given a compact invariant set $\Lambda$ for $G\in
\fX^r(M)$, we say that $\Lambda$ is \emph{isolated} if
there exists an open set $U\supset \Lambda$ such that
$
\Lambda =\bigcap_{t\in\R}Z_t(U).
$
If $U$ can be chosen so that $Z_t(U)\subset U$ for
all $t>0$, then we say that $\Lambda$ is an \emph{attracting set}.

\begin{defn}
\label{def:PH}
Let $\Lambda$ be a compact invariant set for $G \in
\fX^r(M)$.  We say that $\Lambda$ is {\em partially
  hyperbolic} if the tangent bundle over $\Lambda$ can be
written as a continuous $DZ_t$-invariant sum
$$
T_\Lambda M=E^s\oplus E^{cu},
$$
where $d_s=\dim E^s_x\ge1$ and $d_{cu}=\dim E^{cu}_x=2$ for $x\in\Lambda$,
and there exist constants $C>0$, $\lambda\in(0,1)$ such that
for all $x \in \Lambda$, $t\ge0$, we have
\begin{itemize}
\item uniform contraction along $E^s$:
\begin{align}\label{eq:contract}
\|DZ_t | E^s_x\| \le C \lambda^t;
\end{align}

\item domination of the splitting:
\begin{align}\label{eq:domination}
\|DZ_t | E^s_x\| \cdot \|DZ_{-t} | E^{cu}_{Z_tx}\| \le C \lambda^t.
\end{align}
\end{itemize}
We refer to $E^s$ as the stable bundle and to
$E^{cu}$ as the center-unstable bundle.
A {\em partially hyperbolic attracting set} is a partially hyperbolic set that is also an attracting set.
\end{defn}

\begin{defn} \label{def:VE}
The center-unstable bundle $E^{cu}$ is \emph{volume
  expanding} if there exists $K,\theta>0$ such that
$|\det(DZ_t| E^{cu}_x)|\geq K  e^{\theta t}$ for all
$x\in \Lambda$, $t\geq 0$.
\end{defn}

If $\sigma\in M$ and $G(\sigma)=0$, then $\sigma$ is called
an {\em equilibrium}.  An invariant set is \emph{nontrivial}
if it is neither a periodic orbit nor an equilibrium.

\begin{defn} \label{def:singularset}
Let $\Lambda$ be a compact nontrivial invariant set for $G \in
{\fX}^r(M)$.  We say that $\Lambda$ is a
\emph{singular hyperbolic set} if all equilibria
in $\Lambda$ are hyperbolic, and $\Lambda$ is partially
hyperbolic with volume expanding center-unstable bundle. 
A singular hyperbolic set which is also an attracting set is
called a {\em singular hyperbolic attracting set}.
\end{defn}

\begin{rmk} \label{rmk:per}
A singular hyperbolic attracting set contains no isolated periodic orbits.
For such a periodic orbit would have to be a periodic sink, violating volume expansion.
\end{rmk}

A subset $\Lambda \subset M$ is \emph{transitive} if it has
a full dense orbit, that is, there exists $x\in \Lambda$ such that
$\Cl{\{Z_tx:t\ge0\}}=\Lambda= \Cl{\{Z_tx:t\le0\}}$.

\begin{defn}\label{def:attractor}
  A \emph{singular hyperbolic attractor} is a transitive
  singular hyperbolic attracting set.
\end{defn}

\begin{prop} \label{prop:like}
Suppose that $\Lambda$ is a singular hyperbolic attractor with $d_{cu}=2$, and
let $\sigma\in\Lambda$ be an equilibrium.  Then
$\sigma$ is
 \emph{Lorenz-like}.   That is, 
$DG(\sigma)|E^{cu}_\sigma$ has real eigenvalues $\lambda^s$, $\lambda^u$ satisfying
$-\lambda^u<\lambda^s<0<\lambda^u$.
\end{prop}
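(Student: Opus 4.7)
The plan is to pin down the spectrum of the $2\times 2$ block $A_{cu}:=DG(\sigma)|_{E^{cu}_\sigma}$ by exponentiating the three defining conditions of singular hyperbolicity at the fixed point $\sigma$, and then to use transitivity of $\Lambda$ together with the fact that $G$ takes values in $E^{cu}$ on $\Lambda$ to exclude the non-Lorenz-like spectral configurations.

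Since $\sigma$ is a fixed point, $DZ_t(\sigma)=\exp(t\,DG(\sigma))$ for every $t$, and the $DZ_t$-invariance of the splitting forces $DG(\sigma)=A_s\oplus A_{cu}$ block-diagonally. Applying \eqref{eq:contract} at $\sigma$ gives $\max\Re\operatorname{spec}(A_s)\le\log\lambda<0$; Definition~\ref{def:VE} at $\sigma$ gives $\operatorname{tr}(A_{cu})\ge\theta>0$; and \eqref{eq:domination} at $\sigma$ gives $\min\Re\operatorname{spec}(A_{cu})>\max\Re\operatorname{spec}(A_s)$. Writing the eigenvalues of the real $2\times 2$ matrix $A_{cu}$ as $\mu_1,\mu_2\in\C$ with $\Re\mu_1\le\Re\mu_2$, these inequalities already force $\Re\mu_2>0$ and leave exactly two configurations to exclude in order to reach the Lorenz-like conclusion: (a) $\mu_1=\bar\mu_2$ with $\Im\mu_1\neq 0$, and (b) $\mu_1,\mu_2$ real with $0\le\mu_1\le\mu_2$.

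In each of (a) and (b), $E^{cu}_\sigma$ coincides with the full unstable subspace of $DG(\sigma)$, so $\dim W^u(\sigma)=2$ and $W^u(\sigma)\subset\Lambda$ is a two-dimensional immersed invariant submanifold with $T_xW^u(\sigma)=E^{cu}_x$ throughout. To reach a contradiction I would first establish the auxiliary fact that $G(x)\in E^{cu}_x$ for every $x\in\Lambda$: any nontrivial $E^s$-component of $G(x)$ would be expanded backward in time by the uniform contraction of $E^s$, and uniform transversality of the splitting over compact $\Lambda$ would then force $\|G(Z_{-t}x)\|\to\infty$, contradicting the boundedness of $\|G\|$ on $\Lambda$. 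Combined with transitivity of $\Lambda$ I would then pick $y\in\Lambda\setminus W^s(\sigma)$ whose orbit enters every neighborhood of $\sigma$, extract a projective subsequential limit $[v]\in\R P^1(E^{cu}_\sigma)$ of $G(Z_{t_n}y)/\|G(Z_{t_n}y)\|$ for $t_n\to\infty$ with $Z_{t_n}y\to\sigma$, and use $G(Z_sx)=DZ_sG(x)$ together with continuity of $Z_s$ at $\sigma$ to show that the projective $e^{sA_{cu}}$-orbit of $[v]$ consists entirely of such limit directions. In case~(a) this projective orbit is dense in $\R P^1(E^{cu}_\sigma)$ by rotation, and exploiting that on $W^u(\sigma)$ the direction of $G$ is constrained to specific tangent directions produces flow-invariant real lines in $E^{cu}_\sigma$ that a complex pair of eigenvalues cannot admit. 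In case~(b) the limit $[v]$ is pinned to an eigendirection with exponent $\mu_i\ge 0$, and while $Z_sx_n$ shadows the linearized flow near $\sigma$ the estimate $\|G(Z_sx_n)\|\sim e^{s\mu_i}\|G(x_n)\|$ would drive $\|G(Z_sx_n)\|$ above $\|G\|_{L^\infty(\Lambda)}$ before the orbit exits a fixed neighborhood of $\sigma$, again a contradiction.

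Once (a) and (b) are excluded, $\mu_1<0<\mu_2$ are real, and the already-derived $\mu_1+\mu_2=\operatorname{tr}(A_{cu})>0$ rearranges to $-\mu_2<\mu_1<0<\mu_2$, i.e.\ $-\lambda^u<\lambda^s<0<\lambda^u$ with $\lambda^s:=\mu_1$, $\lambda^u:=\mu_2$. I expect the hardest step to be the transitivity-based exclusion of (a) and (b): the purely linear bounds from partial hyperbolicity at $\sigma$ do not by themselves force $\Re\mu_1<0$, so one genuinely needs the global structure of the attractor (nontriviality, invariance, and the auxiliary fact $G\in E^{cu}$ on $\Lambda$) to finish.
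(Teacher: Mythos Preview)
Your spectral setup is correct and your identification of the auxiliary fact $G(x)\in E^{cu}_x$ on $\Lambda$ is exactly one of the two ingredients the paper uses. However, your proposed exclusions of cases~(a) and~(b) do not work as written. In case~(a), there is no constraint on the direction of $G$ inside $W^u(\sigma)$ beyond $G(x)\in T_xW^u(\sigma)=E^{cu}_x$, which is the full two-plane; nothing in your sketch produces a flow-invariant real line in $E^{cu}_\sigma$. In case~(b), the growth estimate $\|G(Z_sx_n)\|\sim e^{s\mu_i}\|G(x_n)\|$ cannot yield a contradiction: as $x_n\to\sigma$ you have $\|G(x_n)\|\to 0$, and the orbit exits any fixed linearization neighborhood precisely when $\|Z_sx_n-\sigma\|$ reaches order $\epsilon$, at which point $\|G(Z_sx_n)\|$ is itself of order~$\epsilon$ regardless of $n$---bounded, not blowing up.

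The paper's argument is shorter and handles (a) and (b) in one stroke by looking at the \emph{stable} side of $\sigma$ instead of the unstable one. If both eigenvalues of $A_{cu}$ have positive real part, the local stable manifold $\gamma$ of $\sigma$ is $d_s$-dimensional with $T_\sigma\gamma=E^s_\sigma$. For any $p\in\gamma\cap\Lambda$ one has $T_p\gamma=E^s_p$ (the tangent to $\gamma$ is contracted, and by domination $E^s$ is the unique such subbundle), so flow-invariance of $\gamma$ gives $G(p)\in E^s_p$. Combined with your auxiliary fact $G(p)\in E^{cu}_p$, this forces $G(p)=0$ and hence $\gamma\cap\Lambda=\{\sigma\}$. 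Transitivity then furnishes $x\in\Lambda\setminus\{\sigma\}$ with $\sigma\in\omega(x)$, and the standard local picture near a hyperbolic saddle produces a point of $(\gamma\setminus\{\sigma\})\cap\omega(x)\subset(\gamma\setminus\{\sigma\})\cap\Lambda$, a contradiction. The idea you are missing is the second inclusion $G(p)\in E^s_p$ coming from the stable-manifold tangency; the $E^{cu}$-dynamics alone is not enough.
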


\begin{proof}
It follows from Definition~\ref{def:singularset} that $\sigma$ is a hyperbolic saddle and that at most two eigenvalues have positive real part.
If there is only one such eigenvalue $\lambda^u>0$ then the constraints on $\lambda^s$ follow from volume expansion.

Let $\gamma$ be the local stable manifold for $\sigma$.
It remains to rule out the case $\dim \gamma=\dim M-2$.
In this case, $T_p\gamma=E^s_p$ for all $p\in \gamma\cap\Lambda$ and in particular $G(p)\in E^s_p$.
Also, $G(p)\in E^{cu}_p$
(see for example~\cite[Lemma~6.1]{AraujoPacifico}),
so we deduce that 
$G(p)=0$ for all $p\in \gamma\cap\Lambda$ and hence that
$\gamma\cap\Lambda=\{\sigma\}$.

On the other hand, $\Lambda$ is transitive and nontrivial, so there exists $x\in\Lambda\setminus\{\sigma\}$ such that $\sigma\in\omega(x)$.
  By the local behavior of orbits near hyperbolic saddles, there exists
  $p\in
  (\gamma\setminus\{\sigma\})\cap\omega(x)\subset
  (\gamma\setminus\{\sigma\})\cap\Lambda$ which as we have seen is impossible.
\end{proof}

We end this section by recalling/extending some results from~\cite{AraujoM17}.
These results hold for general $d_{cu}\ge2$.

\begin{prop} \label{prop:Es} Let $\Lambda$ be a partially
  hyperbolic attracting set.  The stable bundle $E^s$ over
  $\Lambda$ extends to a continuous uniformly contracting
  $DZ_t$-invariant bundle $E^s$ over an open neighborhood of
  $\Lambda$.
\end{prop}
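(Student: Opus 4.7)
The plan is a standard graph transform argument extending $E^s$ across the attracting basin. Using continuity of the splitting on the compact set $\Lambda$, first extend $E^s$ and $E^{cu}$ continuously (but not necessarily invariantly) to continuous distributions $\wt E^s$ and $\wt E^{cu}$ of $d_s$- and $d_{cu}$-dimensional subspaces on an open neighborhood $V_0\supset\Lambda$, with $\wt E^s_x\oplus \wt E^{cu}_x=T_xM$ for all $x\in V_0$. Since $\Lambda$ is attracting, $\Lambda=\bigcap_{t\ge 0}Z_t(\ov U)$, so for some $T>0$ we have $Z_T(\ov U)\subset V_0$; the open set $W:=\Int Z_T(\ov U)$ is then a forward-invariant neighborhood of $\Lambda$ contained in $V_0$.

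For $x\in W$, any $d_s$-dimensional subspace of $T_xM$ transverse to $\wt E^{cu}_x$ can be written uniquely as the graph of a linear map $A_x\colon \wt E^s_x\to\wt E^{cu}_x$. Write $DZ_t$ in block form with respect to $\wt E^s\oplus\wt E^{cu}$. Since on $\Lambda$ this splitting coincides with the genuine invariant splitting, the off-diagonal blocks of $DZ_t$ vanish identically on $\Lambda$, and thus by continuity they can be made arbitrarily small on $W$ by shrinking $T$'s neighborhood appropriately. The induced graph transform $\mathcal{T}_t$ on the space of continuous sections $x\mapsto A_x$ is then, for $t$ sufficiently large, a uniform contraction on a small ball about $A\equiv 0$, with contraction rate essentially governed by the domination constant in \eqref{eq:domination}. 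The contraction mapping theorem produces a unique fixed point $A^*$; the corresponding bundle $E^s_x:=\operatorname{graph}(A^*_x)$ is $DZ_t$-invariant on $W$ and agrees with the original $E^s$ on $\Lambda$, since over $\Lambda$ the zero section $A\equiv 0$ already satisfies the fixed-point equation and uniqueness forces $A^*|_\Lambda=0$.

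Continuity of the extended $E^s$ on $W$ is inherited from the continuous dependence of the contraction fixed point on the base point $x$. Uniform contraction of $DZ_t$ along the extended $E^s$ then transfers from \eqref{eq:contract} on $\Lambda$ by continuity of $(t,x)\mapsto DZ_t(x)$, compactness of $\ov W$, and forward-invariance of $W$, at the cost of slightly worse constants $C'$ and $\lambda'\in(\lambda,1)$ which still give uniform exponential contraction.

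The main technical point — and the only step that is not completely routine — is the verification that the graph transform is a uniform contraction on a neighborhood of $\Lambda$ rather than just on $\Lambda$ itself. On $\Lambda$, the contraction of $\mathcal{T}_t$ is an immediate algebraic consequence of the vanishing off-diagonal blocks together with \eqref{eq:domination}; off of $\Lambda$ one must quantify the smallness of these blocks and match it against the desired contraction rate. This is handled by choosing $t$ large first (to make the diagonal part strongly contracting via domination) and only then shrinking $W$ (to make the off-diagonal perturbation small compared to this rate); the order of quantifiers is the only subtle aspect.
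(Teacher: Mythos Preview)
The paper's ``proof'' is simply a citation to \cite[Proposition~3.2]{AraujoM17}, so there is no argument in the paper to compare against; your sketch is in fact more detailed than what the paper provides, and it follows the standard route that the cited reference takes.

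Your outline is essentially correct, but two small points deserve attention. First, you do not specify the direction of the graph transform. For the \emph{stable} bundle over a forward-invariant neighbourhood, the transform that contracts is the \emph{pull-back}: $(\mathcal{T}_t A)_x$ is the unique graph over $\wt E^s_x$ whose image under $DZ_t$ is $\operatorname{graph}(A_{Z_t x})$. This is well-defined precisely because $Z_t x\in W$ by forward invariance, and its Lipschitz constant is controlled by $\|DZ_t|\wt E^s_x\|\cdot\|DZ_{-t}|\wt E^{cu}_{Z_t x}\|$, which is where the domination estimate~\eqref{eq:domination} enters. The push-forward direction is not a contraction here, so it is worth being explicit. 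Second, the fixed point you obtain is \emph{a priori} invariant only under $DZ_t$ for the fixed large $t$ you chose. Invariance under the full flow follows from a short commutation argument: for any $s\ge0$ the bundle $DZ_s(E^s)$ is also a fixed point of $\mathcal{T}_t$ (since $Z_s$ and $Z_t$ commute) lying in the same small ball, hence equals $E^s$ by uniqueness. You should state this step.
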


\begin{proof}  See~\cite[Proposition~3.2]{AraujoM17}.
\end{proof}

Let $\cD^k$ denote the $k$-dimensional open unit disk and
let $\mathrm{Emb}^r(\cD^k,M)$ denote the set of $C^r$
embeddings $\phi:\cD^k\to M$ endowed with the $C^r$ distance.

\begin{prop}\label{prop:Ws}
Let $\Lambda$ be a partially hyperbolic attracting set.
	There exists a positively invariant neighborhood $U_0$
	of $\Lambda$, and constants $C>0$, $\lambda\in(0,1)$, such
	that the following are true:

\vspace{1ex}
 \noindent(a)
For every point $x \in U_0$ there is a $C^r$ embedded $d_s$-dimensional disk
  $W^s_x\subset M$, with $x\in W^s_x$, such that
\begin{enumerate}
	\item $T_xW^s_x=E^s_x$.
\item $Z_t(W^s_x)\subset W^s_{Z_tx}$ for all $t\ge0$.
\item $d(Z_tx,Z_ty)\le C\lambda^t d(x,y)$ for all $y\in W^s_x$, $t\ge0$.
\end{enumerate}

\vspace{1ex}
\noindent(b) The disks $W^s_x$ depend continuously on $x$ in the $C^0$ topology: there is a continuous map $\gamma:U_0\to {\rm Emb}^0(\cD^{d_s},M)$ such that
$\gamma(x)(0)=x$ and $\gamma(x)(\cD^{d_s})=W^s_x$.
Moreover, there exists $L>0$ such that $\Lip\gamma(x)\le L$ for all $x\in U_0$.

\vspace{1ex}
\noindent(c) The family of disks $\{W^s_x:x\in U_0\}$ defines a topological foliation of $U_0$.
\end{prop}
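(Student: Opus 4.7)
The plan is to pass to a positively invariant neighborhood on which the extended stable bundle is defined and $DZ_t$-invariant, and then apply the classical Hadamard--Perron graph transform. By Proposition~\ref{prop:Es}, $E^s$ extends to a continuous uniformly contracting $DZ_t$-invariant bundle on some open $U'\supset\Lambda$, and since $\Lambda$ is an attracting set there is an open $U\supset\Lambda$ with $Z_t(U)\subset U$ for $t\ge 0$ and $\Lambda=\bigcap_{t\ge 0}Z_t(\overline U)$. The compacta $Z_t(\overline U)$ decrease to $\Lambda$, so $Z_{t_0}(\overline U)\subset U'$ for some $t_0>0$, and $U_0:=Z_{t_0}(\inter U)$ is open, positively invariant, and contained in $U'$.

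Next I would build the disks $W^s_x$ for $x\in U_0$ via the graph transform along the forward orbit $\{x_n=Z_{nT}(x)\}_{n\ge 0}$ for a fixed step $T>0$ chosen large enough that $\|DZ_T|E^s\|\le\mu<1$ uniformly on $U_0$. Fix a continuous complementary bundle $F$ with $TM|_{U'}=E^s\oplus F$. In exponential charts at each $x_n$ the map $Z_T$ has the form $(u,v)\mapsto(A_n u+\alpha_n(u,v),\,B_n v+\beta_n(u,v))$ with $\|A_n\|\le\mu$ and $\alpha_n,\beta_n$ vanishing to first order at the origin. I would seek an invariant family of $C^r$ graphs $\phi_n:E^s_{x_n}(r_0)\to F_{x_n}$ with $\phi_n(0)=0$, $\Lip\phi_n\le L$, and $Z_T(\mathrm{graph}\,\phi_n)\subset\mathrm{graph}\,\phi_{n+1}$. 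Because the ratio $\|A_n\|/m(B_n)$ is strictly less than $1$ for $T$ large (using the uniform contraction of $E^s$ together with continuity to transfer the domination~\eqref{eq:domination} from $\Lambda$ to $U_0$), the graph transform is a uniform contraction on the space of such Lipschitz graphs, yielding a unique fixed-point sequence, and I set $W^s_x:=\mathrm{graph}\,\phi_0$. The $C^r$ regularity is standard in smooth graph spaces, the tangency $T_xW^s_x=E^s_x$ reduces to $D\phi_0(0)=0$ (from invariance of $E^s$ under the linearized transform), and (a)(3) is immediate from $\|A_n\|\le\mu$.

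For (b), continuous dependence of $\phi_x$ on $x$ in the $C^0$ topology follows from continuous dependence of the data of the graph transform (charts, bundles, flow), yielding the continuous map $\gamma$; the uniform bound on $\Lip\phi_x$ produces the uniform Lipschitz bound on the embeddings $\gamma(x)$. The flow invariance $Z_t(W^s_x)\subset W^s_{Z_tx}$ for $t\ge 0$ reduces to the discrete case by applying uniqueness of the fixed point at $Z_tx$ for $0\le t\le T$. For (c), the asymptotic characterization $y\in W^s_x \Longleftrightarrow d(Z_tx,Z_ty)\to 0$ exponentially forces the $W^s_x$ to be pairwise either equal or disjoint, hence a partition of $U_0$, and the continuity of $\gamma$ upgrades this partition to a topological foliation via local foliation charts transverse to $E^s$.

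The main obstacle is that any continuous complement $F$ on $U'$ is in general not $DZ_t$-invariant, so the graph transform estimate $\|DZ_T|E^s\|<m(DZ_T|F)$ does not follow directly from the partial hyperbolicity on $\Lambda$ (which controls only the pair $E^s,E^{cu}$ and only over $\Lambda$). The remedy is to take $T$ large and $U_0$ small, using continuity of $DZ_T$ together with~\eqref{eq:domination} on $\Lambda$, so that the uniform contraction of $E^s$ dominates whatever bounded behavior $F$ exhibits on $U_0$.
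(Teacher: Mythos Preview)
The paper does not prove this proposition at all; it defers entirely to \cite[Theorem~4.2 and Lemma~4.8]{AraujoM17}. Your sketch follows the standard HPS/graph-transform route, which is indeed what \cite{AraujoM17} carries out, so in broad strokes you are reconstructing the cited argument.

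Two places need more care. First, your remedy for the non-invariance of the complement $F$ is correct in spirit but imprecise: the domination~\eqref{eq:domination} controls the pair $(E^s,E^{cu})$ over $\Lambda$, not $(E^s,F)$ over $U_0$, and ``continuity of $DZ_T$'' alone does not bridge that gap since $F$ is not invariant. The clean device is an invariant cone field (cf.\ Proposition~\ref{prop:Ccu}): for $T$ large and $U_0$ small, $DZ_T$ carries a fixed center-unstable cone strictly into itself, and this cone invariance is what drives the contraction of the graph transform in place of an estimate on $m(DZ_T|F)$.

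Second, and more seriously, your argument for (c) is where the real content lies and your one sentence hides it. A continuously varying partition into $C^r$ disks is \emph{not} automatically a topological foliation; one must exhibit local product charts. The candidate chart $\chi(u,v)=\gamma(\psi(v))(u)$, with $\psi$ parametrizing a smooth transversal, is continuous by (b), but showing $\chi$ is a homeomorphism onto an open set requires work: injectivity (distinct transversal points lie on distinct leaves---not immediate from disjointness alone, since a leaf could meet the transversal twice) and openness/continuity of the inverse. In \cite{AraujoM17} this is done by writing each $W^s_y$ locally as $\mathrm{graph}\,\phi_y$ over $E^s$ in a fixed chart and arguing directly with these graph coordinates; your uniform Lipschitz bound in (b) is exactly what makes that go through. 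The sentence ``the continuity of $\gamma$ upgrades this partition to a topological foliation'' should be replaced by that argument.
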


\begin{proof}  See~\cite[Theorem~4.2 and Lemma~4.8]{AraujoM17}.
\end{proof}

The splitting $T_\Lambda M=E^s\oplus E^{cu}$ extends continuously to a 
splitting $T_{U_0} M=E^s\oplus E^{cu}$ where $E^s$ is the invariant uniformly contracting bundle in Proposition~\ref{prop:Es}.  
(In general, $E^{cu}$ is not invariant.)
Given $a>0$, we define the {\em center-unstable cone field},
\[
\cC^{cu}_x(a)=\{v= v^s+v^{cu}\in E^s_x\oplus E^{cu}_x:\|v^s\|\le a\|v^{cu}\|\}, \quad x\in U_0.
\]

\begin{prop} \label{prop:Ccu}
Let $\Lambda$ be a partially hyperbolic attracting set.
There exists $T_0>0$ such that for any $a>0$, after possibly shrinking  $U_0$,
\[
DZ_t\cdot \cC^{cu}_x(a)\subset \cC^{cu}_{Z_tx}(a) \quad\text{for all $t\ge T_0$, $x\in U_0$}.
\]
\end{prop}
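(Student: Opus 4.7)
The strategy is to establish the cone inclusion on $\Lambda$ directly from domination, extend it to a neighborhood by a continuity-plus-margin argument, and iterate using positive invariance of $U_0$.

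On $\Lambda$ the splitting $E^s\oplus E^{cu}$ is $DZ_t$-invariant, so for $v=v^s+v^{cu}\in E^s_x\oplus E^{cu}_x$ the image decomposes as $DZ_tv=DZ_tv^s+DZ_tv^{cu}\in E^s_{Z_tx}\oplus E^{cu}_{Z_tx}$. Setting $w=DZ_tv^{cu}$ and applying \eqref{eq:domination},
\[
\|DZ_tv^s\|\,\|v^{cu}\|=\|DZ_tv^s\|\,\|DZ_{-t}w\|\le C\lambda^t\,\|v^s\|\,\|DZ_tv^{cu}\|,
\]
so $DZ_t\cC^{cu}_x(a)\subset\cC^{cu}_{Z_tx}(C\lambda^ta)$ for all $x\in\Lambda$ and $t\ge0$. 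I fix $T_0$ with $C\lambda^{T_0}\le1/2$. Then for every $a>0$, every $x\in\Lambda$ and every $t\in[T_0,2T_0]$, the image $DZ_t\cC^{cu}_x(a)$ sits inside the strictly smaller cone $\cC^{cu}_{Z_tx}(a/2)$, with a uniform angular margin to $\partial\cC^{cu}_{Z_tx}(a)$.

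Off $\Lambda$, the extended $E^{cu}$ bundle is not $DZ_t$-invariant, so the pointwise computation cannot simply be repeated. However, the extended splitting on $U_0$ is continuous, so the cone field $y\mapsto\cC^{cu}_y(a)$ is continuous, and $(y,t)\mapsto DZ_t$ is jointly continuous on $U_0\times[T_0,2T_0]$. Together with the uniform margin obtained on $\Lambda$ and the compactness of $\Lambda\times[T_0,2T_0]$, this yields an open neighborhood $V\supset\Lambda$ with
\[
DZ_t\cC^{cu}_x(a)\subset\cC^{cu}_{Z_tx}(a)\qquad\text{for all }x\in V,\ t\in[T_0,2T_0].
\]
Since $\Lambda$ is an attracting set, it has arbitrarily small positively invariant open neighborhoods, so I may shrink $U_0$ to one contained in $V$.

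To reach arbitrary $t\ge T_0$, I write $t=(n-1)T_0+s$ with $n\ge1$ an integer and $s\in[T_0,2T_0)$, and factor $Z_t=Z_s\circ Z_{T_0}^{n-1}$. Positive invariance of $U_0$ keeps all intermediate iterates $Z_{kT_0}x$ inside $U_0\subset V$ for $0\le k\le n-1$, so the previous inclusion applies at each of the $n$ successive stages and chains to $DZ_t\cC^{cu}_x(a)\subset\cC^{cu}_{Z_tx}(a)$. The main obstacle is the continuity argument in the second paragraph: precisely because $E^{cu}$ fails to be $DZ_t$-invariant away from $\Lambda$, the cone inclusion at points $x\in U_0\setminus\Lambda$ cannot be read off from a pointwise estimate, and one must absorb the discrepancy between $DZ_tE^{cu}_x$ and $E^{cu}_{Z_tx}$ into the margin produced over the compact time window $[T_0,2T_0]$.
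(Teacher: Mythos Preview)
Your argument is correct and is the standard route to cone invariance for dominated splittings: establish strict cone contraction on $\Lambda$ from~\eqref{eq:domination}, propagate to a neighborhood by continuity and compactness over the finite time window $[T_0,2T_0]$, then iterate using positive invariance. The paper itself does not prove this proposition but simply cites~\cite[Proposition~3.1]{AraujoM17}, whose proof follows essentially the same outline you give here; in particular your observation that $T_0$ depends only on $C,\lambda$ (via $C\lambda^{T_0}\le1/2$) while the shrinking of $U_0$ may depend on $a$ matches the order of quantifiers in the statement.
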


\begin{proof}  See~\cite[Proposition~3.1]{AraujoM17}.
\end{proof}

\begin{prop} \label{prop:VE}
Let $\Lambda$ be a singular hyperbolic attracting set.
After possibly increasing $T_0$ and shrinking $U_0$, there exist
constants $K,\theta>0$ such that
$|\det(DZ_t| E^{cu}_x)|\geq K \, e^{\theta t}$ for all
$x\in U_0$, $t\geq 0$.
\end{prop}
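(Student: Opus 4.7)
The plan is to bootstrap the volume expansion estimate from $\Lambda$ to a neighborhood using three ingredients: (i) continuity of $DZ_t$ and of the extended bundle $E^{cu}$ on $U_0$, (ii) cone invariance from Proposition~\ref{prop:Ccu}, and (iii) a comparison estimate that controls the Jacobian of $DZ_t$ on any $2$-plane lying in the center-unstable cone in terms of the Jacobian on $E^{cu}$ itself. Let $\theta_0,K_0$ be the constants from Definition~\ref{def:VE} on $\Lambda$, and fix $\theta\in(0,\theta_0)$ (say $\theta=\theta_0/2$).

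First I would choose a single large time $T$ where the one-step estimate gives ``enough margin''. On $\Lambda$ we have $|\det(DZ_T| E^{cu}_x)|\ge K_0 e^{\theta_0 T}$; by picking $T$ large, $K_0 e^{\theta_0 T}\ge 4 e^{\theta T}$. By continuity of the map $x\mapsto |\det(DZ_T| E^{cu}_x)|$ on a compact neighborhood of $\Lambda$ (using the continuous extension of $E^{cu}$ to $U_0$), after shrinking $U_0$ we get
\[
|\det(DZ_T| E^{cu}_y)|\ge 2 e^{\theta T} \qquad\text{for all } y\in U_0.
\]
Next, using Proposition~\ref{prop:Ccu}, enlarge $T_0$ if necessary so that $T\ge T_0$ and pick $a>0$ small with $DZ_T\cdot\cC^{cu}_x(a)\subset \cC^{cu}_{Z_Tx}(a)$ on $U_0$. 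Any $2$-plane $V\subset\cC^{cu}_y(a)$ transverse to $E^s_y$ is at Grassmannian distance $O(a)$ from $E^{cu}_y$, so by uniform continuity of $(y,V)\mapsto|\det(DZ_T|V)|$ on the compact set $\overline{U_0}$ and a choice of $a$ sufficiently small,
\[
|\det(DZ_T|V)|\ge \tfrac12 |\det(DZ_T|E^{cu}_y)|\ge e^{\theta T}
\]
for every such $V$.

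I would then run the induction. For $x\in U_0$, starting from $E^{cu}_x\subset\cC^{cu}_x(a)$, cone invariance gives $DZ_{nT}(E^{cu}_x)\subset\cC^{cu}_{Z_{nT}x}(a)$ for all $n\ge 1$. The chain rule gives
\[
|\det(DZ_{nT}|E^{cu}_x)|=\prod_{k=0}^{n-1}|\det(DZ_T| DZ_{kT}E^{cu}_x)|\ge e^{n\theta T}.
\]
Finally, for arbitrary $t\ge 0$, write $t=nT+s$ with $s\in[0,T)$; by continuity, $|\det(DZ_s| W)|$ is bounded below by some $c_0>0$ uniformly over $s\in[0,T]$, $W$ a $2$-plane in $\cC^{cu}_y(a)$, $y\in\overline{U_0}$. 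Cocycling one more time yields $|\det(DZ_t|E^{cu}_x)|\ge c_0 e^{n\theta T}\ge K e^{\theta t}$ with $K=c_0 e^{-\theta T}$.

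The main obstacle, and the reason the result is not completely immediate from Definition~\ref{def:VE} plus continuity, is precisely that on $U_0$ the bundle $E^{cu}$ is \emph{not} invariant, so $DZ_t$ does not map $E^{cu}_x$ onto $E^{cu}_{Z_tx}$ and the quantity $|\det(DZ_t|E^{cu}_x)|$ fails to satisfy a multiplicative cocycle relation. Overcoming this is what forces the use of the cone field and the one-step continuity-in-$V$ comparison above; once that comparison is in hand, a fixed loss factor at each step is absorbed by making $T$ large.
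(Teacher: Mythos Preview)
Your proposal is correct and follows essentially the same approach as the paper: use cone invariance (Proposition~\ref{prop:Ccu}) together with continuity to obtain a one-step lower bound on $|\det(DZ_T|P)|$ for every $2$-plane $P$ in the cone, then iterate via the chain rule to reach all times. The paper organizes the argument slightly differently---establishing the continuity comparison uniformly for all $t\in[0,T_0]$ (so the remainder step uses the same estimate rather than a separate constant $c_0$)---but the substance is identical.
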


\begin{proof}
Let $K_0,\theta_0>0$ be the constants from Definition~\ref{def:VE}.
Fix $a>0$ and $T_0$ as in Proposition~\ref{prop:Ccu}.
We may suppose without loss that $K_0<2$ and that $K_0 e^{\theta_0 T_0}>2$.

By continuity, we may assume that for every $x\in U_0$ there exists $y\in\Lambda$ such that
\[
|\det(DZ_t| P)|\ge {\textstyle \frac12}|\det(DZ_t| E^{cu}_y)|
\ge {\textstyle \frac12}K_0 e^{\theta_0t},
\]
for all $t\in[0,T_0]$ and every $d_{cu}$-dimensional subspace
 $P\subset \cC^{cu}_x(a)$.

Write $t=mT_0+r$ where $m\in\N$, $r\in(0,T_0]$.
Since $Z_{jT_0}x\in U_0$ for all $j\ge0$
by invariance of $U_0$, and since $DZ_{jT_0}P\subset \cC^{cu}_{Z_{jT_0}x}(a)$ 
for all $j\ge0$ by Proposition~\ref{prop:Ccu},
it follows inductively that 
\[
|\det(DZ_t|P)|\ge 
({\textstyle \frac12}K_0 e^{\theta_0r})
({\textstyle \frac12}K_0 e^{\theta_0T_0})^m
\ge ({\textstyle \frac12}K_0)^{1+t/T_0}e^{\theta_0t}
=Ke^{\theta t},
\]
where $\theta=T_0^{-1}\log(\frac12 K_0e^{\theta_0 T_0})>0$
and $K>0$.
Taking $P=E^{cu}_x$ yields the desired result.
\end{proof}

\section{Global Poincar\'e  map $f:X\to X$}
\label{sec:f}

In this section, we 
suppose that $\Lambda$ is a partially hyperbolic attracting set, and recall
how
to construct a piecewise smooth Poincar\'e map $f:X\to X$ preserving a contracting stable foliation $\cW^s(X)$.
This largely follows~\cite{APPV09} (see also~\cite[Chapter~6]{AraujoPacifico})
but with slight modifications; the details enable us to establish notation required for later sections.
Mainly for notational convenience we restrict to the case $d_{cu}=2$.

\subsection{Construction of the global cross-section $X$}

Let $y\in\Lambda$ be a regular point (not an equilibrium).
There exists an open set (flow box)  $V_y\subset U_0$ containing $y$ such that the flow on $V_y$ is diffeomorphic to a linear flow.
More precisely, let $\cD$ denote the $(\dim M-1)$-dimensional unit disk
and fix $\eps_0\in(0,1)$ small.
There is a diffeomorphism $\chi:\cD\times(-\eps_0,\eps_0)\to V_y$ 
with $\chi(0,0)=y$ such that
$\chi^{-1}\circ Z_t\circ\chi(z,s)=(z,s+t)$.
Define the cross-section $\Sigma_y=\chi(\cD\times\{0\})$.

For each $x\in\Sigma_y$, let $W^s_x(\Sigma_y)=
\bigcup_{|t|<\eps_0}Z_t(W^s_x)\cap \Sigma_y$.
This defines a topological foliation $\cW^s(\Sigma_y)$ of $\Sigma_y$.

We can identify $\Sigma_y$ diffeomorphically with $(-1,1)\times\cD^{d_s}$.
The stable boundary
 $\partial^s\Sigma_y\cong \{\pm1\}\times \cD^{d_s}$ consists of two stable leaves.
Let $\cD_{1/2}^{d_s}$ denote the open disk of radius $\frac12$ in $\R^{d_s}$.
Define the {\em subcross-section}
 $\Sigma'_y\cong (-1,1)\times \cD_{1/2}^{d_s}$,
and the corresponding subflow box $V'_y\cong\Sigma'_y\times(-\eps_0,\eps_0)$ consisting of trajectories in $V_y$ that pass through $\Sigma_y'$.

For each equilibrium $\sigma\in\Lambda$, we let $V_\sigma$ be an open neighborhood of $\sigma$ on which the flow is linearizable.
Let $\gamma^s_\sigma$ and $\gamma^u_\sigma$ denote the local stable and unstable manifolds of $\sigma$
within $V_\sigma$; trajectories starting in $V_\sigma$ remain in $V_\sigma$ for all future time if and only if they lie in $\gamma^s_\sigma$.

\begin{rmk}
  Note that $W^s_\sigma$ denotes the strong stable manifold
  of $\sigma$.  In general,
  $\dim \gamma^s_\sigma\ge\dim W^s_\sigma= d_s$.  (In the
  case of a Lorenz-like singularity,
  $\dim \gamma^s_\sigma=d_s+1$.)
\end{rmk}

Define $V_0=\bigcup_\sigma V_\sigma$.  We shrink the
neighborhoods $V_\sigma$ so that (i) they are disjoint, (ii)
$\Lambda\not\subset V_0$, and (iii)
$\gamma^u_\sigma\cap\partial V_\sigma\subset V'_y$ for some
regular point $y=y(\sigma)$.

By compactness of $\Lambda$, there exists $\ell\ge1$ and
regular points $y_1,\dots,y_\ell\in\Lambda$ such that
$\Lambda\setminus V_0 \subset \bigcup_{j=1}^\ell V'_{y_j}$.
We enlarge the set $\{y_j\}$ to include the points
$y(\sigma)$ mentioned in~(iii) above.  Adjust the positions
of the cross-sections $\Sigma_{y_j}$ if necessary so that
they are disjoint, and define the global cross-section
\[
\textstyle X=\bigcup_{j=1}^\ell \Sigma_{y_j}.
\]

In the remainder of the paper, we often modify the choices
of $U_0$ and $T_0$.  However, the choices of $V_{y_j}$,
$\Sigma_{y_j}$ and $X$ remain unchanged from now on and
correspond to our current choice of $U_0$ and $T_0$.  To
avoid confusion, all subsequent choices will be labelled
$U_1\subset U_0$ and $T_1\ge T_0$.  In particular, we
suppose from now on that
$U_1\subset V_0 \cup \bigcup_{j=1}^\ell V'_{y_j}$.

\subsection{Definition of the Poincar\'e map}

By Proposition~\ref{prop:Ws}, for any $\delta>0$ we can
choose $T_1\ge T_0$ such that
\begin{align} \label{eq:delta} \diam
  Z_t(W^s_x(\Sigma_{y_j}))<\delta, \quad\text{for all}\;
  x\in \Sigma_{y_j},\,j=1,\dots,\ell,\,t>T_1.
\end{align}

Define 
\[
\textstyle \Gamma_0=\{x\in X:Z_{T_1+1}(x)\in\bigcup_\sigma\gamma^s_\sigma\}, \qquad
X'=X\setminus\Gamma_0.
\]
If $x\in X'$, then $Z_{T_1+1}(x)$ cannot remain inside $V_0$ so there exists $t>T_1+1$ 
and $j=1,\dots,\ell$ such that
$Z_tx\in V'_{y_j}$.   Since $\eps_0<1$, there exists $t>T_1$ such that $Z_tx\in\Sigma_{y_j}'$.
Hence for $x\in X'$, we can define
\[
\textstyle f(x)=Z_{\tau(x)}(x)\qquad\text{where}\qquad
\tau(x)=\inf\{t>T_1:Z_tx\in\bigcup_{j=1}^\ell \Cl\Sigma_{y_j}'\}.
\]
In this way we obtain a piecewise $C^r$ global Poincar\'e map
$f:X'\to X$ with piecewise $C^r$ roof function
$\tau:X'\to[T_1,\infty)$.

\begin{lemma} \label{lem:log}
If $\Lambda$ contains no equilibria (so $\Gamma_0=\emptyset$), then $\tau\le T_1+2$.
In general, there exists a constant $C>0$ such that 
\[
\tau(x)\le -C\log\dist(x,\Gamma_0)\quad\text{for all $x\in X'$.}
\]
\end{lemma}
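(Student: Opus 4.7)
The plan is to follow the forward orbit of $x\in X'$ starting at time $T_1+1$, using the structure $U_1\subset V_0\cup\bigcup_j V'_{y_j}$ together with the linearizability of the flow in each $V_\sigma$. First treat the case $\Gamma_0=\emptyset$, so that $U_1\subset\bigcup_j V'_{y_j}$. After possibly enlarging $T_1$ so that $Z_{T_1+1}(X)\subset U_1$ (using the attracting property of $\Lambda$ together with positive invariance), the point $p:=Z_{T_1+1}(x)$ lies in some $V'_{y_j}$. Via the identification $V'_{y_j}\cong\Sigma'_{y_j}\times(-\eps_0,\eps_0)$, one may write $p=Z_s(z)$ for some $z\in\Sigma'_{y_j}$ and $|s|<\eps_0<1$, whence $Z_{T_1+1-s}(x)=z\in\Sigma'_{y_j}$. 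Since $T_1+1-s\in(T_1,T_1+2)$, this time is a valid candidate for the infimum defining $\tau$, giving $\tau(x)\le T_1+2$.

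For the general case, fix $x\in X'$ and set $p=Z_{T_1+1}(x)\in U_1\subset V_0\cup\bigcup_j V'_{y_j}$, noting that $p\notin\bigcup_\sigma\gamma^s_\sigma$ by the definition of $X'$. If $p\in V'_{y_j}$ for some $j$, the previous paragraph already yields $\tau(x)\le T_1+2$; otherwise $p\in V_\sigma\setminus\gamma^s_\sigma$ for some equilibrium $\sigma$. The key step is to estimate the time needed to exit $V_\sigma$ in terms of $d:=\dist(p,\gamma^s_\sigma)>0$. Using the linearization of the flow in $V_\sigma$ and decomposing $p$ along the stable and unstable subspaces of $DG(\sigma)$, the unstable component has magnitude comparable to $d$ and grows at least like $e^{\lambda t}$, where $\lambda>0$ is the smallest positive real part among the eigenvalues of $DG(\sigma)$. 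Hence the orbit exits $V_\sigma$ at some time $t_1\le T_1+1+\alpha\log(1/d)+\beta$ for constants $\alpha,\beta>0$ depending on $\sigma$. By stable contraction inside $V_\sigma$ the exit point is close to $\gamma^u_\sigma\cap\partial V_\sigma$, which by condition~(iii) in the construction of $V_\sigma$ is contained in the open set $V'_{y(\sigma)}$; shrinking $V_\sigma$ if necessary, the exit point itself lies in $V'_{y(\sigma)}$. Applying the flow-box argument of the first paragraph from time $t_1$ adds at most one further unit, yielding $\tau(x)\le T_1+2+\beta+\alpha\log(1/d)$.

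To conclude, we relate $d$ to $\dist(x,\Gamma_0)$: since $Z_{T_1+1}(\Gamma_0)\subset\bigcup_\sigma\gamma^s_\sigma$ and $Z_{T_1+1}$ is Lipschitz on $\Cl X$ with some constant $L$, we have $d\le L\cdot\dist(x,\Gamma_0)$. Substituting gives $\tau(x)\le C_0-\alpha\log\dist(x,\Gamma_0)$ for a constant $C_0>0$. On the region $\{x\in X':\dist(x,\Gamma_0)\le e^{-1}\}$ we have $-\log\dist(x,\Gamma_0)\ge 1$, so $C_0$ is absorbed into the multiplicative constant; on the complementary region, $\tau$ is bounded (by continuity of $\tau$ on $X'$ and compactness) and the desired inequality follows after enlarging $C$. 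The main technical ingredient is the exit-time estimate from $V_\sigma$: although standard, it depends crucially on the linearizability of the flow there and on correctly identifying the slowest unstable eigenvalue of $DG(\sigma)$, and it is the only place where the $\log$-singularity in $\tau$ enters.
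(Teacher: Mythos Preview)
Your overall strategy mirrors the paper's: both split into the cases $Z_{T_1+1}(x)\in V'_{y_j}$ versus $Z_{T_1+1}(x)\in V_\sigma$, and in the latter case invoke linearizability near $\sigma$ to obtain a logarithmic exit-time bound.  However, your final step comparing $d=\dist(p,\gamma^s_\sigma)$ with $\dist(x,\Gamma_0)$ has the inequality pointing the wrong way.  The Lipschitz bound for $Z_{T_1+1}$ indeed gives $d\le L\,\dist(x,\Gamma_0)$, but in order to pass from
\[
\tau(x)\le T_1+2+\beta+\alpha\log(1/d)
\]
to $\tau(x)\le C_0-\alpha\log\dist(x,\Gamma_0)$ you need the \emph{reverse} inequality $d\ge c\,\dist(x,\Gamma_0)$, i.e.\ an \emph{upper} bound on $\log(1/d)$ in terms of $-\log\dist(x,\Gamma_0)$.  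From $d\le L\,\dist(x,\Gamma_0)$ one only gets $\log(1/d)\ge -\log L-\log\dist(x,\Gamma_0)$, which is useless for bounding $\tau$ from above.

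The repair requires a two-sided comparison.  The paper obtains it by using not the time-$(T_1+1)$ map but the first-entry map $x\mapsto Z_{\tau_0(x)}(x)$ into $\partial V_\sigma$: taking $\partial V_\sigma$ smooth, this is a local diffeomorphism from a neighbourhood of $\Gamma_0$ in $X$ onto a piece of $\partial V_\sigma$, carrying $\Gamma_0$ onto the trace of the local stable manifold there, so $\dist(Z_{\tau_0(x)}(x),\gamma^s_\sigma)\approx\dist(x,\Gamma_0)$ with constants on both sides.  You could also salvage your route by invoking the Lipschitz continuity of $Z_{-(T_1+1)}$ together with transversality of $Z_{T_1+1}(X)$ to $\gamma^s_\sigma$ (so that the ambient distance from $p$ to $\gamma^s_\sigma$ is comparable to the distance inside $Z_{T_1+1}(X)$ from $p$ to $Z_{T_1+1}(\Gamma_0)$), but this must be argued explicitly; as written, the ``substituting gives'' step does not follow.
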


\begin{proof}
This is a standard result so we sketch the arguments.

If $Z_{T_1+1}x\in V_{y_j}'$ for some $j$, then $Z_tx\in \Sigma_{y_j}'$
for some $t\in(T_1+1-\eps_0,T_1+1+\eps_0)$ so $\tau(x)\le T_1+2$.
Otherwise, $Z_{T_1+1}x\in V_\sigma\subset V_0$ for some equilibrium $\sigma$, and we define
\[
\tau_0(x)=\sup\{t\in[0,T_1+1]:Z_tx\not\in V_\sigma\},\qquad
\tau_1(x)=\sup\{t\ge T_1+1:Z_tx\in V_\sigma\}.
\]
Note that $Z_{\tau_1(x)}(x)\in \bigcup_jV_{y_j}'$ 
so $\tau(x)\le \tau_1(x)+1\le \tau_1(x)-\tau_0(x)+T_1+2$.

By the Hartman-Grobman Theorem, the flow in $V_\sigma$ is
homeomorphic (by a time-preserving conjugacy) to the
linearized flow $\dot x=DG(\sigma) x=(A\oplus E)x$ where
$A$ has eigenvalues with negative real part and $E$ has
eigenvalues with positive real part.  After writing $E$ in
Jordan normal form, a standard and elementary argument shows
that the ``time of flight'' of trajectories in $V_\sigma$
satisfies
$\tau_1(x)-\tau_0(x)\le
-C'\log\dist(Z_{\tau_0(x)}(x),\Gamma')$
where $\Gamma'$ denotes the local stable manifold of
$\sigma$ in the linear flow.

Finally, we can suppose without loss that
$\partial V_\sigma$ is smooth so that the initial transition
$x\mapsto Z_{\tau_0(x)}(x)$ is a diffeomorphism in a
neighborhood of $\Gamma_0$.  Hence
$\dist(Z_{\tau_0(x)}(x),\Gamma')\approx
\dist(x,\Gamma_0)$ up to uniform constants.
\end{proof}

\begin{rmk} \label{rmk:log} 
It is immediate from the proof of Lemma~\ref{lem:log} that
$\tau(x)\to\infty$ as $\dist(x,\Gamma_0)\to0$.
\end{rmk}

Define the topological foliation $\cW^s(X)=\bigcup_{j=1}^\ell \cW^s(\Sigma_{y_j})$ of $X$ with leaves $W^s_x(X)$ passing through each $x\in X$.

\begin{prop} \label{prop:invf}
For $T_1$ sufficiently large,
$f(W^s_x(X))\subset W^s_{fx}(X)$ for all $x\in X'$.
\end{prop}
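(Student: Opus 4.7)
The plan is as follows. Take $x\in X'$ and $y\in W^s_x(X)$, lying in some common cross-section $\Sigma_{y_j}$. By the definition of $\cW^s(\Sigma_{y_j})$, we may write $y=Z_s(z)$ for some $z\in W^s_x$ with $|s|<\eps_0$. Proposition~\ref{prop:Ws} gives $Z_t(z)\in W^s_{Z_tx}$ for all $t\ge 0$; in particular $w:=Z_{\tau(x)}(z)\in W^s_{fx}$. Let $k$ be the index with $fx\in\Sigma'_{y_k}$.

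The goal is to exhibit some $|r|<\eps_0$ such that $Z_r(w)\in\Sigma_{y_k}$ and $f(y)=Z_r(w)$; by the definition of the induced foliation one then has
\[
f(y)\in Z_r(W^s_{fx})\cap\Sigma_{y_k}\subset W^s_{fx}(\Sigma_{y_k})\subset W^s_{fx}(X),
\]
as required. By~\eqref{eq:delta}, for any prescribed $\delta>0$ we may take $T_1$ large enough that $d(Z_t y, Z_t x)<\delta$ for all $t>T_1$; hence $Z_{\tau(x)}(y)$ lies within $\delta$ of $fx$, and $w=Z_{-s}(Z_{\tau(x)}(y))$ lies within $O(\delta+\eps_0)$ of $fx\in\Sigma'_{y_k}$ by continuity of the flow. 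For $\delta,\eps_0$ sufficiently small, $w$ then lies in the linearizing flow box $V_{y_k}$, and the flow-box chart produces the required $r$ with $|r|<\eps_0$ and $Z_r(w)\in\Sigma_{y_k}$; continuity ensures this point still lies in $\Sigma'_{y_k}$. A direct computation using $y=Z_s(z)$ gives $Z_r(w)=Z_{\tau(x)+r-s}(y)$.

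The final step, and main technical obstacle, is to verify that $\tau(y)=\tau(x)+r-s$, i.e.\ that no earlier crossing of $y$'s orbit through $\bigcup_j\Cl\Sigma'_{y_j}$ occurs in $(T_1,\tau(x)+r-s)$. By definition of $\tau(x)$, $x$'s orbit avoids $\bigcup_j\Cl\Sigma'_{y_j}$ on $(T_1,\tau(x))$. Since $\Cl\Sigma'_{y_j}$ (stable coordinate in $\Cl\cD^{d_s}_{1/2}$) is uniformly separated from $\partial^s\Sigma_{y_j}$ by a margin of $\frac{1}{2}$ in the stable direction, for $\delta$ smaller than this margin the $\delta$-closeness of the two orbits forces each crossing of $y$ through $\Sigma_{y_j}$ to occur close to an $x$-crossing and hence outside $\Cl\Sigma'_{y_j}$; the construction of Section~\ref{sec:f}, in which $\Sigma'_{y_j}$ is taken compactly inside $\Sigma_{y_j}$, is designed precisely to supply this uniform margin. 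The strict inequality $\tau(x)+r-s>T_1$ is then secured by choosing $T_1$ sufficiently large to dominate the perturbation $|r-s|<2\eps_0$.
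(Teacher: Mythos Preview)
Your approach is the same as the paper's in spirit---both rely on \eqref{eq:delta} together with flow invariance of $\cW^s$---and you have filled in considerably more detail than the paper's two-sentence argument. Steps (1)--(4) of your plan are fine: writing $y=Z_s(z)$ with $z\in W^s_x$, using flow invariance to get $w=Z_{\tau(x)}(z)\in W^s_{fx}$, and using the flow-box chart to produce $|r|<\eps_0$ with $Z_r(w)\in\Sigma_{y_k}$ are exactly what the paper has in mind.

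The gap is in your final paragraph. Your ``margin'' argument does not rule out earlier crossings. Recall the coordinates $\Sigma_{y_j}\cong(-1,1)\times\cD^{d_s}$ and $\Sigma'_{y_j}\cong(-1,1)\times\cD^{d_s}_{1/2}$: the set $\partial^s\Sigma_{y_j}=\{\pm1\}\times\cD^{d_s}$ consists of two stable leaves at the ends of the \emph{unstable} coordinate, so the $\tfrac12$-gap you invoke is between $\Cl\cD^{d_s}_{1/2}$ and $\partial\cD^{d_s}$, which is the wrong quantity. What you would actually need is a uniform lower bound on $\dist(Z_{t_0}(x),\Cl\Sigma'_{y_j})$ for every intermediate crossing time $t_0\in(T_1,\tau(x))$, and no such bound exists: the definition of $\tau(x)$ allows $Z_{t_0}(x)$ to lie in $\Sigma_{y_j}\setminus\Cl\Sigma'_{y_j}$ \emph{arbitrarily close} to $\partial\Sigma'_{y_j}$ (stable coordinate just above $\tfrac12$), in which case a $\delta$-nearby $y$-crossing can land inside $\Cl\Sigma'_{y_j}$ and trigger $\tau(y)\approx t_0<\tau(x)$. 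The closing sentence is also circular: since $\tau(x)$ is defined as an infimum over $t>T_1$, enlarging $T_1$ does not produce any margin in $\tau(x)-T_1$, so it cannot be used to absorb $|r-s|<2\eps_0$.

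In fairness, the paper's proof is equally silent on this point: it simply asserts that ``the result follows from this by definition of $\cW^s(X)$ and flow invariance of $\cW^s$'' after noting that the forward image of the leaf lies inside the target flow box. You have correctly identified where the real work lies; the specific mechanism you propose just doesn't do it.
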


\begin{proof}
By definition of $V_{y_j}'$, it follows from~\eqref{eq:delta} that we can
choose $T_1$ large (and hence $\delta$ small) such that 
$W^s_{fx}(X)\subset V_{y_j}$ 
whenever $fx\in V'_{y_j}$.  
The result follows from this by definition of $\cW^s(X)$ and flow invariance
of $\cW^s$.
\end{proof}

Define $\partial^s X=\bigcup_{j=1}^\ell \partial^s\Sigma_{y_j}$ and let 
\[
\Gamma=\Gamma_0\cup\Gamma_1,\qquad \Gamma_1=\{x\in X':fx\in\partial^sX\}.
\]

\begin{prop} \label{prop:Gamma}
$\Gamma$ is a finite union of stable disks $W^s_x(X)$, $x\in X$.
\end{prop}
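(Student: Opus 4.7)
I treat $\Gamma_0$ and $\Gamma_1$ separately, establishing in each case that the set is saturated by leaves of $\cW^s(X)$ and has only finitely many connected components.  For $\Gamma_0$: each equilibrium $\sigma$ is Lorenz-like by Proposition~\ref{prop:like}, so $\gamma^s_\sigma$ is a smooth $(d_s+1)$-dimensional submanifold of $V_\sigma$ with $T_q\gamma^s_\sigma = E^s_q\oplus \R G(q)$ at each regular $q$ (the inclusion $E^s_q\subset T_q\gamma^s_\sigma$ follows since locally the strong stable leaf $W^s_q$ lies in the flow stable manifold of $\sigma$, whose intersection with $V_\sigma$ is $\gamma^s_\sigma$; the inclusion $\R G(q)\subset T_q\gamma^s_\sigma$ is clear; dimensions match).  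Pulling back by $Z_{-(T_1+1)}$ preserves both summands via flow invariance of $E^s$ (Proposition~\ref{prop:Es}) and of $\R G$, and since $\R G\not\subset TX$ the preimage is transverse to $X$.  The intersection $Z_{-(T_1+1)}(\gamma^s_\sigma)\cap X$ is thus a smooth $d_s$-dimensional submanifold tangent to $E^s$, locally equal to a leaf $W^s_x(X)$.  Choosing $T_1$ sufficiently large that \eqref{eq:delta} forces $Z_{T_1+1}(W^s_x(X))\subset V_\sigma$ whenever $Z_{T_1+1}(x)\in\gamma^s_\sigma$, each connected component is in fact a complete leaf.  Finiteness follows from compactness: $\overline{\gamma^s_\sigma}\subset \overline{V_\sigma}$ is compact, so $Z_{-(T_1+1)}(\overline{\gamma^s_\sigma})\cap \overline{\Sigma_{y_j}}$ is a compact smooth submanifold with finitely many connected components, and summing over finitely many $\sigma$ and $j$ yields finitely many leaves.

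For $\Gamma_1$, saturation is immediate from Proposition~\ref{prop:invf}: since $\partial^sX$ is a union of $2\ell$ stable leaves and $f(W^s_x(X))\subset W^s_{fx}(X)$, if $fx\in\partial^sX$ then $W^s_{fx}(X)$ is the unique boundary leaf through $fx$ and hence $W^s_x(X)\subset f^{-1}(\partial^sX)=\Gamma_1$.  For finiteness I would show that $\Gamma_1$ is bounded away from $\Gamma_0$.  By condition~(iii) in the choice of $V_\sigma$, the unstable manifold $\gamma^u_\sigma$ exits $V_\sigma$ into the subcross-section $V'_{y(\sigma)}$, so orbits that shadow $\gamma^s_\sigma$ for long times before escaping $V_\sigma$ first-hit $X$ inside $\Sigma'_{y(\sigma)}$, bounded away from $\partial^sX$.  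Hence there exists a neighborhood $U$ of $\Gamma_0$ with $f(U\cap X')\cap\partial^sX=\emptyset$, so $\Gamma_1\cap U=\emptyset$.  On $X\setminus U$, Remark~\ref{rmk:log} gives $\tau$ uniformly bounded, so $\Gamma_1$ is a closed submanifold inside a relatively compact subset of $X$, which therefore has only finitely many connected components.

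\emph{Main obstacle.}  The subtlest step is the finiteness for $\Gamma_1$, specifically the separation from $\Gamma_0$.  This relies crucially on condition~(iii) in the flow-box construction, which forces escapes from $V_\sigma$ along $\gamma^u_\sigma$ to land in the interior of a regular subcross-section.  Without such geometric control, preimages of boundary leaves under $f$ could accumulate on $\Gamma_0$ and produce infinitely many components of $\Gamma_1$.
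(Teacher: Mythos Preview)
Your overall strategy---treating $\Gamma_0$ and $\Gamma_1$ separately, using a transversality/dimension count for $\Gamma_0$ and condition~(iii) together with bounded $\tau$ for $\Gamma_1$---is sound and closely parallels the paper's argument. The paper phrases things as a single ``no accumulation'' claim for all of $\Gamma$, splitting into two cases according to whether the putative limit leaf lies in $\Gamma_0$ or not; your treatment of $\Gamma_1$ is essentially their Case~1 (bounded $\tau$, so the relevant leaves sit inside a compact $d_s$-dimensional submanifold of $X$), and your separation of $\Gamma_1$ from $\Gamma_0$ via condition~(iii) is exactly their Case~2. Your identification of condition~(iii) as the crucial geometric input is correct.

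There is, however, a genuine gap: you invoke Proposition~\ref{prop:like} to conclude that every equilibrium is Lorenz-like, hence $\dim\gamma^s_\sigma=d_s+1$. But Proposition~\ref{prop:like} requires $\Lambda$ to be a singular hyperbolic \emph{attractor} (in particular transitive), whereas Proposition~\ref{prop:Gamma} lives in Section~\ref{sec:f}, where $\Lambda$ is only assumed to be a partially hyperbolic attracting set---no transitivity, no volume expansion. In that generality your tangent-space identity $T_q\gamma^s_\sigma=E^s_q\oplus\R G(q)$ can fail: if two eigenvalues in $E^{cu}_\sigma$ have positive real part then $\dim\gamma^s_\sigma=d_s$ and the right-hand side is too big; if $\sigma$ is a sink then $\Gamma_0$ contains an open set and is certainly not a finite union of leaves. (To be fair, the paper's own Case~2 also asserts $\dim(\gamma^s_\sigma\cap Z_{T_1+1}(X))=d_s$, which is the same implicit restriction.) A second, smaller gap: you assert ``$\Gamma_1$ is a closed submanifold'' without justification. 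Since $f$ is discontinuous across $\Gamma_1$, this is not simply the smooth preimage of $\partial^sX$. The clean fix, which is what the paper does, is to note that once $\tau\le T_2$ on $X\setminus U$ one has $\Gamma_1\subset X\cap\bigcup_{t\in[0,T_2]}Z_{-t}(\partial^sX)$; the right-hand side is a compact $d_s$-dimensional submanifold (flow the $d_s$-dimensional $\partial^sX$ for a bounded interval and intersect transversally with $X$), hence has only finitely many components.
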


\begin{proof}
It is clear that $W^s_x(X)\subset \Gamma$ for all $x\in\Gamma$.
Also, if $x_0\not\in\Gamma$ than $fx_0=Z_{\tau(x_0)}(x_0)\in \Sigma'$ for some $\Sigma'\in\{\Sigma'_{y_j}\}$.  For $x$ close to $x_0$, it follows from continuity of the flow that
$fx\in\Sigma'$ (with $\tau(x)$ close to $\tau(x_0)$).
Hence $x\not\in\Gamma$ and so $\Gamma$ is closed.

It remains to rule out the possibility that a sequence of stable disks $W^s_{x_n}(X)$, $x_n\in\Gamma$, accumulates on $W^s_{x_0}(X)$ where $x_0=\lim_{n\to\infty}x_n$.
In showing this, it is useful to note that if $Z_tx\in V'_y$ then $Z_sx\in \Sigma'_y$ for some $s\in(t-1,t+1)$.  In particular, if $Z_tx\in V'_y$ for some $t\ge T_1+1$, then
$\tau(x)\le t+1$.

There are two cases to consider:

\noindent
{\bf Case 1:}
$Z_{T_2}x_0\in V'_y$ for some
$T_2\ge T_1+1$, $y\in\{y_1,\dots,y_\ell\}$.  In this case, restricting to large $n$ we have $Z_{T_2}x_n\in V'_y$, and hence $\tau(x_n)\le T_2+1$.
It follows that $\bigcup_n W^s_{x_n}(X)\subset X\cap\bigcup_j\bigcup_{t\in[0,T_2+1]}Z_{-t}(\partial^s\Sigma_{y_j})$.  But this is a compact submanifold of $X$ with the same dimension $d_s$ as the stable disks, so $\{x_n\}$ is finite.

\noindent{\bf Case 2:}
$Z_tx_0\in V'_\sigma$ for all $t\ge T_1+1$ for some equilibrium $\sigma$.
Note that $Z_tx_0\in\gamma_\sigma^s$ for all $t\ge T_1+1$.
As in Case 1, we can easily rule out accumulations when $\tau(x_n)\le T_1+1$ so we can suppose that $\tau(x_n)>T_1+1$.  Also, $\gamma^s_\sigma\cap Z_{T_1+1}(X)$ is a compact submanifold of dimension $d_s$, so $Z_{T_1+1}x_n\in V'_\sigma\setminus \gamma^s_\sigma$.
Hence the trajectory through $Z_{T_1+1}x_n$ eventually leaves $V'_\sigma$ close to $\gamma^u_\sigma$.  Such trajectories immediately enter the flow box $V'_{y(\sigma)}$ and hence hit $\Sigma'_{y(\sigma)}$.   In particular, $f(x_n)\in \Sigma'_{y(\sigma)}$ and $x_n\not\in\Gamma$.
\end{proof}

Let $X''=X\setminus\Gamma$.
Then $X''=S_1\cup\dots\cup S_m$ for some $m\ge1$, where each $S_i$ is homeomorphic to $(-1,1)\times \cD^{d_s}$.
We call these regions {\em smooth strips}.
Note that $f|_{S_i}:S_i\to X$ is a diffeomorphism onto its image and
$\tau|_{S_i}:S_i\to[T_1,\infty)$ is smooth for each $i$.
The foliation $\cW^s(X)$ restricts to a foliation $\cW^s(S_i)$ on each $S_i$.

\begin{rmk} \label{rmk:X} In future sections, it may be
  necessary to increase $T_1$ leading to changes to $f$,
  $\tau$, $\Gamma$ and $\{S_i\}$ (and the constant $C$ in
  Lemma~\ref{lem:log}).  However the global cross-section
  $X=\bigcup\Sigma_{y_j}$ continues to remain fixed
  throughout the paper.
\end{rmk}

\section{Uniform hyperbolicity of the Poincar\'e map}
\label{sec:UH}

Let $\Lambda$ be a singular hyperbolic attracting set.  We continue to assume
$d_{cu}=2$ for notational simplicity.
In this section, we show that for $T_1$ sufficiently large, the global Poincar\'e map
$f:X'\to X$ constructed in Section~\ref{sec:f} is uniformly hyperbolic
(with singularities).  (As noted in Remark~\ref{rmk:X}, the global cross-section $X=\bigcup\Sigma_{y_j}$ is independent of $T_1$.)

Let $S\in\{S_i\}$ be one of the smooth strips from the end of Section~\ref{sec:f}.
There exist cross-sections
$\Sigma$, $\wt\Sigma\in \{\Sigma_{y_j}\}$ such that
$S\subset\Sigma$ and $f(\Sigma)\subset\wt\Sigma$.

The splitting $T_{U_0}M=E^s\oplus E^{cu}$ induces a continuous
splitting $T\Sigma=E^s(\Sigma)\oplus E^u(\Sigma)$ defined by
\begin{align*}
E^s_x(\Sigma)=(E^s_x\oplus\R\{G(x)\})\cap T_x{\Sigma}
\quad\mbox{and}\quad
E^u_x(\Sigma)=E^{cu}_x\cap T_x{\Sigma},\quad x\in\Sigma.
\end{align*}
The analogous definitions apply to $\wt\Sigma$.

For each $y\in\wt\Sigma$, 
define the projection $\pi_y:T_yM=T_y\wt\Sigma\oplus\R\{G(y)\}\to T_y\wt\Sigma$.
Also, for $x\in\Sigma$, define the projection
$\hat\pi_x:E^s_x\oplus\R\{G(x)\}\to E_x^s$.  

By finiteness of the set of cross-sections $\{\Sigma_{y_j}\}$, there is a universal constant $C_1\ge1$ such that
\begin{align} \label{eq:proj} \nonumber
 & \|\pi_{y}v\|\le C_1\|v\|\quad\text{for all $v\in T_yM$},
\\
& 
\|\hat \pi_xv\|\le C_1\|v\|\quad\text{for all $v\in E^s_x\oplus\R\{G(x)\}$}.
\end{align}

\begin{prop}\label{prop:secUH}
(a) 
$Df\cdot E^s_x(\Sigma) = E^s_{fx}(\wt\Sigma)$ for all $x\in S$, and
$Df\cdot E^u_x(\Sigma) = E^u_{fx}(\wt\Sigma)$ for all $x\in\Lambda\cap S$.
\\
  (b) Let $\lambda_1\in(0,1)$.  
For $T_1$ sufficiently large 
  if $\inf \tau>T_1$, then for all $S\in\{S_i\}$,
\[
\|Df | E^s_x(\Sigma)\| \le  \lambda_1\quad\text{and}\quad
\|Df | E^u_x(\Sigma)\| \ge \lambda_1^{-1} \quad\text{for all $x\in S$}.
\]
\end{prop}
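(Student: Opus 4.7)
The plan is to work from the Poincar\'e derivative identity
$Df_x v = DZ_{\tau(x)}(x)v + G(fx)\,d\tau_x(v)$,
which simplifies to $Df_x v = \pi_{fx}(DZ_{\tau(x)}(x)v)$ on $T_x\Sigma$ since $\pi_{fx}$ annihilates $G(fx)$. The whole proof then reduces to analyzing how $DZ_\tau$ interacts with the splitting $E^s\oplus E^{cu}$ and with the flow direction $\R\{G\}$.

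For part~(a), I would use $DZ_t$-invariance of $E^s$ (Proposition~\ref{prop:Es}) and of $\R\{G\}$: together they give $DZ_\tau(E^s_x\oplus\R\{G(x)\}) = E^s_{fx}\oplus\R\{G(fx)\}$, so $\pi_{fx}$ lands in $(E^s_{fx}\oplus\R\{G(fx)\})\cap T_{fx}\wt\Sigma = E^s_{fx}(\wt\Sigma)$, and injectivity of $Df_x$ together with equality of dimensions yields the stable identity. For the unstable identity, invariance of $E^{cu}$ on $\Lambda$ gives $DZ_\tau E^{cu}_x = E^{cu}_{fx}$ whenever $x\in\Lambda$; since $fx\in\Lambda$ lies on a cross-section (hence is regular), $G(fx)\in E^{cu}_{fx}$, and an analogous projection argument yields $Df_xE^u_x(\Sigma) = E^u_{fx}(\wt\Sigma)$.

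For part~(b) stable, I would decompose $v\in E^s_x(\Sigma)$ as $v = \hat\pi_xv + cG(x)$ with $\hat\pi_xv\in E^s_x$; using $DZ_\tau G(x)=G(fx)$ gives $Df_xv = \pi_{fx}(DZ_\tau\hat\pi_xv)$, and combining~\eqref{eq:proj} with~\eqref{eq:contract} yields $\|Df_xv\|\le C_1^2C\lambda^\tau\|v\|$, which is at most $\lambda_1$ once $T_1$ is large. For the unstable bound the key ingredient is the two-dimensional area expansion of Proposition~\ref{prop:VE}. Take a unit $v\in E^u_x(\Sigma)$, set $P_x = \operatorname{span}\{G(x),v\}$, and write $DZ_\tau v = sG(fx)+u$ with $u = Df_xv\in T_{fx}\wt\Sigma$. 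Computing the wedge $|G(fx)\wedge DZ_\tau v|$ in two ways (using $DZ_\tau G(x)=G(fx)$ on one side, and the geometry of $u$ and $G(fx)$ on the other) gives
\[
\|Df_xv\| \;=\; \frac{|\det(DZ_\tau|P_x)|\cdot|G(x)\wedge v|}{\|G(fx)\|\sin\theta_{fx}},
\]
where $\theta_{fx}$ is the angle between $G(fx)$ and $u$. The quotient $|G(x)\wedge v|/(\|G(fx)\|\sin\theta_{fx})$ is bounded below by a uniform constant $C_2>0$ because the cross-sections avoid equilibria and are uniformly transverse to the flow. Combined with $|\det(DZ_\tau|P_x)|\ge Ke^{\theta\tau}$, this yields $\|Df_xv\|\ge C_2 Ke^{\theta\tau}\ge\lambda_1^{-1}$ for $T_1$ large.

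The main obstacle is invoking Proposition~\ref{prop:VE} on $P_x$ when $x\in S\setminus\Lambda$, since $E^{cu}$ is not $DZ_t$-invariant there and $P_x$ need not lie in $E^{cu}_x$. The way out is that the proof of Proposition~\ref{prop:VE} applies to any $2$-dimensional subspace of the forward-invariant cone $\cC^{cu}(a)$ of Proposition~\ref{prop:Ccu}, so it suffices to arrange $P_x\subset\cC^{cu}_x(a)$. Since the cross-sections sit in small flow boxes around regular points $y_j\in\Lambda$ (where $G(y_j)\in E^{cu}_{y_j}$), continuity of the splitting keeps $\|G(x)^s\|$ uniformly small on $X$; combined with the linear independence of $\{G(x)^{cu},v\}$ as a basis of $E^{cu}_x$ (a consequence of $G(x)\notin T_x\Sigma$ by transversality of $\Sigma$ to the flow), a direct cone estimate confirms $P_x\subset\cC^{cu}_x(a)$, and the rest of the unstable argument goes through verbatim.
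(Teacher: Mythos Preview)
Your proof is correct and matches the paper's approach: part~(a) and the stable half of~(b) are identical in substance, and for the unstable half of~(b) both arguments extract the expansion of $Df|E^u_x(\Sigma)$ from two-dimensional volume growth (Proposition~\ref{prop:VE}) after factoring out the bounded flow direction. The paper packages this last step as a $2\times2$ triangular-matrix computation for $DZ_{\tau(x)}$ in the bases $E^{cu}_x=E^u_x(\Sigma)\oplus\R\{G(x)\}$ and $P=(P\cap\wt\Sigma)\oplus\R\{G(fx)\}$, which is exactly your wedge identity written in coordinates; your formulation on $P_x=\operatorname{span}\{G(x),v\}$, together with the cone version of volume expansion from the proof of Proposition~\ref{prop:VE}, is slightly more careful in that it does not presuppose $G(x)\in E^{cu}_x$ for $x\in S\setminus\Lambda$.
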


\begin{proof}
(a) For $x\in S$, we have that $Df(x):T_x\Sigma\to T_{fx}\wt\Sigma$ is given by
  \begin{align} \label{eq:Df}
    Df(x)= D(Z_{\tau(x)}(x)) =
    DZ_{\tau(x)}(x)+G(fx) D\tau(x).
  \end{align}

Let $v\in E^s_x(\Sigma)\subset E^s_x+\R\{G(x)\}$.
Then using $DZ_t$-invariance of $E^s$ on $U_0$ and of the flow direction,
\[
Df(x)v\in DZ_{\tau(x)}(x)E^s_x+DZ_{\tau(x)}(x)\R\{Gx\}+\R\{G(fx)\}
\subset E^s_{fx}+\R\{G(fx)\},
\]
so $Df(x)v\in (E^s_{fx}+\R\{G(fx)\})\cap T_{fx}\wt\Sigma=E^s_{fx}(\wt\Sigma)$.

Similarly, for $x\in \Lambda\cap S$ and $v\in E^u_x(\Sigma)\subset E^{cu}_x$, using $DZ_t$-invariance of $E^{cu}$ on $\Lambda$ and the fact that the flow direction lies in $E^{cu}$,
\[
Df(x)v\in DZ_{\tau(x)}(x)E^{cu}_x+\R\{G(fx)\} \subset E^{cu}_{fx},
\]
so $Df(x)v\in E^{cu}_{fx}\cap T_{fx}\wt\Sigma=E^u_{fx}(\wt\Sigma)$.

\vspace{1ex}
\noindent (b) 
By~\eqref{eq:Df} and the definition of $\pi_y$,
\begin{align} \label{eq-Df}
Df(x)= \pi_{fx}Df(x)=\pi_{fx}DZ_{\tau(x)}(x)\quad\text{
for $x\in S$}.
\end{align}
Using the definition of $\hat\pi_x$, for $v\in E^s_x(\Sigma)\subset E^s_x\oplus \R\{G(x)\}$,
\begin{align*}
\|Df(x)v\|
=\|\pi_{fx}DZ_{\tau(x)}(x)\hat\pi_xv\|
\le C_1^2\|DZ_{\tau(x)}(x)| E^s_x\|\,\|v\|,
\end{align*}
by~\eqref{eq:proj}.
It follows that
\[
\|Df | E^s_x(\Sigma)\| \le C_1^2C\lambda^{\tau(x)}\le C_1^2C\lambda^{T_1}.
\]
where $C>0$, $\lambda\in(0,1)$ are as in~\eqref{eq:contract}.
The first estimate in (b) is immediate for $T_1$ large enough.

For the second estimate,
define $P=DZ_{\tau(x)}E^{cu}_x$ and write $DZ_{\tau(x)}(x):E^{cu}_x\to P$ in coordinates corresponding to the splittings
\[
E^{cu}_x=E^u_x(\Sigma)\oplus \R\{G(x)\}, \qquad
P=(P\cap\wt\Sigma)\oplus \R\{G(fx)\}.
\]
In these coordinates, it follows from invariance and neutrality of the flow direction that
\[
DZ_{\tau(x)}(x)= \left(\begin{array}{cc} a_{11}(x) & 0 \\ a_{21}(x) & a_{22}(x) \end{array}\right),
\]
where $\sup_x |a_{22}(x)|\le C_2$ for some
constant $C_2>0$.
Moreover, 
by~\eqref{eq-Df},
\[
a_{11}(x)=\pi_{fx}DZ_{\tau(x)}(x)|_{E^u_x(\Sigma)}=Df(x)|_{E^u_x(\Sigma)}.
\]
Hence by Proposition~\ref{prop:VE},
\begin{align*}
|Df(x)| E^u_x(\Sigma)|  =
|a_{11}(x)|  & \ge C_2^{-1}|\det DZ_{\tau(x)}(x)| E^{cu}_x|
\\ & \ge 
C_2^{-1}K e^{\theta \tau(x)}
 \ge C_2^{-1}K e^{\theta T_1}
\ge \lambda_1^{-1},
\end{align*}
for $T_1$ sufficiently large.
\end{proof}

Next, for $a>0$ we define the {\em unstable cone field} 
\begin{align*}
\cC^u_x(\Sigma,a)=\{w=w^s+w^u\in E^s_x(\Sigma)\oplus E^u_x(\Sigma):
 \|w^s\| \le a \|w^u\| \}, \quad x\in \Sigma.
\end{align*}

\begin{prop} \label{prop:sec-cone}
For any $a>0$, $\lambda_1\in(0,1)$, we can increase $T_1$ and shrink $U_1$ such that if $\inf\tau>T_1$ then for all $S\in\{S_i\}$
\\
(a)
$Df(x)\cdot \cC^u_x(\Sigma,a) \subset \cC^u_{fx}(\Sigma,a)$ 
for all $x\in S$.
\\
(b)
  $\| Df(x)w\| \ge \lambda_1^{-1} \|w\|$
  for all $x\in S$, $w\in \cC^u_x(\Sigma,a)$.
\end{prop}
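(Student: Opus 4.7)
The strategy is to transfer the invariance of the ambient center-unstable cone field from Proposition~\ref{prop:Ccu} to the cross-section via the projection $\pi_{fx}$, using the identity $Df(x)=\pi_{fx}DZ_{\tau(x)}$ from~\eqref{eq-Df}.

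For~(a), split $w=w^s+w^u\in\cC^u_x(\Sigma,a)$ and treat the two summands separately.  The stable piece is handled directly by Proposition~\ref{prop:secUH}: $Df(x)w^s\in E^s_{fx}(\wt\Sigma)$ with $\|Df(x)w^s\|\le\lambda_2\|w^s\|$, where $\lambda_2>0$ is a constant that can be made as small as needed by enlarging $T_1$ (apply Proposition~\ref{prop:secUH}(b) with $\lambda_2$ in place of $\lambda_1$).  For the unstable piece, since $w^u\in E^u_x(\Sigma)\subset E^{cu}_x$, we automatically have $w^u\in\cC^{cu}_x(a^*)$ for every $a^*>0$.  Fix $a^*>0$ small (to be chosen) and shrink $U_1$ as in Proposition~\ref{prop:Ccu}; then $DZ_{\tau(x)}w^u=v^s+v^{cu}\in\cC^{cu}_{fx}(a^*)$, i.e.\ $\|v^s\|\le a^*\|v^{cu}\|$ with $v^s\in E^s_{fx}$, $v^{cu}\in E^{cu}_{fx}$.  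Applying $\pi_{fx}$ and using $G(fx)\in E^{cu}_{fx}$, one has $\pi_{fx}v^s\in E^s_{fx}(\wt\Sigma)$ and $\pi_{fx}v^{cu}\in E^u_{fx}(\wt\Sigma)$, so
\[
(Df(x)w)^s=Df(x)w^s+\pi_{fx}v^s,\qquad (Df(x)w)^u=\pi_{fx}v^{cu}.
\]

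By~\eqref{eq:proj}, $\|\pi_{fx}v^s\|\le C_1 a^*\|v^{cu}\|$, and a uniform positive lower bound $c>0$ on the angle between $E^{cu}$ and $\R\{G\}$ (available from compactness of the cross-sections, continuity of $E^{cu}$ on $U_0$, and the absence of equilibria on $X$) gives $\|\pi_{fx}v^{cu}\|\ge c\|v^{cu}\|$.  Combining these yields $\|(Df(x)w)^s\|\le \lambda_2\|w^s\|+(C_1 a^*/c)\|(Df(x)w)^u\|$.  Meanwhile, Proposition~\ref{prop:secUH}(b) applied to $w^u$, together with $\|Df(x)w^u\|\le C_1\|DZ_{\tau(x)}w^u\|\le C_1(1+a^*)\|v^{cu}\|$, produces a matching lower bound $\|(Df(x)w)^u\|\ge c'\lambda_2^{-1}\|w^u\|$ with $c'>0$ depending only on $a^*,c,C_1$.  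Using $\|w^s\|\le a\|w^u\|$, one gets $\|(Df(x)w)^s\|\le [\lambda_2^2 a/c'+C_1 a^*/c]\|(Df(x)w)^u\|$, which is bounded by $a\|(Df(x)w)^u\|$ once $a^*$ is small and $T_1$ is large, establishing~(a).  Statement~(b) then follows from $\|Df(x)w\|\ge C_3^{-1}\|(Df(x)w)^u\|$ (uniform angle lower bound in $T\wt\Sigma$), the above lower bound on $\|(Df(x)w)^u\|$, and $\|w\|\le(1+a)\|w^u\|$ on the cone, by taking $\lambda_2\le \lambda_1 c'(1+a)^{-1}C_3^{-1}$.

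The main technical subtlety is that $E^{cu}$ is not $DZ_t$-invariant on $U_0\setminus\Lambda$, so Proposition~\ref{prop:secUH}(a) cannot be invoked directly to say $Df(x)w^u\in E^u_{fx}(\wt\Sigma)$ for $x\notin\Lambda$.  The remedy is to work entirely with the forward-invariant ambient cone field $\cC^{cu}(a^*)$ from Proposition~\ref{prop:Ccu} and only at the end project to the section via $\pi_{fx}$, using uniform transversality of the flow direction to the cross-sections to convert the ambient cone bound into the desired section cone bound.
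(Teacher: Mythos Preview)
Your argument has a genuine gap at the lower bound $\|\pi_{fx}v^{cu}\|\ge c\|v^{cu}\|$, which you justify via ``a uniform positive lower bound on the angle between $E^{cu}$ and $\R\{G\}$''.  This is false: the flow direction $G(fx)$ \emph{lies in} $E^{cu}_{fx}$ (you use this yourself when asserting $\pi_{fx}v^{cu}\in E^u_{fx}(\wt\Sigma)$), so that angle is zero and $\pi_{fx}$ annihilates $G(fx)\in E^{cu}_{fx}$.  Since $v^{cu}$ is the $E^{cu}$-component of $DZ_{\tau(x)}w^u$, a vector \emph{not} constrained to lie in $T_{fx}\wt\Sigma$, nothing prevents $v^{cu}$ from being nearly parallel to $G(fx)$.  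Both your cone estimate $\|(Df(x)w)^s\|\le(C_1a^*/c)\|(Df(x)w)^u\|$ and your expansion bound $\|(Df(x)w)^u\|\ge c'\lambda_2^{-1}\|w^u\|$ rest on this false inequality.  The repair is to decompose one step later: the two-dimensional plane $P=DZ_{\tau(x)}E^{cu}_x$ contains $G(fx)$ and lies in $\cC^{cu}_{fx}(a^*)$, so $Df(x)w^u\in P\cap T_{fx}\wt\Sigma\subset\cC^{cu}_{fx}(a^*)$.  Writing $Df(x)w^u=u^s+u^{cu}$ in $E^s_{fx}\oplus E^{cu}_{fx}$ with $\|u^s\|\le a^*\|u^{cu}\|$, the constraint $u^s+u^{cu}\in T_{fx}\wt\Sigma$ forces the $\R\{G(fx)\}$-components of $u^s$ and $u^{cu}$ to cancel, and uniform transversality of $G$ to the cross-sections bounds that common flow component by $C'\|u^s\|\le C'a^*\|u^{cu}\|$.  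This yields $\|\pi_{fx}u^{cu}\|\ge(1-C'a^*)\|u^{cu}\|$, which is exactly what you need since $\pi_{fx}u^{cu}=\pi_{fx}v^{cu}=(Df(x)w)^u$; the rest of your argument then goes through.

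For comparison, the paper's proof is far shorter: it simply invokes Proposition~\ref{prop:secUH}(b) (with $\lambda_1=1$) to write $\|Df(x)w^s\|\le\|w^s\|\le a\|w^u\|\le a\|Df(x)w^u\|$ and declares~(a) proved, and then~(b) follows from a triangle-inequality estimate $\|Df(x)w\|\ge(1-a)(1+a)^{-1}\lambda_1^{-1}\|w\|$.  This tacitly treats $Df(x)w^u$ as the $E^u_{fx}(\wt\Sigma)$-component of $Df(x)w$, which Proposition~\ref{prop:secUH}(a) only guarantees for $x\in\Lambda\cap S$; you are right to be more careful off $\Lambda$, and your route via Proposition~\ref{prop:Ccu} is the natural way to close that gap once the lower-bound step above is fixed.
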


\begin{proof}
Let $w=w^s+w^u\in\cC^u_x(\Sigma,a)$.
The estimates in Proposition~\ref{prop:secUH}(b) hold with $\lambda_1=1$, so
\[
\|Df(x)w^s\|\le \|w^s\|\le a\|w^u\|\le a\|Df(x)w^u\|,
\]
proving (a).
\\
(b) 
Let $\lambda_1\in(0,1)$ be the constant in Proposition~\ref{prop:secUH}(b).
For $w\in\cC^u_x(\Sigma,a)$,
\[
\|Df(x)w\|\ge 
(1-a)\lambda_1^{-1}\|w^u\|
\ge (1-a)(1+a)^{-1}\lambda_1^{-1}\|w\|
\]
Since $\lambda_1$ is arbitrarily small,
the result follows with a new value of $\lambda_1$.
\end{proof}

Taking unions over smooth strips $S$ and cross-sections $\Sigma$, we obtain a global 
  continuous uniformly hyperbolic splitting
  \[
TX''=E^s(X)\oplus E^u(X),
\]
with the following properties:

\begin{thm} \label{thm:global}
  The stable bundle $E^s(X)$ and the restricted splitting
  $T_\Lambda X''=E^s_\Lambda(X)\oplus E^u_\Lambda(X)$ are
$Df$-invariant.  

Moreover,
for fixed $a>0$, $\lambda_1\in(0,1)$, we can arrange that
\[
Df\cdot \cC^u_x(X,a)\subset \cC^u_{fx}(X,a)\quad\text{and}\quad
\|Df(x)w\|\ge \lambda_1^{-1}\|w\|
\]
 for all $x\in X''$, $w\in\cC^u_x(X,a)$.
 \qed
\end{thm}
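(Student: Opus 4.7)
The plan is essentially an assembly argument: the hard analytic content has already been established in Propositions~\ref{prop:secUH} and~\ref{prop:sec-cone} for a single smooth strip $S$ together with its source cross-section $\Sigma$ and image cross-section $\wt\Sigma$, and Theorem~\ref{thm:global} amounts to patching those local statements into a global one using the finiteness of the collections $\{\Sigma_{y_j}\}$ and $\{S_i\}$.

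First I would define the global splitting. On each cross-section $\Sigma_{y_j}$, the continuous splitting $T\Sigma_{y_j}=E^s(\Sigma_{y_j})\oplus E^u(\Sigma_{y_j})$ was constructed at the start of Section~\ref{sec:UH} from $E^s\oplus E^{cu}$ on $U_0$ via intersection with $T\Sigma_{y_j}$ and the sum with the flow direction. Since $X=\bigcup_{j=1}^\ell\Sigma_{y_j}$ is a disjoint union, these piece together to give a continuous splitting $TX''=E^s(X)\oplus E^u(X)$ on the open set $X''\subset X$, and the cone field $\cC^u_x(X,a)$ is defined strip-by-strip by the same formula as $\cC^u_x(\Sigma,a)$. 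Invariance of $E^s(X)$ under $Df$ on all of $X''$ is then immediate from the first statement in Proposition~\ref{prop:secUH}(a) applied on each smooth strip; invariance of the restricted splitting on $\Lambda$ comes from the second statement in Proposition~\ref{prop:secUH}(a), since $f$ preserves $\Lambda$.

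For the uniform quantitative bounds, I would fix the desired $a>0$ and $\lambda_1\in(0,1)$ in advance. Since there are only finitely many smooth strips $S_1,\dots,S_m$, each with its associated pair $\Sigma\subset X$, $\wt\Sigma\subset X$ of cross-sections, I can apply Proposition~\ref{prop:sec-cone} (which already absorbs Proposition~\ref{prop:secUH}(b)) to each strip separately: this produces, for each $S_i$, a threshold $T_1^{(i)}$ and a neighborhood $U_1^{(i)}\subset U_0$ so that the cone invariance $Df\cdot\cC^u_x(\Sigma,a)\subset\cC^u_{fx}(\wt\Sigma,a)$ and the expansion $\|Df(x)w\|\ge\lambda_1^{-1}\|w\|$ hold on $S_i$ whenever $\inf\tau>T_1^{(i)}$. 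Taking $T_1=\max_i T_1^{(i)}$ and $U_1=\bigcap_i U_1^{(i)}$ (and enforcing as before that $U_1\subset V_0\cup\bigcup_j V'_{y_j}$) gives the uniform statement on all of $X''=\bigcup_i S_i$. Note that increasing $T_1$ changes $\Gamma$ and the decomposition $X''=S_1\cup\cdots\cup S_m$, as flagged in Remark~\ref{rmk:X}, but $X$ itself is unchanged, so the global splitting and cone field remain well-defined.

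The only mild subtlety — not really an obstacle — is making sure the choice of $T_1$ is compatible simultaneously with: (i) Proposition~\ref{prop:invf} (so $\cW^s(X)$ is preserved), (ii) Proposition~\ref{prop:secUH}(b) for contraction on $E^s(\Sigma)$ and expansion on $E^u(\Sigma)$, and (iii) Proposition~\ref{prop:sec-cone} for cone invariance. Each of these is a ``$T_1$ sufficiently large'' condition on a finite list of strips, so a single common choice suffices, and the theorem follows by taking unions over $i$.
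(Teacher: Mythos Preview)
Your proposal is correct and matches the paper's approach: the paper has no explicit proof for this theorem (it is marked with \qed), treating it as an immediate assembly of Propositions~\ref{prop:secUH} and~\ref{prop:sec-cone} via the sentence ``Taking unions over smooth strips $S$ and cross-sections $\Sigma$, we obtain a global continuous uniformly hyperbolic splitting\ldots''. One small remark: Propositions~\ref{prop:secUH}(b) and~\ref{prop:sec-cone} are already stated uniformly ``for all $S\in\{S_i\}$'' with a single $T_1$ (the constants $C_1,C_2,C,K,\lambda,\theta$ in their proofs depend only on the finite collection $\{\Sigma_{y_j}\}$ and the flow, not on the strip), so your per-strip thresholds $T_1^{(i)}$ and the subsequent max are unnecessary---this also sidesteps the circularity you flag about the strip decomposition depending on $T_1$.
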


\section{The stable lamination is a topological foliation}
\label{sec:fol}

The stable manifold theorem guarantees the existence of an $Z_t$-invariant stable lamination
consisting of smoothly embedded disks $W^s_x$ through each point $x\in\Lambda$.   For general partially hyperbolic attracting sets, there is no guarantee that $\{W^s_x:x\in\Lambda\}$ defines a topological foliation in an open neighborhood of $\Lambda$.
However, in this section we show that this is indeed the case under our assumptions that $\Lambda$ is singular hyperbolic with $d_{cu}=2$:

\begin{thm} \label{thm:fol}
Let $\Lambda$ be a singular hyperbolic attracting set with $d_{cu}=2$.
Then the stable lamination $\{W^s_x:x\in\Lambda\}$ is a topological foliation of an open neighborhood of~$\Lambda$.
\end{thm}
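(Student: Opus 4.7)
The plan is to show that the leaves $\{W^s_x : x \in \Lambda\}$, viewed inside the ambient topological foliation of $U_0$ provided by Proposition~\ref{prop:Ws}, cover a full neighborhood of $\Lambda$. For $x \in \Lambda$, the invariant stable disk $W^s_x$ coincides locally with the extended leaf through $x$ by uniqueness of local stable manifolds, so it suffices to prove that every leaf of the extended foliation passing through a sufficiently small neighborhood $U_1 \subset U_0$ of $\Lambda$ intersects $\Lambda$. Equivalently, writing $\pi \colon U_0 \to U_0/\cW^s$ for the leaf-space projection, which is locally two-dimensional since $d_{cu}=2$, one must show that $\pi(\Lambda)$ contains an open neighborhood of itself inside $\pi(U_1)$.

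The core argument I would run is at regular points. For $x \in \Lambda$ not an equilibrium, the strong unstable manifold $W^u_x$ is a $C^r$ curve of dimension one (since $E^{cu}_x$ is two-dimensional and contains the flow direction). A standard attracting-set argument yields $W^u_x \subset \Lambda$: backward orbits of points in $W^u_x$ shadow the backward orbit of $x$ and therefore remain in the trapping region $U$, so they lie in $\bigcap_{t > 0} Z_t(U) = \Lambda$. The flow saturation $\bigcup_{t \in \R} Z_t(W^u_x) \subset \Lambda$ is then a two-dimensional surface tangent to $E^{cu}_x$ at $x$; since $E^s_x$ is complementary to $E^{cu}_x$, its $\pi$-image contains an open neighborhood of $\pi(x)$. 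Consequently, $\pi(\Lambda)$ has nonempty interior around every regular point, and by compactness this already takes care of a neighborhood of $\Lambda \setminus \mathrm{Eq}(\Lambda)$.

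The remaining difficulty, and the main obstacle, lies at the equilibria of $\Lambda$; this is where the codimension-two hypothesis is indispensable and where the delicate auxiliary result alluded to in the introduction must be invoked. Each equilibrium $\sigma \in \Lambda$ is Lorenz-like by Proposition~\ref{prop:like}, with a single one-dimensional unstable branch $\gamma^u_\sigma \subset \Lambda$, so the saturation argument of the previous paragraph drops a dimension at $\sigma$ itself. My plan is to argue, inside a linearized flow box around $\sigma$, that any leaf of $\cW^s$ passing close to $\sigma$ but failing to meet $\Lambda$ locally would be transported by the forward flow until it crosses a Poincar\'e section near $\gamma^u_\sigma \setminus \{\sigma\}$, where the regular-point analysis applies; the codimension-two constraint is what forces the transverse complement of the local stable manifold $\gamma^s_\sigma$ to be one-dimensional, ruling out leaves that could escape into regions disjoint from $\Lambda$. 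Combining both cases, $\pi(\Lambda)$ is open in $\pi(U_1)$ for some neighborhood $U_1 \supset \Lambda$, so every leaf of $\cW^s$ through $U_1$ meets $\Lambda$, proving that $\{W^s_x : x \in \Lambda\}$ topologically foliates $U_1$.
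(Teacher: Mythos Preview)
Your argument has a genuine gap at the very first step: you take for granted that each regular point $x\in\Lambda$ carries a one-dimensional strong unstable manifold $W^u_x$.  The singular hyperbolic splitting is only $E^s\oplus E^{cu}$; nothing in Definitions~\ref{def:PH}--\ref{def:singularset} supplies a $DZ_t$-invariant line $E^{uu}\subset E^{cu}$ complementary to the flow direction, so the classical invariant manifold theorem does not hand you $W^u_x$.  One might hope to get these manifolds from Pesin theory, but (as the Introduction stresses) the whole point of Theorem~\ref{thm:fol} is to supply the foliation structure \emph{before} one can legitimately invoke Pesin-type results on~$\Lambda$; using unstable manifolds here is circular.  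Alternatively, one could try to extract unstable manifolds from the uniformly hyperbolic Poincar\'e map of Section~\ref{sec:UH}, but that map has singularities along $\Gamma$, so leaf sizes are governed by the backward recurrence to $\Gamma$ and are not uniform.  Your subsequent ``compactness'' step then also fails: $\Lambda\setminus\mathrm{Eq}(\Lambda)$ is not compact, and without a uniform lower bound on leaf size the local openness of $\pi(\Lambda)$ at each regular point does not upgrade to coverage of a neighborhood.

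The paper's route is engineered precisely to avoid unstable manifolds.  It works with the $Df$-invariant unstable \emph{cone field} of Theorem~\ref{thm:global} and arbitrary $u$-curves (smooth curves in the cone, not invariant manifolds).  Proposition~\ref{prop:cross} --- the step that genuinely uses $d_{cu}=2$ --- shows that any $u$-curve grows under iteration until it crosses an entire smooth strip, because the strip boundaries consist of finitely many stable leaves and a sufficiently long one-dimensional curve must meet two of them.  From this one manufactures periodic points by an intermediate-value argument (Proposition~\ref{prop:denseper}), shows that backward images of their stable leaves are dense in $X$, and concludes (Proposition~\ref{prop:fol}) that every $x\in X$ lies on $W^s_y(X)$ for some $y\in\Lambda\cap X$; flowing gives the theorem.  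If you want to rescue your outline, the missing ingredient is exactly this cone-field/$u$-curve growth mechanism in place of the nonexistent $W^u_x$.
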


The method of proof is to show that $\{W^s_x:x\in\Lambda\}$ coincides with the topological foliation
$\{W_x^s:x\in U_0\}$ in Proposition~\ref{prop:Ws}(c).
In particular, we have {\em a posteriori} that $\Lambda\subset\Int \bigcup_{x\in\Lambda}W^s_x$.  The proof shows that for every $x$ in an open neighbourhood of $\Lambda$, there exists $z\in\Lambda$ such that $x\in W^s_z$ (and hence $W^s_x=W^s_z$).

Fix $a>0$ as in Theorem~\ref{thm:global}.
A smooth curve $\gamma:[0,1]\to \Sigma\subset X$ is called a {\em $u$-curve} if
$D\gamma(t)\in \cC^u_{\gamma(t)}(\Sigma,a)$ for all $t\in[0,1]$.
  We say that a $u$-curve $\gamma$ contained in $X$
  \emph{crosses} a smooth strip $S$ if each stable leaf
  $W^s_x(S)$ intersects $\gamma$ in a unique point.

  \begin{prop}\label{prop:cross}
    For every $u$-curve $\gamma_0$ there exists $n\ge1$ and
    a restriction $\hat\gamma\subset\gamma_0$ so that
    $f^n|_{\hat\gamma}: \hat\gamma\to f^n\hat\gamma$ is a
    diffeomorphism and $f^n\hat\gamma$ crosses $S_j$
    for some $j$.
  \end{prop}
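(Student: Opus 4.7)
The strategy is to iterate $f$ on $\gamma_0$ and apply a length-counting argument showing that some sub-arc of an iterate must stretch across an entire smooth strip.

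\textbf{Step 1 (Graph and splitting bounds).} By the cone condition $a<1$, any $u$-curve in a cross-section is monotonic over the one-dimensional leaf space of $\cW^s(X)$ (the one-dimensionality uses the hypothesis $d_{cu}=2$) and so meets each stable leaf in at most one point. Writing $N$ for the total number of stable disks in $\Gamma$ (Proposition~\ref{prop:Gamma}), any $u$-curve $\beta\subset X$ satisfies $|\beta\cap\Gamma|\le N$, so $\beta\setminus\Gamma$ decomposes into at most $N+1$ open sub-arcs, each lying in a single strip $S_i$. Moreover, the graph property combined with bounded strip geometry yields a uniform constant $L_{\max}<\infty$ bounding the arclength of any $u$-curve contained in a single strip.

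\textbf{Step 2 (Iterative decomposition).} For each $n\ge 1$, consider
\[
\Omega_n=\gamma_0\setminus\bigcup_{k=0}^{n}f^{-k}(\Gamma),
\]
an open subset of $\gamma_0$ whose connected components $A_{n,1},\dots,A_{n,m_n}$ satisfy $m_n\le(N+1)^{n+1}$ (by Step~1 and induction: each $A_{n-1,k}$ is cut by $f^{-n}(\Gamma)$ into at most $N+1$ pieces, since $f^n(A_{n-1,k})$ is a $u$-curve meeting $\Gamma$ in at most $N$ points). On each $A_{n,k}$ the map $f^n$ is a diffeomorphism and $f^n(A_{n,k})$ is a $u$-curve contained in a single strip. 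By Theorem~\ref{thm:global},
\[
\sum_{k=1}^{m_n}|f^n(A_{n,k})|\ge\lambda_1^{-n}\sum_{k=1}^{m_n}|A_{n,k}|=\lambda_1^{-n}|\gamma_0|.
\]

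\textbf{Step 3 (Counting forces crossing).} Choose $T_1$ large enough that $\lambda_1(N+1)<1$; this is possible because Proposition~\ref{prop:secUH}(b) makes $\lambda_1^{-1}$ grow exponentially in $T_1$, while $N$ grows at most polynomially in $T_1$ (it counts essentially the stable manifolds of equilibria hit by orbits of length $\le T_1+1$). Suppose, for contradiction, that for every $n\ge1$ and every $k$ the sub-arc $f^n(A_{n,k})$ fails to cross its strip, so that $|f^n(A_{n,k})|\le L_{\max}$. Then
\[
\lambda_1^{-n}|\gamma_0|\le m_n L_{\max}\le (N+1)^{n+1}L_{\max},
\]
i.e.\ $(\lambda_1(N+1))^n\ge|\gamma_0|/((N+1)L_{\max})$. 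The left-hand side tends to $0$ as $n\to\infty$, while the right-hand side is a fixed positive constant, a contradiction. Hence some $f^n(A_{n,k})$ crosses a strip $S_j$, and $\hat\gamma=A_{n,k}$ is the desired restriction.

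\textbf{Main obstacle.} The crucial delicacy is arranging $\lambda_1(N+1)<1$: the expansion rate (exponential in $T_1$) must dominate the combinatorial growth of the singularity count (polynomial in $T_1$). The codimension-two hypothesis $d_{cu}=2$ is indispensable in Step~1, since the one-dimensionality of the stable leaf space is what makes $u$-curves graphical over it, underwriting both the $N+1$ splitting bound and the uniform bound $L_{\max}$ on arclengths within strips.
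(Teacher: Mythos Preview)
Your counting argument has a genuine gap in Step~3. The claim that $N$ grows at most polynomially in $T_1$ is incorrect in general. The set $\Gamma$ includes $\Gamma_1=\{x:fx\in\partial^sX\}$, and since increasing $T_1$ amounts (roughly) to replacing the basic first-return map $R$ by a higher iterate $R^k$ with $k\sim T_1/\bar\tau$, the number of singularity leaves of $f$ behaves like the number of branches of $\bar R^k$, which typically grows \emph{exponentially} in $k$ (and hence in $T_1$). For the Lorenz map, for instance, $\bar R$ has two branches and the preimages of the singularity grow like $2^k$. So both $\lambda_1^{-1}$ and $N+1$ grow exponentially in $T_1$, and whether $\lambda_1(N+1)<1$ can be arranged becomes a delicate comparison between the volume-expansion rate $\theta$ and the topological entropy of the return map --- a nontrivial issue that you have not addressed, and that need not go your way.

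The paper's proof sidesteps this difficulty entirely. Rather than tracking all $(N+1)^{n+1}$ pieces, it observes that at each step the image $f\gamma$ either already meets two boundary leaves (in which case a sub-arc of $f\gamma$ crosses a strip and we stop), or meets at most one boundary leaf. In the latter case $f\gamma$ splits into at most \emph{two} pieces, and the larger one has length $\ge\frac12|f\gamma|\ge 2|\gamma|$ provided $\lambda_1\le\frac14$. Iterating, lengths double until a crossing is forced. The crucial point is that this argument needs only $\lambda_1\le\frac14$, a condition on the expansion rate alone, completely independent of $N$. Your tracking-all-pieces strategy cannot be repaired without such a trick, because controlling $N$ against $\lambda_1$ is genuinely the wrong fight.
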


  \begin{proof}
We choose $\lambda_1\in(0,\frac14]$.
Let $S\in\{S_1,\dots,S_m\}$ and let
$\gamma$ be a $u$-curve in $S$ with length $|\gamma|$.
We consider three possibilities:
\begin{itemize}
\item[(i)] $f\gamma\subset S_i$ for some $i$.
In this case $|f\gamma|\ge 4|\gamma|$ by Theorem~\ref{thm:global}.
\item[(ii)] $f\gamma$ intersects $\bigcup \partial S_i$ in precisely one point $q$.  In this case at least one of the connected components of
$f\gamma\setminus \{q\}$ has length at least $2|\gamma|$.
\item[(iii)] $f\gamma$ intersects $\bigcup \partial S_i$ in at least two points. 
\end{itemize}
In case~(iii), we are finished with $n=1$.
In the other cases, we can pass to a restriction $\tilde\gamma$ such that
$\tilde\gamma$ and $f\tilde\gamma$ lie in smooth strips with
$|f\tilde\gamma|\ge 2|\gamma|$.

    By Theorem~\ref{thm:global},
$f\tilde\gamma$ is a $u$-curve so we can repeat the procedure.
After one such repetition,
either the process has terminated with $n=2$ or 
there is a restriction $\tilde\gamma$ such that
$\tilde\gamma$ and $f^2\tilde\gamma$ lie in smooth strips with
$|f^2\tilde\gamma|\ge 4|\gamma|$.
Since $X$ is bounded, the process terminates in finitely many steps.
  \end{proof}

\begin{prop}\label{prop:denseper}
  There exists a finite set $\{p_1,\dots,p_k\}\subset X\cap\Lambda$ 
  such that each $p_i$ is a periodic point for $f$ and
$\bigcup_{n\ge0}f^{-n}\big(\bigcup_{i=1}^k W^s_{p_i}(X)\big)$
is dense in $X$.
\end{prop}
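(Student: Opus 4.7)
The plan is to combine the combinatorics of smooth strips with the uniform hyperbolicity of $f$: I will extract periodic orbits from cycles in a directed graph on $\{S_1,\dots,S_m\}$, and then deduce density of preimages of their stable leaves by iterating Proposition~\ref{prop:cross}. First, I would strengthen Proposition~\ref{prop:cross}: because $\Gamma$ is a union of stable leaves (Proposition~\ref{prop:Gamma}) and $f$ contracts the stable direction, the restriction $\hat\gamma\subset S_i$ whose iterate $f^n(\hat\gamma)$ crosses some $S_j$ can be thickened along the stable foliation to a rectangular neighborhood $R\subset S_i$ on which $f^n$ is smooth, with $f^n(R)$ a thin $u$-strip crossing $S_j$ from one stable boundary to the other. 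I then introduce a directed graph $\mathcal{G}$ on vertices $\{S_1,\dots,S_m\}$ with an edge $S_i\to S_j$ whenever such a rectangle transition exists; Proposition~\ref{prop:cross} then says every vertex of $\mathcal{G}$ has an outgoing edge, and finiteness of $\mathcal{G}$ yields finitely many terminal strongly connected components $C_1,\dots,C_K$ into which every vertex has a directed path.

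The central step is producing a periodic point of $f$ inside each strip $S_v$ with $v\in C_l$, $l=1,\dots,K$. Because $v$ lies on a closed walk inside the terminal SCC $C_l$, I would compose the associated rectangle transitions to obtain an iterate $N\ge 1$ and a sub-rectangle $R\subset S_v$ on which $f^N$ is smooth, with $f^N(R)$ a full $u$-strip crossing $S_v$. By Theorem~\ref{thm:global}, $f^N$ is uniformly hyperbolic on $R$, expanding the unstable cone by at least $\lambda_1^{-N}$ and contracting the stable direction correspondingly, so the standard graph-transform / hyperbolic fixed-point argument yields a unique fixed point $p\in R$ of $f^N$, hence a periodic point of $f$ in $S_v$. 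The flow orbit through $p$ is periodic, forward invariant and contained in the positively invariant neighborhood $U_0$ (Proposition~\ref{prop:Ws}), so it lies in $\bigcap_{t\ge0}Z_t(U_0)=\Lambda$. Collecting one such point for every vertex of every terminal SCC produces the finite set $\{p_1,\dots,p_k\}\subset X\cap\Lambda$.

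For density, take any open $U\subset X$ and pick a $u$-curve $\gamma_0\subset U$ (the unstable cone field is open, so such curves exist). Iterating Proposition~\ref{prop:cross} yields nested restrictions $\hat\gamma_0\supset\hat\gamma_1\supset\cdots$ and integers $M_1<M_2<\cdots$ with $f^{M_l}(\hat\gamma_l)$ a $u$-curve crossing some strip $S_{i_l}$; the sequence $i_0,i_1,i_2,\dots$ is a walk in $\mathcal{G}$, and by finiteness it enters some terminal SCC $C_l$ at a step $L$. By construction $S_{i_L}$ contains some $p_j$ in our collection, so the crossing $u$-curve $f^{M_L}(\hat\gamma_L)$ meets $W^s_{p_j}(S_{i_L})\subset W^s_{p_j}(X)$ in a unique point, and pulling back through $f^{M_L}|_{\hat\gamma_L}$ produces $x\in U$ with $f^{M_L}(x)\in W^s_{p_j}(X)$, as required.

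The main obstacle will be making the horseshoe-style construction in the second paragraph rigorous: one must check that when the intermediate $u$-strips are chosen thin enough in the stable direction, the composition along a closed walk of length $r$ defines a single smooth iterate $f^N$ on a rectangle $R\subset S_v$ (never touching $\Gamma$) whose image is a full $u$-strip across $S_v$, and that $Df^N$-invariance of the unstable cone field survives the composition. Both points follow from the uniform cone-invariance and expansion estimates of Theorem~\ref{thm:global} together with the stable-foliated structure of $\Gamma$, but require some care; once they are in hand, the hyperbolic fixed-point extraction and the density deduction are routine.
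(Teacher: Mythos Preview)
Your approach is correct but more elaborate than the paper's. The paper dispenses with the directed graph on strips: it simply iterates a single $u$-curve via Proposition~\ref{prop:cross} until, by pigeonhole on the finitely many strips, two iterates $f^{q_1}\gamma_0$ and $f^{q_2}\gamma_0$ cross the \emph{same} strip $S$. The fixed point is then extracted by a purely one-dimensional argument rather than graph transform: composing $f^{q_2-q_1}$ with the (continuous) stable holonomy back onto the crossing curve $\gamma=f^{q_1}\gamma_0$ gives a continuous surjection $g:\tilde\gamma\to\gamma$ of intervals with $\tilde\gamma\subset\gamma$, and the intermediate value theorem produces a fixed point $x_0$; then $f^{q_2-q_1}$ restricted to $W^s_{x_0}(X)$ is a strict contraction whose fixed point is the desired periodic $p\in\Lambda$. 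This exploits $d_{cu}=2$ directly and avoids invoking hyperbolic fixed-point machinery on a stable foliation that is only topological. For density the paper again uses pigeonhole rather than SCCs: a fresh $u$-curve either eventually crosses a strip already containing some $p_i$ (hence meets $W^s_{p_i}(X)$), or its iterates revisit a \emph{new} strip and manufacture a new periodic point there; finiteness of strips terminates the process. Your SCC framework buys cleaner bookkeeping and the stronger conclusion that every strip in a terminal SCC hosts a periodic point --- the paper remarks explicitly that its proof does \emph{not} show this --- but at the cost of the composability verification you flag as the main obstacle, which the paper's pigeonhole-on-an-actual-orbit sidesteps entirely since the return to a strip is witnessed by a single trajectory rather than assembled from separately chosen edges.
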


\begin{proof}
Let $\gamma_0$ be a $u$-curve lying in a smooth strip.
By Proposition~\ref{prop:cross}, $f^{n_1}\gamma_0$ crosses a smooth strip for some $n_1\ge1$.
Moreover, there exists a restriction $\tilde\gamma_0\subset\gamma_0$ such that
$f^{n_1}$ maps $\tilde\gamma_0$ diffeomorphically inside this strip.  Applying Proposition~\ref{prop:cross}
again, we obtain $n_2>n_1$ such that $f^{n_2}\gamma_0$ crosses a strip.
Inductively, we obtain $1\le n_1<n_2<\cdots$ such that
$f^{n_j}\gamma_0$ crosses a strip for each $n_j$.
Since the number of smooth strips is finite, there exists $1\le q_1<q_2$ such that 
$f^{q_1}\gamma_0$ and $f^{q_2}\gamma_0$ cross the same smooth strip~$S$.

Let $q=q_2-q_1$, $\gamma=f^{q_1}\gamma_0$.  Choose a restriction 
$\tilde\gamma$ of $\gamma$ such that
  $f^q|_{\tilde\gamma}: \tilde\gamma \to f^q\tilde\gamma$
  is a diffeomorphism and 
  $f^q\tilde\gamma$ crosses $S$.
Shrink $\gamma$ and $\tilde\gamma$ if necessary so that 
$\gamma$ and $f^q\tilde\gamma$ cross $\Cl S$ and are contained in $\Cl S$.

Define  the surjection $g:\tilde\gamma\to \gamma$ such that $g(x)$ is the unique point where $W^s_{f^qx}(X)$ intersects $\gamma$.   
Since $W^s(X)$ restricts to a topological foliation of $S$, it follows that $g$ is continuous.  Also $\tilde\gamma\subset\gamma$ are one-dimensional curves, so by the intermediate value theorem 
$g$ possesses a fixed point $x_0\in\Cl{\tilde\gamma}$.

Since $g(x_0)=x_0$ it follows that $f^q x_0\subset W^s_{x_0}(X)$ and hence that
$f^q(W^s_{x_0}(X))\subset W^s_{x_0}(X)$.
By~\eqref{eq:delta}, $f^q:W^s_{x_0}(X)\to W^s_{x_0}(X)$ is a strict contraction, so
$f^qp=p$ for some $p\in W^s_{x_0}(X)$.  In particular, $p$ is a periodic point for $f$ lying in $X\cap U_0$.
Since $\Lambda$ is an attracting set, $p\in X\cap\Lambda$.
Moreover, $f^{q_1}\gamma_0$ intersects $W^s_p(X)$.

Starting with a new $u$-curve $\gamma_0'$ and proceeding as before, either
$f^n\gamma_0'$ crosses $S$ and hence intersects $W^s_p(X)$ for some $n\ge0$, or we can construct a new periodic orbit $p'$ in a new smooth strip such that $f^n\gamma_0'$ intersects $W^s_{p'}(X)$.   In this way we obtain periodic points $p_1,\dots,p_k$ 
such that every $u$-curve eventually intersects $\bigcup_{i=1}^k W^s_{p_i}(X)$ 
under iteration.  Since $u$-curves are dense and arbitrarily short, the result follows.
  \end{proof}

  \begin{rmk} The periodic points constructed in the proof
    of Proposition~\ref{prop:denseper} lie in distinct
    smooth strips, so $k\le m$.  The proof does not show
    that each strip contains a periodic point.
\end{rmk}

\begin{prop}\label{prop:fol}
  For each $x\in X$ there exists $y\in X\cap\Lambda$ such that $x\in W^s_y(X)$.
\end{prop}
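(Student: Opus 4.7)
The plan is to set $A = \{x \in X : W^s_x(X) \cap (X \cap \Lambda) \ne \emptyset\}$ and prove $A = X$ by establishing that $A$ is both closed and dense in $X$. For closedness, take $x_n \in A$ with $x_n \to x \in X$ and pick $y_n \in W^s_{x_n}(X) \cap X \cap \Lambda$. Compactness of $\Lambda$ yields a subsequence $y_n \to y \in \Lambda$. By the continuity of the topological stable foliation on $U_0$ from Proposition~\ref{prop:Ws}(b,c), the limit $y$ lies on the stable manifold $W^s_x$. The choices made in Section~\ref{sec:f} place every equilibrium of $\Lambda$ inside $V_0$ and away from the $\Sigma_{y_j}$, so $y$ is regular; the transversality of $\Sigma_{y_j}$ to the flow then pushes $y$ by a time in $(-\eps_0,\eps_0)$ onto a point of $W^s_x(X) \cap X \cap \Lambda$, giving $x \in A$.

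For density I would use Proposition~\ref{prop:denseper}, which furnishes $f$-periodic points $p_1,\dots,p_k \in X \cap \Lambda$ and shows $B := \bigcup_{n \ge 0} f^{-n}\bigl(\bigcup_i W^s_{p_i}(X)\bigr)$ is dense in $X$. Each leaf $W^s_{p_i}(X)$ lies in $A$ because $p_i \in W^s_{p_i}(X) \cap X \cap \Lambda$, so it suffices to show $B \subset A$. Fix $x \in B$ with $f^n(x) \in W^s_p(X)$ for some $n \ge 0$ and some $p = p_i$ of period $q$. Proposition~\ref{prop:invf} gives $f^{n+kq}(x) \in W^s_p(X)$ for all $k \ge 0$, and Proposition~\ref{prop:Ws}(a)(3) shows $f^{n+kq}(x) \to p$ along the leaf. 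Since strip boundaries are stable leaves (Proposition~\ref{prop:Gamma}), the strip-itinerary of $x$ determines a branch of $f^{n+kq}$ whose local inverse maps the image disk $f^{n+kq}(W^s_x(X)) \subset W^s_p(X)$ diffeomorphically onto $W^s_x(X)$, carrying $\Lambda$ into $\Lambda$ by backward flow-invariance $f^{-1}(X \cap \Lambda) \subset X \cap \Lambda$. It therefore suffices to exhibit, for some $k$, a point of $\Lambda$ inside the disk $f^{n+kq}(W^s_x(X))$; its preimage along the $x$-branch then lies in $W^s_x(X) \cap X \cap \Lambda$, giving $x \in A$.

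The main obstacle is producing this $\Lambda$-point in the shrinking disk: its radius $\lesssim \lambda^{n+kq}$ decreases faster than the distance $\sim \lambda^{kq} d_0$ of its center $f^{n+kq}(x)$ to $p$, so the natural candidate $p$ may never sit inside. I expect the resolution to use that $\Lambda \cap W^s_p(X)$ accumulates on $p$ along the leaf, which can be arranged by iterating Proposition~\ref{prop:denseper} within $W^s_p(X)$ (the dynamics of $f$ returns $u$-curves into $W^s_p(X)$ repeatedly, depositing countably many $\Lambda$-points accumulating at $p$); once the image disk contains such an accumulation point, its pullback along $x$'s strip-itinerary provides the desired element of $W^s_x(X) \cap X \cap \Lambda$.
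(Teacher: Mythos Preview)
Your overall scheme—prove that $A$ is closed and then show $A$ contains the dense set $B = \bigcup_{n\ge0}f^{-n}\bigl(\bigcup_i W^s_{p_i}(X)\bigr)$ from Proposition~\ref{prop:denseper}—matches the paper's, and your closedness argument is essentially the paper's final paragraph.

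The gap is in the step $B \subset A$, and you have put your finger on it honestly. Your proposed resolution, however, does not go through. First, the claim that $\Lambda \cap W^s_p(X)$ accumulates on $p$ along the leaf is not a consequence of Proposition~\ref{prop:denseper}: that proposition makes $u$-curves meet $W^s_p(X)$, but the intersection points so produced lie in $X$, not in $\Lambda$. Second, even if such accumulation held, it would not place a $\Lambda$-point inside your specific shrinking disk $f^{n+kq}(W^s_x(X))$: since $f^{q}$ restricted to $W^s_p(X)$ is a diffeomorphism fixing $p$, one has $p \in f^{n+kq}(W^s_x(X))$ for some $k$ if and only if $p \in f^{n}(W^s_x(X))$, which need not hold; the disks shrink to $\{p\}$ in Hausdorff distance but may never contain $p$ or any prescribed sequence converging to it.

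The paper avoids this difficulty by working transversally rather than along the leaf. Given $x\in B$ with $f^n x \in W^s_p(X)$, it invokes Remark~\ref{rmk:per} (no isolated periodic orbits in $\Lambda$) to obtain $\Lambda$-points arbitrarily close to $p$, hence stable leaves $W_j\subset E$ converging to $W^s_p(X)$. Choosing a saturated open neighbourhood $V\ni x$ on which $f^n$ is a diffeomorphism, one pulls back points $y_j \in W_j\cap f^nV$ and the accompanying $\Lambda$-points to get $x_j' \in V\cap\Lambda$ on leaves converging to $W^s_x(X)$; a subsequential limit $x'\in X\cap\Lambda$ together with continuity of the foliation gives $x \in W^s_{x'}(X)\subset E$. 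The key ingredient you are missing is Remark~\ref{rmk:per}: non-isolation supplies $\Lambda$-points on \emph{nearby leaves}, and then the closedness argument (which you already have) absorbs the limit, without ever requiring a $\Lambda$-point inside the contracted image of a single leaf.
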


\begin{proof}
Define 
\[
\textstyle E=\{ x\in X : x\in W^s_y(X)\;\text{for some $y\in X\cap\Lambda$}\}.
\]
 We show that $E=X$.

  Suppose first that $x\in \bigcup_{n\ge0}f^{-n}\big(\bigcup_{i=1}^k
  W^s_{p_i}(X)\big)$, so there exists $n\ge0$, $i\in\{1,\dots,k\}$
and $y\in W^s_{p_i}(X)$ such that
  $f^nx=y$.
Choose an open set $V$ formed of a union of stable leaves and containing $x$ such that $f^n|_V:V\to f^nV$ is a diffeomorphism.
By Remark~\ref{rmk:per},
periodic points are not isolated inside $X\cap \Lambda$, so there exists a
 sequence  $W_j$ of stable leaves inside $f^nV\cap E$ that converges to $W^s_{p_i}(X)$.
Choose $y_j\in W_j$ such that $y_j\to y$.  Let $x_j=f^{-n}y_j$ so 
$x_j\to x$.

Since $y_j\in E$, we have
  $y_j\in W^s_{y'_j}(X)$ for some $y'_j\in f^nV\cap\Lambda$.
Write $y'_j=f^nx_j'$ where $x_j'\in V\cap\Lambda$.
Since $f^n|_V$ is a diffeomorphism and 
$y_j\in W^s_{y'_j}(X)$, it follows that $x_j\in W^s_{x'_j}(X)$.

Passing to a subsequence if needed, we can assume that
$x'_j\to x'\in X\cap\Lambda$ and so $x_j\to x
\in W^s_{x'}(X)\subset E$.

We have shown that $E$ contains 
$\bigcup_{n\ge0}f^{-n}\big(\bigcup_{i=1}^k W^s_{p_i}(X)\big)$
and so $E$ is dense in $X$ by Proposition~\ref{prop:denseper}.

Now for $x\in\Sigma$ we take $x_k\in E$ so that $x_k\to x$.
We know that $x_k=W^s_{y_k}(\Sigma)$ for $y_k\in A$ and
passing to a subsequence we find $y\in A\cap\Sigma$ so that
$y_k\to y$. Then $x\in W^s_{y}(\Sigma)$ and $x\in E$.
\end{proof}

\begin{pfof}{Theorem~\ref{thm:fol}}
If $x\in W^u_\sigma$ for some equilibrium $\sigma$, then $x\in\Lambda$ and there is nothing to do.  Otherwise, restricting to a smaller positively invariant neighborhood $U_0$, we can ensure that there always exists $t>0$ such that $Z_{-t}x$ lies in one of the flow boxes $V_{y_j}$.  But then there exists $t>0$ such that $Z_{-t}x\in \Sigma_{y_j}\subset X$.  By Proposition~\ref{prop:fol}, $Z_{-t}x\in W^s_y(X)$ for some $y\in X\cap\Lambda$.
Hence there exists $t>0$ such that $Z_{-t}x\in W^s_y$, and so
$x\in W^s_z$ where $z=Z_ty\in\Lambda$.
\end{pfof}

We have shown that the stable lamination
$\{W^s_x:x\in\Lambda\}$ coincides with the stable foliation 
$\{W^s_x:x\in U_0\}$.
From now on, we refer to $\cW^s=\{W^s_x:x\in\Lambda\}$ as the stable foliation.

\section{H\"older regularity and absolute continuity of the stable foliation}
\label{sec:regWs}

In this section, we continue to assume that $\Lambda$ is a singular hyperbolic attracting set, and show that the topological foliation $\cW^s$ is in fact a H\"older foliation (bi-H\"older charts).  Also we recall results on absolute continuity of the stable foliation.
These results do not use explicitly the fact that $d_{cu}=2$; it suffices that the conclusion of Theorem~\ref{thm:fol} holds.

A key ingredient is regularity of stable holonomies.
Let $Y_0,\,Y_1\subset U_0$ be two smooth disjoint $d_{cu}$-dimensional
disks that are transverse to the stable foliation $\cW^s$.  Suppose that 
for all $x\in Y_0$, the stable leaf $W^s_x$ intersects each of $Y_0$ and $Y_1$ in precisely one point.
The {\em stable holonomy} $H:Y_0\to Y_1$ is given by defining $H(x)$ to be the intersection point of $W^s_x$ with $Y_1$.

\begin{lemma} \label{lem:PSW} 
There exists $\eps>0$ such that the stable holonomies $H:Y_0\to Y_1$ are $C^\eps$.   Moreover, if the angles between $Y_i$ and stable leaves are bounded away from zero for $i=0,1$, then there is a constant $K>0$ dependent on this bound but otherwise independent of the holonomy $H:Y_0\to Y_1$ such that
$d(H(y),H(y'))\le Kd(y,y')^\eps$ for all $y,y'\in Y_0$.
\end{lemma}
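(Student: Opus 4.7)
The plan is to reduce the claim to the H\"older section theorem of Pugh, Shub and Wilkinson~\cite{PSW97}. By Theorem~\ref{thm:fol}, the stable lamination $\{W^s_x:x\in\Lambda\}$ extends to a topological foliation in a neighborhood of $\Lambda$, and on $\Lambda$ itself the splitting $T_\Lambda M=E^s\oplus E^{cu}$ is $DZ_t$-invariant, uniformly contracting on $E^s$ (Proposition~\ref{prop:Es}), and dominated (Definition~\ref{def:PH}). This invariance of the splitting, together with the global cross-section structure of Section~\ref{sec:f}, allows us to work with the Poincar\'e map $f$ restricted to a neighborhood of $\Lambda\cap X$, which by Theorem~\ref{thm:global} is uniformly hyperbolic with invariant stable bundle and a uniformly transverse unstable cone field.

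First I would reduce to a uniformly hyperbolic discrete setting. After saturating the disks $Y_0,Y_1$ by the flow in a small time interval, one obtains cross-sections of the flow, and the holonomy between $Y_0$ and $Y_1$ is conjugated (via smooth flow boxes) to a stable holonomy for some iterate $f^n$ of the Poincar\'e map, composed with smooth local changes of coordinates. Hence it suffices to prove that stable holonomies between two smooth sections uniformly transverse to the stable leaves of $f$ (or of the flow, away from equilibria) are H\"older with a constant depending only on the angle of transversality.

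Next, I would apply the classical graph-transform/telescoping argument behind~\cite{PSW97}. Given $y,y'\in Y_0$ with $r=d(y,y')$, iterate forward by $f^n$: stable separations contract by at most $\Lambda_s^n$ (where $\Lambda_s\in(0,1)$ is a uniform contraction rate from Theorem~\ref{thm:global}), while transverse separations in the $E^u$ cone grow by at most $\Lambda_u^n$ (where $\Lambda_u$ bounds $\|Df\|$ along $\cC^u$). Choose $n$ so that $r\Lambda_u^n\approx\delta_0$, a fixed small scale at which the foliation is close to trivial in $C^0$ charts; compare $f^nH(y)$ and $f^nH(y')$ in such a chart (where their distance is controlled by the $C^0$-continuous dependence of $W^s$ on the base point plus the contracted stable component), and pull back by $f^{-n}$ along the stable leaves, which contracts by $\Lambda_s^n$. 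Optimizing $n$ gives the exponent $\eps=\log\Lambda_s^{-1}/\log\Lambda_u$, and the constant depends only on $\delta_0$, the hyperbolicity constants, and the lower bound on the angle between $Y_i$ and the stable leaves (which enters through the $C^0$ size of the foliation charts).

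The main obstacle is to make sure the argument is genuinely local to $\Lambda$, because $E^{cu}$ is not invariant off $\Lambda$. This is exactly what Theorem~\ref{thm:fol} buys: the foliation through any $x$ near $\Lambda$ is the stable leaf through some $z\in\Lambda$, so the dominating expansion rates along $E^{cu}$ used in the PSW estimate are controlled by the rates on $\Lambda$ where the splitting \emph{is} invariant. The only remaining subtlety is choosing $f$-iterates of $y$ and $y'$ that remain in the uniformly hyperbolic regime (i.e.\ away from the singular set $\Gamma$ for finitely many steps), which is handled by Proposition~\ref{prop:sec-cone} and the fact that one needs only finitely many iterates once $r$ is fixed.
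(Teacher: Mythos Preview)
Your overall strategy --- invoke Theorem~\ref{thm:fol} so that the lamination through points near $\Lambda$ coincides with stable leaves based at points of $\Lambda$, where the splitting $E^s\oplus E^{cu}$ is genuinely invariant, and then appeal to the Pugh--Shub--Wilkinson H\"older holonomy theorem --- is the same as the paper's. The paper's proof is a two-line citation of~\cite[Theorem~A$'$]{PSW97} applied to the time-one map $Z_1$ of the flow, which is a smooth diffeomorphism of the compact manifold $M$ with the invariant partially hyperbolic splitting $T_\Lambda M=E^s\oplus E^{cu}$.

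Where your proposal goes wrong is the choice of discretization. You reduce to the Poincar\'e map $f:X\to X$ and then run the telescoping argument using iterates $f^n$. But $f$ has singularities at $\Gamma_0$ (trajectories hitting equilibria) and, more seriously, $\|Df\|$ is unbounded near $\Gamma_0$: the proof of Proposition~\ref{prop:secUH}(b) shows that $|Df|E^u_x(\Sigma)|\ge C^{-1}e^{\theta\tau(x)}$, which blows up as $x\to\Gamma_0$ by Remark~\ref{rmk:log}. So there is no uniform expansion bound $\Lambda_u$ along the cone, and the exponent $\eps=\log\Lambda_s^{-1}/\log\Lambda_u$ you write down is not well defined. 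Your attempted fix --- ``one needs only finitely many iterates once $r$ is fixed'' --- does not help, because the H\"older inequality must hold for all $r>0$, and the number of iterates $n\approx\log(\delta_0/r)/\log\Lambda_u$ tends to infinity as $r\to0$; you cannot keep all forward iterates $f^j y,f^j y'$ uniformly away from $\Gamma$ for $j\le n$ when $n$ is unbounded.

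The remedy is simply to use $Z_1$ instead of $f$. Then the map is $C^r$ on all of $M$, the derivative is uniformly bounded above and below, the splitting over $\Lambda$ is $DZ_1$-invariant, and the PSW telescoping argument (or a direct citation of~\cite{PSW97}) goes through without any singular set to avoid. Your flow-box conjugation step is then unnecessary: the disks $Y_0,Y_1$ are already transversals to the stable foliation of $Z_1$, and the holonomy $H$ is literally the stable holonomy for that diffeomorphism.
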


\begin{proof}
By Theorem~\ref{thm:fol}, we can view $\cW^s$ as the stable lamination
corresponding to the invariant splitting
$T_\Lambda M=E^s\oplus E^{cu}$ for the 
partially hyperbolic diffeomorphism $f=Z_1$.   
Hence we can apply~\cite[Theorem~A']{PSW97}.
The result in~\cite{PSW97} is formulated slightly differently in terms of a splitting $T_\Lambda M=E^s\oplus E^c\oplus E^u$,  but their 
proof covers our situation (with the invariant splitting $T_\Lambda M=E^u\oplus E^{cs}$ there replaced by the symmetric situation $T_\Lambda M=E^s\oplus E^{cu}$).
\end{proof}

\begin{thm} \label{thm:holder} 
The stable foliation $\cW^s$ is $C^\eps$ for some $\eps>0$.
\end{thm}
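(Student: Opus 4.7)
The plan is to construct explicit bi-H\"older foliation charts about each point, leveraging the H\"older holonomy bound of Lemma~\ref{lem:PSW}. Fix $x_0\in\Lambda$, choose a smooth $d_{cu}$-dimensional disk $T\subset U_0$ through $x_0$ transverse to the extended bundle $E^s$, and take a smooth product neighborhood $U\cong T\times\cD^{d_s}$ of $x_0$ in which the ``vertical'' slices $\{\eta=c\}$ remain uniformly transverse to $\cW^s$ and uniformly close to $T$. Because $\cW^s$ is a topological foliation (Theorem~\ref{thm:fol} and Proposition~\ref{prop:Ws}(c)), each stable leaf through a point of $U$ meets $T$ and every slice $\{\eta=c\}$ in a single point, so the map
\[
\Psi\colon U\to T\times\cD^{d_s},\qquad \Psi(p)=(y(p),\eta(p)),\quad y(p):=W^s_p\cap T,
\]
is a well-defined homeomorphism sending each stable leaf to a horizontal slice $\{y=\text{const}\}$. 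It thus suffices to show that $\Psi$ and $\Psi^{-1}$ are H\"older.

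For $\Psi$ the $\eta$-component is smooth, so only the $y$-component requires work. Given $p,q\in U$, introduce the auxiliary point $r:=W^s_q\cap\{\eta=\eta(p)\}$; then $p$ and $r$ lie on the common smooth transversal $\{\eta=\eta(p)\}$, and $y(p),\,y(q)=y(r)$ are their images under the stable holonomy from this transversal to $T$. Continuity of the extended splitting $E^s\oplus E^{cu}$ on $U_0$ ensures that the angle between $\{\eta=c\}$ and $\cW^s$ is bounded away from $0$ uniformly in $c\in\cD^{d_s}$, so the second part of Lemma~\ref{lem:PSW} yields $d(y(p),y(q))\le Kd(p,r)^\eps$ with $K$ independent of $p,q$ and of which slice is used. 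The uniform Lipschitz bound on stable leaves from Proposition~\ref{prop:Ws}(b) then gives $d(q,r)\le L|\eta(p)-\eta(q)|\le L'd(p,q)$, hence $d(p,r)\le(1+L')d(p,q)$ and finally $d(y(p),y(q))\le K''d(p,q)^\eps$.

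For $\Psi^{-1}$, set $p_i:=\Psi^{-1}(y_i,\eta_i)$ for $i=0,1$ and introduce $p_0':=W^s_{y_0}\cap\{\eta=\eta_1\}$. Then $p_0$ and $p_0'$ lie on the common leaf $W^s_{y_0}$, so the uniform Lipschitz parametrization of stable leaves gives $d(p_0,p_0')\le L|\eta_0-\eta_1|$; while $p_0'$ and $p_1$ lie on the common transversal $\{\eta=\eta_1\}$ and are related by the inverse of the stable holonomy $\{\eta=\eta_1\}\to T$, which is itself a stable holonomy and therefore H\"older by Lemma~\ref{lem:PSW}, giving $d(p_0',p_1)\le K|y_0-y_1|^\eps$. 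Combining yields $d(p_0,p_1)\le C(|y_0-y_1|+|\eta_0-\eta_1|)^\eps$ for nearby points. The one subtle point is that the source transversal used in the argument for $\Psi$ varies with $p$, so the proof crucially invokes the uniform constant in the second half of Lemma~\ref{lem:PSW}; once this uniformity is in hand the remainder is a routine chase of holonomies and leaf parametrizations.
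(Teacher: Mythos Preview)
Your argument is correct and follows essentially the same approach as the paper: build a foliation chart and verify it is bi-H\"older by combining the uniform H\"older holonomy bound of Lemma~\ref{lem:PSW} with uniform Lipschitz control along leaves. The paper constructs its chart $\chi(u,v)=\gamma(\psi(v))(u)$ from the leaf parametrizations of Proposition~\ref{prop:Ws} and must then verify (via the graph representation from~\cite{AraujoM17}) that the images $Y_u$ of vertical lines are smooth transversals with angles bounded away from zero; your setup with a smooth product neighborhood $U\cong T\times\cD^{d_s}$ makes the transversals $\{\eta=c\}$ smooth and uniformly transverse by construction, which is a small simplification. One minor imprecision: the bound $d(q,r)\le L|\eta(p)-\eta(q)|$ does not literally follow from the Lipschitz bound on $\gamma(x)$ in Proposition~\ref{prop:Ws}(b) (that bounds leaf distance by the $\gamma$-parameter, not by the $\eta$-coordinate); rather it follows from the uniform transversality you already asserted, which forces $\eta|_{W^s_z}$ to have uniformly bi-Lipschitz inverse.
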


\begin{proof}
   Let $\{\gamma(x):x\in U_0\}$ be the family of embeddings $\gamma(x):\cD^{d_s}\to W^s_x$ described in Proposition~\ref{prop:Ws}.

Let $x\in U_0$ and
choose an embedded $d_{cu}$-dimensional disk $Y_0\subset M$
containing $x$ and transverse to $W^s_x$.  By continuity of
$E^s$, we can shrink $Y_0$ so that $Y_0$ is transverse to
$W^s_y$ at $y$ for all $y\in Y_0$.  Let $\psi:\cD^{d_{cu}}\to Y_0$ be
a smooth embedding.
The proof of~\cite[Lemma~4.9]{AraujoM17} shows that
the map $\chi:\cD^{d_s}\times\cD^{d_{cu}}\to U_0$ given by
\[
\chi(u,v)=\gamma(\psi(v))(u)
\]
is a topological chart for $\cW^s$ at $x$. Note that $\chi$ maps
horizontal lines $\{v={\rm const.}\}$ homeomorphically onto
stable disks $W^s_{\psi(v)}$.

Moreover, we claim that $\chi$ maps vertical lines $\{u={\rm const.}\}$ onto smooth transversals $Y_u$ to $\cW^s$.
To see this, we recall the notation $\gamma(y)(u)=Q(u,\varphi_y(u))$
from the proof of~\cite[Lemma~4.8]{AraujoM17}.  Here
$Q=Q_{x,0}:\R^d\to M$ is a diffeomorphism and $Q^{-1}(W^s_y)$ is given by the graph of $\varphi_y:\cD^{d_s}\to\cD^{d_{cu}}$.
Hence
\begin{align*}
Y_u  =\{\chi(u,v):v\in\cD^{d_{cu}}\}
& =\{\gamma(\psi(v))(u):v\in\cD^{d_{cu}}\}
\\ & =\{\gamma(y)(u):y\in Y_0\}
=Q\{(u,\varphi_y(u)):y\in Y_0\}.
\end{align*}
The curves $W^s_y$ foliate $U_0$, so the curves $Q^{-1}(W^s_y)=\{(u,\varphi_y(u))\}$ foliate $\cD^{d_s}\times\cD^{d_{cu}}$.   Hence 
the set $\{(u,\varphi_y(u)):y\in Y_0\}$ is precisely $\{u={\rm const.}\}$ and so
$Y_u=Q(\{u={\rm const.}\})$ verifying the claim.

Moreover, via the diffeomorphism $Q$, the angles of $Y_u$ with stable disks $W^s_y$ are bounded away from zero.   Hence for any $u\neq0$, the stable holonomy 
$H_u:Y_0\to Y_u$ satisfies
$d(H_u(y),H_u(y'))\le Kd(y,y')^\eps$ for all $y,y'\in Y_0$ by Lemma~\ref{lem:PSW}.
Also $H_u^{-1}:Y_u\to Y_0$ is a stable holonomy, so
$d(H_u^{-1}(y),H_u^{-1}(y'))\le Kd(y,y')^\eps$ for all $y,y'\in Y_u$.

Now $\chi(u,v)=\gamma(\psi(v))(u)=H_u(\psi(v))$, so
\begin{align*}
d(\chi(u,v),\chi(u,v'))  & =
d(H_u(\psi(v)),H_u(\psi(v')))
\\ & \le K d(\psi(v),\psi(v'))^\eps\le K(\Lip\psi)^\eps \|v-v'\|^\eps.
\end{align*}
Also there is a constant $L_1>0$ such that
\begin{align*}
d(\chi(u,v'),\chi(u',v')) & =
d(\gamma(\psi(v'))(u),\gamma(\psi(v'))(u'))
\\ & \le \Lip\gamma(\psi)(v')\|u-u'\|
\le L\|u-u'\|\le L_1\|u-u'\|^\eps.
\end{align*}
Altogether, letting $M=\max\{K(\Lip\psi)^\eps,L_1\}$,
  \[
d(\chi(u,v),\chi(u',v'))\le M
  \big(\|u-u'\|^\eps+
    \|v-v'\|^\eps\big)
  \\
  \le CM 
(\|u-u'\|^2+\|v-v'\|^2)^{\eps/2}
\]
where $C>0$ is an upper bound for the homogeneous function
$\displaystyle\frac{|x|^\eps+|y|^\eps}{(|x|^2+|y|^2)^{\eps/2}}$
over the set of $(x,y)\in\R^2$ such
that $|x|^2+|y|^2=1$.
Hence $\chi$ is $C^\eps$.

Next,
\begin{align*}
\|u-u'\| & \le 
\|(u,\varphi_{\psi(v)}(u))-(u',\varphi_{\psi(v')}(u'))\|
  \\ & \le\Lip(Q^{-1})
  d(Q(u,\varphi_{\psi(v)}(u)),Q(u',\varphi_{\psi(v')}(u')))
 \\ & =\Lip(Q^{-1})d(\gamma(\psi(v))(u),\gamma(\psi(v'))(u'))=\Lip(Q^{-1})d(\chi(u,v),\chi(u',v')),
\end{align*}
and
\begin{align*}
\|v-v'\|  \le \Lip(\psi^{-1})d(\psi(v),\psi(v'))
 & = \Lip(\psi^{-1})d(H_{u'}^{-1}\chi(u',v),H_{u'}^{-1}\chi(u',v'))
\\ & \le K \Lip(\psi^{-1})d(\chi(u',v),\chi(u',v'))^\eps.
\end{align*}
Moreover,
\begin{align*}
\|(u',\varphi_{\psi(v)}(u'))-(u',\varphi_{\psi(v')}(u'))\|
& =\|\varphi_{\psi(v)}(u')-\varphi_{\psi(v')}(u')\|
\\ & \le
\|\varphi_{\psi(v)}(u')-\varphi_{\psi(v)}(u)\|
+\|\varphi_{\psi(v)}(u)-\varphi_{\psi(v')}(u')\|
\\ & \le
L\|u-u'\| +\|\varphi_{\psi(v)}(u)-\varphi_{\psi(v')}(u')\|
\\ & \le
(L+1)\|(u,\varphi_{\psi(v)}(u))-(u',\varphi_{\psi(v')}(u'))\|,
\end{align*}
so
\[
d(\chi(u',v),\chi(u',v'))\le (L+1)\Lip Q\Lip(Q^{-1}) d(\chi(u,v),\chi(u',v')).
\]

Combining these estimates, we obtain
$\|(u,v)-(u',v')\|\le{\rm const.}\,d(\chi(u,v),\chi(u',v'))^\eps$.
Hence
$\|\chi^{-1}(p)-\chi^{-1}(p')\|\le{\rm const.}\,d(p,p')^\eps$ for 
$p,p'\in U_0$, so $\chi^{-1}\in C^\eps$.
\end{proof}

\begin{thm}\label{thm:H}
  The stable holonomy $H:Y_0\to Y_1$ is absolutely continuous. That is,
  $m_1\ll H_*m_0$ where $m_i$ is Lebesgue measure
  on $Y_i$, $i=0,1$.

Moreover, the Jacobian  $JH:Y_0\to\R$ given by
  \begin{align*}
JH(x)=\frac{dm_1}{dH_*m_0}(Hx)=\lim_{r\to0}\frac{m_1(H(B(x,r)))}{m_0(B(x,r))},\quad x\in Y_0,
  \end{align*}
  is bounded above and below and is $C^\eps$ for some $\eps>0$.
\end{thm}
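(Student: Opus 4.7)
The plan is to leverage the fact that, thanks to Theorem~\ref{thm:fol}, the stable foliation $\cW^s$ is a genuine topological foliation with an invariant dominated splitting $T_\Lambda M=E^s\oplus E^{cu}$ on $\Lambda$. This removes the obstruction (noninvariance of the extended splitting) that had prevented prior work from cleanly applying Pesin theory, so the standard absolute continuity theorem for the stable lamination of a partially hyperbolic attractor (see \cite{BarreiraPesin}) applies directly, yielding that $H:Y_0\to Y_1$ is absolutely continuous with Jacobian given by the infinite product
\[
JH(x)=\prod_{n=0}^\infty \frac{|\det(DZ_1|E^{cu}_{Z_nx})|}{|\det(DZ_1|E^{cu}_{Z_n(Hx)})|}.
\]
(The point is that $H$ translates a point to the nearby point on the same stable leaf; iterating forward, the two orbits converge exponentially, so the product must telescope to the correct density.)

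To prove convergence of the product and the upper/lower bounds on $JH$, I would first recall that because the splitting on $\Lambda$ is uniformly dominated and $Z_1$ is $C^r$ with $r>1$, the bundle $E^{cu}$ is H\"older continuous on $\Lambda$ with some exponent $\eps>0$ (this is a well-known consequence of domination, e.g.\ \cite{PSW97}). Since $y\in W^s_x$ implies $\dist(Z_nx,Z_ny)\le C\lambda^n\dist(x,y)$ by Proposition~\ref{prop:Ws}, each logarithmic term in the product is $O(\lambda^{n\eps})$ uniformly in $x$. Hence $\log JH$ is the sum of a uniformly convergent series bounded by a geometric tail, which gives uniform upper and lower bounds on $JH$ on all of $Y_0$ (the estimate depends only on the constants in the domination and the H\"older norm of $E^{cu}$, not on the particular transversals, provided the angles to $\cW^s$ are bounded away from zero).

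For the H\"older regularity of $JH$, I would estimate $|\log JH(x)-\log JH(x')|$ for $x,x'\in Y_0$ by splitting the series at some index $N$ with $\lambda^N\approx\dist(x,x')$. For $n\le N$, I use that the map $p\mapsto \log|\det(DZ_1|E^{cu}_p)|$ is $C^\eps$ on $\Lambda$ (again from H\"older regularity of $E^{cu}$ together with $C^{r-1}$ smoothness of $DZ_1$), so each term is bounded by a constant times $\dist(Z_nx,Z_nx')^\eps\le C\dist(x,x')^\eps$, giving a total contribution $O(N\dist(x,x')^\eps)$. For $n>N$, I use the geometric tail bound $O(\lambda^{n\eps})$, which sums to $O(\lambda^{N\eps})=O(\dist(x,x')^\eps)$. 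Absorbing the logarithmic $N$ factor by reducing $\eps$ slightly gives $|\log JH(x)-\log JH(x')|\le K\dist(x,x')^{\eps'}$, and then boundedness of $JH$ transfers the H\"older estimate from $\log JH$ to $JH$ itself.

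The main obstacle I anticipate is less a conceptual one than a bookkeeping one: carefully verifying that the hypotheses of the absolute continuity theorem from \cite{BarreiraPesin} are exactly those supplied by Theorem~\ref{thm:fol} together with Propositions~\ref{prop:Es}, \ref{prop:Ws}, and~\ref{prop:VE}, and making sure the H\"older regularity of $E^{cu}$ on $\Lambda$ (as opposed to the extended bundle on $U_0$, which need not be invariant) is the version actually needed for the Jacobian estimates. Once that invariance-on-$\Lambda$ issue is properly handled, the rest of the proof is a standard telescoping argument whose quantitative aspects are controlled by the exponential contraction on $E^s$ and the H\"older exponent of the invariant center-unstable bundle.
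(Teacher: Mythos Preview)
Your approach is essentially the same as the paper's: both reduce the result to the standard absolute continuity theorem for stable laminations from Pesin theory (as in \cite{BarreiraPesin}, \cite{PughShub72}, \cite{Mane}), with Theorem~\ref{thm:fol} doing the crucial work of ensuring the splitting is genuinely invariant on $\Lambda$ so that these references apply cleanly. The paper's proof is in fact even terser than yours, amounting to a citation together with the remark that the literature often assumes a stronger uniform domination hypothesis $\sup_x\|DZ_t|E^s_x\|\cdot\sup_x\|DZ_{-t}|E^{cu}_{Z_tx}\|\le C\lambda^t$ rather than the pointwise condition~\eqref{eq:domination}, but that the extension is standard.

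One slip in your H\"older sketch deserves correction. In the ``head'' estimate for $n\le N$ you assert $\dist(Z_nx,Z_nx')^\eps\le C\dist(x,x')^\eps$, but $x,x'\in Y_0$ are on a transversal to the stable foliation, so forward iteration can \emph{expand} their separation (at a rate governed by $\|DZ_1|E^{cu}\|$). The standard fix is to allow a growth factor $\mu^n$ in the head terms and then choose the splitting index $N$ so that $\mu^N\dist(x,x')\approx 1$; balancing the head contribution $O(\mu^{N\eps}\dist(x,x')^\eps)$ against the tail $O(\lambda^{N\eps})$ then yields a H\"older estimate with exponent $\eps\cdot\log\lambda^{-1}/\log\mu$. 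With this correction your outline is sound and matches what is implicit in the cited references.
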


\begin{proof}
  This essentially follows from \cite[Theorems 8.6.1 and
  8.6.13]{BarreiraPesin}.  See also~\cite[Theorem~2.1]{PughShub72} and~\cite[Section~III.3]{Mane}.
The results there are formulated under a condition of the type 
$\sup_{x\in\Lambda}\|DZ_t | E^s_x\| \sup_{x\in\Lambda} \|DZ_{-t} | E^{cu}_{Z_tx}\| \le C \lambda^t$ which is more restrictive than the domination condition~\eqref{eq:domination}.
However, it is standard that such results generalise to our setting.
(See the remark in~\cite{PughShub72} after their theorem.  Most of the required result is covered by~\cite{PughShub72} except that H\"older continuity of $JH$ is not mentioned, only continuity.)
\end{proof}

%%%%%%%%%%%%%%%%%%%%%%%%%%%%%%%%%%%%%%%%%%%%%%%%%%%%%%%%%%%%%%%%%%

\section{One-dimensional quotient map $\bar f:\bX\to\bX$}
\label{sec:barf}

In this section, we continue to suppose that $\Lambda$ is a singular hyperbolic attracting set with $d_{cu}=2$.
Let $f:X'\to X$ be the global Poincar\'e map defined in Section~\ref{sec:f}
with invariant stable foliation $\cW^s(X)$.
We now show how to obtain a one-dimensional
piecewise $C^{1+\eps}$ uniformly expanding quotient map
$\bar f:\bX'\to\bX$.

We begin by analysing the stable holonomies for $f$.
Let $\gamma_0,\,\gamma_1\subset X$ be two $u$-curves such that 
for all $x\in \gamma_0$, the stable leaf $W^s_x(X)$ intersects each of $\gamma_0$ and $\gamma_1$ in precisely one point.
The {\em (cross-sectional) stable holonomy} $h:\gamma_0\to \gamma_1$ is given by defining $h(x)$ to be the intersection point of $W^s_x(X)$ with $\gamma_1$.

  \begin{lemma}\label{lem:h}
    The stable holonomy $h$ is $C^{1+\eps}$ for some $\eps>0$.
  \end{lemma}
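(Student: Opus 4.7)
The plan is to reduce to Theorem~\ref{thm:H} by flow-thickening the $u$-curves into $2$-dimensional disks transverse to $\cW^s$, and then exploit flow equivariance to distill the resulting H\"older Jacobian into $C^{1+\eps}$ regularity of the $1$-dimensional map $h$.

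First I would set $Y_0=\bigcup_{|t|<\eps_2}Z_t\gamma_0$ and $Y_1=\bigcup_{t\in(-T_*,T_*)}Z_t\gamma_1$ with $\eps_2>0$ small and $T_*>\eps_0$ large. For appropriate choices, both $Y_i$ are smoothly embedded $2$-dimensional disks transverse to $\cW^s$ with transversality angles bounded below, since $\gamma_i$ has tangent vectors in the center-unstable cone and the flow direction also lies in $E^{cu}$. The hypothesis that $W^s_x(X)\cap\gamma_1$ is a single point for each $x\in\gamma_0$ translates, after flow-thickening, into $W^s_y\cap Y_1$ being a single point for each $y\in Y_0$. Defining $H:Y_0\to Y_1$ by $H(y)=W^s_y\cap Y_1$, Theorem~\ref{thm:H} then supplies that $H$ is absolutely continuous with $C^\eps$ Jacobian $JH$ bounded above and away from $0$.

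Because $Y_1$ is (locally) flow-invariant, $H$ satisfies the equivariance $H\circ Z_t=Z_t\circ H$ wherever both sides are defined. Parametrising $Y_0$ and $Y_1$ via the smooth flow charts $(z,t)\mapsto Z_t z$, this equivariance forces $H$ to take the triangular form $H(z,t)=(h_\ast(z),s(z)+t)$ for some continuous maps $h_\ast:\gamma_0\to\gamma_1$ and $s:\gamma_0\to\R$; a short verification using the definitions identifies $h_\ast$ with the cross-sectional holonomy $h$ (and shows that $|s|<\eps_0$ automatically). The coordinate Jacobian of this triangular map equals $|Dh|$, so converting between the product Lebesgue measure in $(z,t)$ and the Riemannian measure on $Y_i$ via smooth flow-box densities $\rho_0,\rho_1$ gives
\[
|Dh(z)|=JH(Z_t z)\,\frac{\rho_0(z,t)}{\rho_1(h(z),s(z)+t)}.
\]
The right-hand side is $C^\eps$ in $(z,t)$ and uniformly bounded above and away from $0$; setting $t=0$ exhibits $|Dh|$ everywhere as a positive $C^\eps$ function of $z$. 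Combined with absolute continuity of $h$ (inherited from that of $H$ by a Fubini argument in these product coordinates), integrating the derivative then yields $h\in C^{1+\eps}$.

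The main obstacle is the passage from the $2$-dimensional Jacobian of $H$, which is all that Theorem~\ref{thm:H} furnishes, to the classical $1$-dimensional derivative of $h$; the triangular coordinate form forced by flow equivariance is what bridges this gap. A secondary care point is choosing $T_*$ sufficiently large relative to $\eps_2$ so that $Y_1$ is embedded and contains $H(Y_0)$ with enough margin for the equivariance to remain usable, and then justifying the Fubini-type identification of absolute continuity of $h$ from that of $H$ in these coordinates.
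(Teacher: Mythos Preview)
Your proposal is correct and follows essentially the same route as the paper: flow-thicken the $u$-curves to $2$-dimensional transversals, apply Theorem~\ref{thm:H}, use flow invariance to put $H$ in the triangular form $H(v,t)=(h(v),\xi(v)+t)$, and deduce that the one-dimensional Jacobian $|Dh|$ is H\"older. The paper streamlines your density bookkeeping by first passing to linear flow-box coordinates so that $JH=Jh$ on the nose; in your version you should make explicit that $h$ and $s$ are already $C^\eps$ (by Lemma~\ref{lem:PSW} applied to $H$), so that the factor $\rho_1(h(z),s(z)+t)$ is indeed $C^\eps$.
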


\begin{proof}
Recall that $X=\bigcup \Sigma_{y_j}$ where $\Sigma_{y_j}$ is the cross-section
associated to the flow box $V_{y_j}$ for each $j$.  Since the result is local,
we can suppose that $\gamma_0,\gamma_1\subset \Sigma_{y_j}$ for some $j$ and we can choose coordinates so that the local flow $Z_t$ is linear.

Consider the $2$-dimensional disks
$Y_i=\bigcup_{t\in[-\delta_i,\delta_i]}Z_t(\gamma_i)
=\gamma_i\times[-\delta_i,\delta_i], i=0,1$, for fixed
$\delta_i>0$.  These are smooth transversals to the stable
foliation $\cW^s$ of the flow.  Provided $\delta_0$ is small
with respect to $\delta_1$, we can then consider the
holonomy $H:Y_0\to Y_1$ as in Section~\ref{sec:regWs}.

For $p=(v,0)\in \gamma_0\subset Y_0$ we
write $H(v,0)=(H_1(v),\xi(v))$ with $H_1:\gamma_0\to \gamma_1$
and $\xi:\gamma_0\to[-\delta_1,\delta_1]$. Clearly $h=H_1$
by construction. Since $\cW^s$ is flow invariant, 
\begin{align*}
  H(v,t)=(h(v),\xi(v)+t).
\end{align*}
Let $\lambda_i=(\pi_i)_*m_i$ denote Lebesgue measure on
$\gamma_i, i=0,1$, where $\pi_i:Y_i\to \gamma_i$ is
the natural projection.  
By Theorem~\ref{thm:H}, $m_1\ll H_*m_0$.  Since $\pi_1H=h\pi_0$,
\begin{align*}
\lambda_1=(\pi_1)_*m_1\ll (\pi_1H)_*m_0=(h\pi_0)_*m_0= h_*\lambda_0.
\end{align*}
Hence $h$ is absolutely continuous.

Taking balls $B(x,r)$ to be rectangles, we have for $r$ sufficiently small
\[
H(B(x,r))=\bigcup_{v'\in B(v,r)}\{h(v')\}\times(t+\xi(v')-r,t+\xi(v')+r).
\]
By Fubini,
\[
\frac{m_1(H(B(x,r)))}{m_0(B(x,r))}
=\frac{2r\lambda_1(h(B(v)))}{2r\lambda_0(B(v,r))}
=\frac{\lambda_1(h(B(v,r)))}{\lambda_0(B(v,r))},
\]
showing that $JH(x)=Jh(v)$ for all $x=(v,t)$.
By Theorem~\ref{thm:H}, $Jh$ is H\"older.   But $\dim\gamma_0=\dim\gamma_1=1$, so $Jh=|Dh|$ and the result follows.
\end{proof}

Recall that $X$ is a union of finitely many cross-sections
$\Sigma_{y_j}$, and that $f$ is smooth on a subset $X''\subset X'\subset X$ which is obtained from $X$ by removing finitely many stable leaves.
Moreover, each $\Sigma_{y_j}\cap X''$ is a union of finitely many connected smooth strips $S$ such that
$f|_S:S\to f(S)$ is a diffeomorphism.

For each $j$, let $\gamma_j\subset \Sigma_{y_j}$ be a $u$-curve crossing $\Sigma_{y_j}$.
Define $\bX=\bigcup_j\Cl\gamma_j$ 
and $\bX'=X'\cap\bX$.   
Given a smooth strip $S\subset \Sigma_{y_j}$,
there exists $k$ such that
$f(S)\subset\Sigma_{y_k}$.  
Also $f(\gamma_j)$ is a $u$-curve by Theorem~\ref{thm:global}.
Let 
$h:f(S\cap \gamma_j)\to \gamma_k$ 
be the associated stable holonomy and
define $\bar f(x)=h(fx)$ for $x\in S\cap\gamma_j$.  In this way we obtain
a one-dimensional map $\bar f:\bX'\to \bX$.

\begin{cor}   \label{cor:barf}
The quotient map $\bar f:\bX'\to\bX$ is piecewise $C^{1+\eps}$ and consists of finitely many monotone $C^{1+\eps}$ branches.  Choosing $T_1$ in Section~\ref{sec:UH} sufficiently large, we have $|D\bar f|\ge2$ on $\bX'$.
\end{cor}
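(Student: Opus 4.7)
The plan is to handle the three assertions—regularity, monotonicity, and the expansion bound—using the factorization $\bar f=h\circ f|_{\gamma_j\cap S}$ on each branch, where $f|_S$ is a smooth Poincar\'e map and $h$ is a cross-sectional stable holonomy.

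I first enumerate the branches. Each smooth strip $S\subset\Sigma_{y_j}$ is a union of stable leaves and $\gamma_j$ is a $u$-curve crossing $\Sigma_{y_j}$, so $\gamma_j\cap S$ is a single arc. Finiteness of the set of strips $\{S_i\}$ therefore produces finitely many branches. For regularity, $f|_S\colon S\to f(S)$ is a $C^r$ diffeomorphism ($r>1$), so its restriction to $\gamma_j\cap S$ is a $C^r$ diffeomorphism onto the $u$-curve $f(\gamma_j\cap S)\subset\Sigma_{y_k}$. Composing with the $C^{1+\eps}$ holonomy $h\colon f(\gamma_j\cap S)\to\gamma_k$ supplied by Lemma~\ref{lem:h} gives a $C^{1+\eps}$ map which, being a composition of diffeomorphisms between intervals, is automatically monotone.

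For the expansion bound, let $x\in\gamma_j\cap S$ and let $v$ be the unit vector tangent to $\gamma_j$ at $x$. Parametrizing $\gamma_j$, $f(\gamma_j\cap S)$ and $\gamma_k$ by arc length, the chain rule yields
\[
|D\bar f(x)|=|Dh(fx)|\cdot\|Df(x)v\|.
\]
Since $\gamma_j$ is a $u$-curve we have $v\in\cC^u_x(X,a)$, so Theorem~\ref{thm:global} gives $\|Df(x)v\|\ge\lambda_1^{-1}$, and $\lambda_1\in(0,1)$ can be made arbitrarily small by increasing $T_1$. The computation in the proof of Lemma~\ref{lem:h} identifies $|Dh|$ with the ambient Jacobian $JH$ of the stable holonomy between the $d_{cu}$-disks obtained by flowing the $u$-curves over short times; by Theorem~\ref{thm:H} this quantity is bounded below by a positive constant $c$. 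Combining, $|D\bar f|\ge c\lambda_1^{-1}$ on $\bX'$, and choosing $T_1$ large enough that $\lambda_1^{-1}\ge 2/c$ gives $|D\bar f|\ge 2$.

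The one subtlety, which I expect to be the main obstacle, is that increasing $T_1$ refines the partition into smooth strips and thereby changes the family of holonomies $h$ in play, so the lower bound $c$ must be shown to be uniform in $T_1$. However the quantitative data controlling $JH$—the angles between the transversals and the stable foliation and the lengths of the connecting stable leaves—depend only on the fixed bounded cross-sections $\Sigma_{y_j}$ and on the fixed unstable cone $\cC^u(\cdot,a)$, which contains the tangent vectors of every $u$-curve that appears as a domain or range of $h$. This yields a $T_1$-independent constant $c>0$ and completes the argument.
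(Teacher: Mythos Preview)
Your proof is correct and follows the same approach as the paper: factor $\bar f=h\circ f$ on each strip, use smoothness of $f|_S$ together with Lemma~\ref{lem:h} for regularity, and combine the cone expansion estimate from Theorem~\ref{thm:global} with a uniform lower bound $c$ on $|Dh|$ to obtain $|D\bar f|\ge c\lambda_1^{-1}$. The paper justifies the lower bound on $|Dh|$ simply by invoking finiteness of the collection $\{\Sigma_{y_j}\}$; your explicit discussion of why $c$ is independent of $T_1$---because the holonomies live entirely within the fixed cross-sections, between $u$-curves whose angles with stable leaves are controlled by the fixed cone field---is a welcome clarification of a point the paper leaves implicit.
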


\begin{proof} Since $f$ is smooth on smooth strips and the holonomies 
$h:f(S\cap \gamma_j)\to \gamma_k$ are 
$C^{1+\eps}$ by Lemma~\ref{lem:h}, it follows that $\bar f$ is piecewise $C^{1+\eps}$.
The collection of intervals $S\cap\gamma_j$ is finite, so $\bar f$ has finitely many branches.  By finiteness of the collection $\{\Sigma_{y_j}\}$, there is a constant $c>0$ such that all the holonomies $h$ considered above satisfy $|Dh|\ge c$.
Hence taking $\lambda_1$ sufficiently small in Theorem~\ref{thm:global}, we can ensure that $|D\bar f|$ is as large as desired.
\end{proof}

\section{Statistical properties for $\bar f$ and $f$}
\label{sec:stat}

In this section, we investigate statistical properties for the 
$(d_s+1)$-dimensional Poincar\'e map
$f:X'\to X$ and the one-dimensional quotient map
$\bar f:\bX'\to\bX$.
Define $\pi:X\to\bX$ H\"older by letting $\pi(x)$ be the point where $W^s_x(X)$ intersects $\bX$.  Then $\pi$ defines a semiconjugacy between $f$ and $\bar f$.

From now on, we write $f:X\to X$ and $\bar f:\bX\to\bX$ with the understanding that $f$ and $\bar f$ are not defined everywhere (and are piecewise smooth where defined).

\subsection{Spectral decomposition and physical measures}
\label{sec:spectral}

\begin{prop} \label{prop:spectral}
There exists a finite number of 
ergodic absolutely continuous $\bar f$-invariant probability measures $\bar\mu_1,\dots,\bar\mu_s$ whose basins cover a subset of $\bX$ of full Lebesgue measure.   For each $j$, the density $d\bar\mu_j/d\Leb$ lies in $L^\infty$
and $\Int\supp\bar\mu_j\neq\emptyset$.
\end{prop}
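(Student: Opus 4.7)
The plan is to realize $\bar f$ as a piecewise $C^{1+\eps}$ uniformly expanding map of a compact one-dimensional space and invoke the classical spectral theory for the Perron--Frobenius (transfer) operator acting on functions of bounded variation, in the style of Lasota--Yorke and Rychlik.

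By Corollary~\ref{cor:barf}, after parameterizing each arc $\Cl\gamma_j$ by arclength, $\bX$ becomes (up to a null set) a finite disjoint union of closed intervals carrying Lebesgue measure $\Leb$, and $\bar f$ becomes a piecewise monotone map with finitely many $C^{1+\eps}$ branches $\bar f|_{I_k}\colon I_k\to\bX$ satisfying $\inf|D\bar f|\ge 2$ and $\sup_k |1/D\bar f|_{C^\eps(I_k)}<\infty$. First I would verify a one-step Lasota--Yorke inequality
\begin{equation*}
\|L\phi\|_{\mathrm{BV}} \le \theta\|\phi\|_{\mathrm{BV}} + K\|\phi\|_{L^1}
\end{equation*}
for the transfer operator $L\phi(y)=\sum_{\bar fx=y}\phi(x)/|D\bar f(x)|$ acting on functions of bounded variation on $\bX$, with some $\theta\in(0,1)$ and $K>0$; the H\"older distortion estimate together with the bound $|D\bar f|\ge2$ make the standard computation go through without having to pass to an iterate.

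Second, I would apply the Ionescu-Tulcea--Marinescu theorem to conclude that $L$ is quasi-compact on the BV space, so its peripheral spectrum consists of finitely many semisimple eigenvalues of finite multiplicity. Rychlik's analysis then produces finitely many ergodic absolutely continuous $\bar f$-invariant probability measures $\bar\mu_1,\dots,\bar\mu_s$ whose densities $d\bar\mu_j/d\Leb$ lie in $\mathrm{BV}\subset L^\infty$, and the complement of the union of their basins has Lebesgue measure zero. For the final claim, any nonnegative nonzero BV function on $\bX$ is bounded below by a positive constant on some nondegenerate closed subinterval $I\subset\bX$, and such an $I$ lies in $\Int\supp\bar\mu_j$.

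The main technical point will be the one-step Lasota--Yorke inequality. In the classical formulation one often has to iterate $\bar f$ to beat the contribution of the endpoints of the monotonicity intervals to $\operatorname{Var}(L\phi)$; here, however, the bound $|D\bar f|\ge2$ in Corollary~\ref{cor:barf} was arranged precisely to exceed the threshold $1$ needed to absorb this boundary contribution, so the one-step argument suffices without further work and the rest of the proof is routine piecewise-expanding-map theory.
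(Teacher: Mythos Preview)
Your overall strategy---quasi-compactness of the transfer operator via a Lasota--Yorke inequality---is exactly the one the paper uses. The paper simply cites Keller's 1985 spectral result for piecewise monotone maps and then Saussol for the nonempty interior of support, so at the level of ideas there is no difference.

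There is, however, a genuine technical gap in your choice of function space. Corollary~\ref{cor:barf} only gives that $\bar f$ is piecewise $C^{1+\eps}$, so the weight $g=1/|D\bar f|$ is merely $C^\eps$ on each branch, and H\"older functions with exponent $\eps<1$ need not be of bounded variation (think of a Weierstrass-type function). In particular, already for $\phi\equiv1$ one has $L\phi=\sum_i (g\circ \bar f_i^{-1})\,1_{J_i}$, which is piecewise $C^\eps$ and in general \emph{not} BV; so $L$ does not even preserve $\mathrm{BV}$, and your inequality $\|L\phi\|_{\mathrm{BV}}\le\theta\|\phi\|_{\mathrm{BV}}+K\|\phi\|_{L^1}$ cannot hold. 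The classical Lasota--Yorke/Rychlik argument requires $g$ itself to be BV, which here is not guaranteed. This is precisely why the paper invokes Keller's generalized bounded variation space (bounded $p$-variation), which is designed to accommodate H\"older weights; once you work in that space the Lasota--Yorke inequality and quasi-compactness go through as you describe, and the densities lie in that space, hence in $L^\infty$.

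Two smaller points. First, even in the $C^2$ setting the standard one-step BV inequality has leading coefficient $2/\inf|D\bar f|$, so $|D\bar f|\ge2$ is borderline rather than strictly contracting; this is harmless because Corollary~\ref{cor:barf} lets you take $|D\bar f|$ as large as you like, but your explanation of the threshold is off. Second, your argument for $\Int\supp\bar\mu_j\neq\emptyset$ is correct in spirit and survives the passage to generalized BV: functions of bounded $p$-variation are regulated and continuous off a countable set, so a nonnegative nonzero density must be bounded below on some subinterval. This is essentially what the cited lemma of Saussol provides.
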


\begin{proof}
By Corollary~\ref{cor:barf}, $\bar f$ is a piecewise $C^{1+\eps}$ uniformly expanding one-dimensional map.
Hence, most of the result is immediate from~\cite[Theorem~3.3]{Keller85}.
We refer to~\cite[Lemma~3.1]{Saussol00}
for the fact that $\Int\supp\bar\mu_j\neq\emptyset$.
\end{proof}

\begin{cor} \label{cor:spectral}  There exists a 
finite number of 
ergodic $f$-invariant probability measures $\mu_1,\dots,\mu_s$ whose basins cover a subset of $X$ of full Lebesgue measure.   
Moreover, $\pi_*\mu_j=\bar\mu_j$ for each $j$.
\end{cor}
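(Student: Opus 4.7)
The plan is to lift each $\bar\mu_j$ from $\bX$ to $X$ through the semiconjugacy $\pi\circ f=\bar f\circ \pi$, exploiting the fact that stable leaves contract uniformly under iteration by $f$. Fix $j$ and choose a measurable section $\sigma:\bX\to \bX\hookrightarrow X$ of $\pi$ (for instance, the identity inclusion, since $\bX\subset X$). Set $\nu_j=\sigma_*\bar\mu_j$ and define
\[
\mu_j=\lim_{k\to\infty}\frac{1}{n_k}\sum_{i=0}^{n_k-1} f^i_*\nu_j
\]
along any weak-$*$ convergent subsequence. Standard Krylov--Bogolyubov gives $f$-invariance, and since $\pi\circ f=\bar f\circ\pi$,
\[
\pi_*(f^i_*\nu_j)=\bar f^i_*\pi_*\nu_j=\bar f^i_*\bar\mu_j=\bar\mu_j
\]
for every $i$, so $\pi_*\mu_j=\bar\mu_j$.

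Next I would establish that the lift is \emph{unique}: if $\mu$ is any $f$-invariant probability on $X$ with $\pi_*\mu=\bar\mu_j$, then $\mu=\mu_j$. This is the core ingredient and uses Proposition~\ref{prop:Ws}(a)(3): stable leaves contract at a uniform exponential rate under $f$ (via the roof function $\tau\geq T_1$). For any continuous $\varphi:X\to\R$ and $\epsilon>0$, choose $n$ large enough that $|\varphi(f^n x)-\varphi(f^n y)|<\epsilon$ whenever $\pi(x)=\pi(y)$; this is possible because $f^n$ maps the stable leaf $W^s_x(X)$ into an arbitrarily small set and $\varphi$ is uniformly continuous. Then $\varphi\circ f^n$ is within $\epsilon$ of a function of the form $\bar\varphi_n\circ\pi$ with $\bar\varphi_n$ continuous on $\bX$, and by $f$-invariance,
\[
\int_X\varphi\,d\mu=\int_X\varphi\circ f^n\,d\mu
\;=\;\int_{\bX}\bar\varphi_n\,d\bar\mu_j+O(\epsilon),
\]
which depends only on $\bar\mu_j$. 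Hence $\mu$ is determined. Ergodicity of $\mu_j$ follows immediately: any ergodic component of $\mu_j$ is an $f$-invariant probability whose projection is an invariant probability absolutely continuous with respect to $\bar\mu_j$, hence equal to $\bar\mu_j$ by ergodicity on $\bX$; by uniqueness of the lift, it equals $\mu_j$.

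It remains to show that the basins of the $\mu_j$ cover a full Lebesgue-measure subset of $X$, which is the main technical obstacle. I would argue as follows. By Proposition~\ref{prop:spectral}, the union $\bar B=\bigcup_j \bar B_j$ of the basins of the $\bar\mu_j$ has full Lebesgue measure in $\bX$. For any continuous $\varphi:X\to\R$, the asymptotic-constancy-along-stable-leaves argument above shows that the Birkhoff average of $\varphi$ under $f$ at $x\in X$ coincides (when it exists) with the Birkhoff average of a suitable function on $\bX$ at $\pi(x)$, so for each $x\in X$ with $\pi(x)\in\bar B_j$ the forward orbit of $x$ is generic for $\mu_j$; hence $B_j\supset\pi^{-1}(\bar B_j)$. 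To translate full Lebesgue measure in $\bX$ to full Lebesgue measure in $X$, I would apply the absolute continuity of the stable holonomy (Theorem~\ref{thm:H}) combined with a Fubini-type disintegration of Lebesgue measure on each cross-section $\Sigma_{y_j}$ along the stable foliation: the transverse factor is absolutely continuous with respect to Lebesgue on $\bX$ (with bounded Jacobian by Theorem~\ref{thm:H}), so $\pi^{-1}(\bar B)$ has full Lebesgue measure in $X$. Combined with the inclusion $B_j\supset \pi^{-1}(\bar B_j)$, this gives that $\bigcup_j B_j$ has full Lebesgue measure in $X$, completing the proof. The delicate step, where I expect to have to be careful, is precisely the disintegration of Lebesgue measure on $X$ along $\cW^s(X)$ so that Theorem~\ref{thm:H} actually applies to push a Lebesgue-null set on $\bX$ to a Lebesgue-null set on $X$.
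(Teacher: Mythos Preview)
Your argument is correct and is precisely the approach of~\cite[Sections~6.1--6.2]{APPV09} that the paper's proof defers to: lift each $\bar\mu_j$ via Krylov--Bogolyubov, use uniform contraction along stable leaves for uniqueness and ergodicity, and then pass full Lebesgue measure from $\bX$ to $X$ via absolute continuity of the stable holonomy.

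Two small points. First, for the Fubini step you should invoke Lemma~\ref{lem:h} (the cross-sectional holonomy $h:\gamma_0\to\gamma_1$ between $u$-curves is $C^{1+\eps}$) rather than Theorem~\ref{thm:H}; the latter concerns $d_{cu}$-dimensional transversals to the flow foliation $\cW^s$, whereas here you are inside the codimension-one cross-section $X$, and Lemma~\ref{lem:h} gives exactly what you need to foliate each $\Sigma_{y_j}$ by $u$-curves and transfer a Lebesgue-null set on $\bX$ to a Lebesgue-null set on $X$. Second, in the Krylov--Bogolyubov step $f$ is discontinuous on $\Gamma$, so you need to check that any weak-$*$ limit gives zero mass to~$\Gamma$; this follows because $\pi_*(f^i_*\nu_j)=\bar\mu_j$ for all $i$, $\pi(\Gamma)$ is a finite set by Proposition~\ref{prop:Gamma}, and $\bar\mu_j\ll\Leb$.
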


\begin{proof}
This follows from the existence of the stable foliation $\cW^s(X)$ (here, the fact that it is a topological foliation suffices) combined with Proposition~\ref{prop:spectral}.
For details, see~\cite[Sections~6.1 and~6.2]{APPV09}.
\end{proof}

\subsection{Existence of an inducing scheme}
\label{sec:induce}

In this subsection, we suppose without loss that there is a unique absolutely continuous $\bar f$-invariant measure $\bar\mu$ in Proposition~\ref{prop:spectral}
(so $s=1$).

\begin{prop} \label{prop:barf}
There exists $k\ge1$ such that 
$\supp\bar\mu=\bX_1\cup\cdots\cup\bX_k$ 
where the sets $\bX_j$ are permuted cyclically by $\bar f$, and 
$\bar f^k:\bX_j\to\bX_j$ is mixing for each $j$.

Moreover, for any $\eta\in(0,1)$, there exist constants $c,\,C>0$ such that
\[
\Big|\int_{\bX_j} v\,w\circ \bar f^{kn}\,d\bar\mu- \int_{\bX_j} v\,d\bar\mu
\int_{\bX_j} w\,d\bar\mu\Big|\le C\|v\|_{C^\eta}|w|_1 e^{-cn}\quad\text{for all $n\ge1$, $j=1,\dots,k$},
\]
for all  $v\in C^\eta(\bX)$ and $w\in L^1(\bX)$.
\end{prop}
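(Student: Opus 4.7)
The plan is to invoke the classical spectral theory of piecewise $C^{1+\eps}$ expanding one-dimensional maps. By Corollary~\ref{cor:barf}, $\bar f$ consists of finitely many monotone $C^{1+\eps}$ branches with $|D\bar f|\ge 2$, so the Lasota--Yorke inequality gives quasi-compactness of the transfer operator $L$ (with respect to Lebesgue) on an appropriate Banach space $\mathcal{B}$; in one dimension one may take $\mathcal{B}$ to be $BV(\bX)$ or Saussol's quasi-H\"older space $V_\alpha$ for some $\alpha\in(0,\eta]$. Both options are standard and yield a finite-dimensional peripheral spectrum consisting of semisimple eigenvalues on the unit circle.

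For the cyclic decomposition, ergodicity of $\bar\mu$ (which follows from the uniqueness assumption) forces the eigenvalue $1$ of $L$ to be simple with eigenfunction $h = d\bar\mu/d\Leb \in L^\infty$. The remaining peripheral eigenvalues must be roots of unity and in fact form the group $\{e^{2\pi ij/k}:j=0,\ldots,k-1\}$ for some $k\ge1$. The corresponding eigenfunctions are of the form $h\sum_j\omega^j\mathbf{1}_{\bX_j}$ with $\omega^k=1$, which identifies a partition $\supp\bar\mu = \bX_1\cup\cdots\cup\bX_k$ cyclically permuted by $\bar f$. On each $\bX_j$, the peripheral spectrum of $L^k$ (restricted to functions supported on $\bX_j$) reduces to the simple eigenvalue $1$, giving mixing of $\bar f^k:\bX_j\to\bX_j$.

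For the decay of correlations, work on a single $\bX_j$ and use transfer operator duality to write
\[
\int_{\bX_j} v\, w\circ\bar f^{kn}\,d\bar\mu - \int_{\bX_j} v\,d\bar\mu\int_{\bX_j} w\,d\bar\mu
= \int_{\bX_j} L^{kn}\phi\cdot w\,d\Leb,
\]
where $\phi = vh - \bar\mu(v)h$ lies in the complement of the eigenspace spanned by $h$. The spectral gap gives $\|L^{kn}\phi\|_\mathcal{B}\le Ce^{-cn}\|\phi\|_\mathcal{B}$; since $\mathcal{B}\hookrightarrow L^\infty$, pairing with $w\in L^1$ yields the claimed bound provided $\|\phi\|_\mathcal{B} \le C'\|v\|_{C^\eta}$.

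The main obstacle is the mismatch between the spaces: $C^\eta$ does not embed into $BV$ in dimension one (H\"older continuity permits unbounded variation), so $\mathcal{B}=BV$ is too small to pair directly with H\"older observables. The resolution is to take $\mathcal{B}$ to be Saussol's quasi-H\"older space $V_\alpha$ for some $\alpha\le\eta$: this space contains $C^\eta(\bX)$, contains $h$, is an algebra so that multiplication by $h$ is bounded, and admits a spectral gap for the transfer operator of piecewise $C^{1+\eps}$ expanding one-dimensional maps. Once this choice is in place, the argument reduces to invoking the standard references~\cite{Keller85,Saussol00}.
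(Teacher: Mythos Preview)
Your approach is essentially the same as the paper's: both reduce the result to quasi-compactness of the transfer operator for the piecewise $C^{1+\eps}$ expanding map $\bar f$ acting on a Banach space large enough to contain $C^\eta$ observables yet embedding in $L^\infty$. The only difference is the choice of space: the paper invokes Keller's generalized bounded variation ($\eta$-variation) spaces from~\cite{Keller85}, which are designed precisely so that $C^\eta$ functions have finite $\eta$-variation, whereas you route through Saussol's quasi-H\"older space $V_\alpha$ from~\cite{Saussol00}. In one dimension these are closely related constructions (indeed Saussol's space is the multidimensional generalisation of Keller's idea), so either reference suffices; the paper's choice of~\cite{Keller85} is the more direct one here since the map is one-dimensional and Keller's result is stated exactly in the form needed.
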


\begin{proof} This is immediate from the quasicompactness of
  the transfer operator for $\bar f$ which is established
  in~\cite[Theorem~3.3]{Keller85}.  Indeed the result
  in~\cite{Keller85} is proved for the class of
  functions with finite $\eta$-variation (for all $\eta>0$
  sufficiently small).  This includes observables that are
  $C^\eta$.
\end{proof}

For ease of exposition, we suppose for the remainder of this
subsection that $k=1$ and $\bX_1=\bX$.
Recall that a one-dimensional map $\bF:\bY\to\bY$ is a full branch Gibbs-Markov map if
there is an at most
  countable partition $\alpha$ of $\bY$ and constants
  $C>0$, $\eps\in(0,1]$ such that for all $a\in\alpha$,
\begin{itemize}
\item $\bF|_a:a\to\bY$ is a measurable bijection, and
\item $\big|\log |D\bF(y_1)|-\log |D\bF(y_2)|\big|\le C|\bF y_1-\bF y_2|^\eps$ for
  all $y_1,y_2\in a$.
\end{itemize}

\begin{lemma} \label{lem:AFLV}
For all $\beta>0$, there exists a positive
  measure subset $\bY\subset \bX$ and 
a full branch Gibbs-Markov induced map $\bF=\bar f^\rho:\bY\to \bY$, where $\rho:\bY\to\Z^+$ is constant on partition elements and satisfies $\Leb(y\in \bY:\rho(y)>n)=O(n^{-\beta})$.
\end{lemma}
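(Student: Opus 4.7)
The plan is to build a Young-tower style induced map $\bF=\bar f^\rho:\bY\to\bY$ for a suitable reference interval $\bY\subset\bX$, exploiting the uniform expansion $|D\bar f|\ge 2$ and piecewise $C^{1+\eps}$ regularity from Corollary~\ref{cor:barf} together with the exponential mixing in Proposition~\ref{prop:barf}. Since the lemma only requires $O(n^{-\beta})$ tails for arbitrary $\beta>0$, it suffices to produce exponential tails, which this construction naturally yields. By treating the cyclic components separately, I may assume $k=1$, so $\bar f$ itself is mixing with respect to $\bar\mu$.

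For the reference set, pick a nondegenerate closed interval $\bY\subset\Int\supp\bar\mu$, small enough to lie inside a single smooth branch of $\bar f$. Let $\mathcal{P}_n$ denote the partition of $\bX$ into maximal intervals on which $\bar f^n$ is smooth and monotone. Call $n\ge 1$ a full-branch return time of $y\in\bY$ if there exists $C\in\mathcal{P}_n$ with $y\in C$, $\bar f^n C\supset\bY$, and $\bar f^n y\in\bY$; let $\rho(y)$ be the smallest such $n$. Take $\alpha$ to be the countable partition of $\bY$ into the sets $C\cap\bar f^{-n}\bY\cap\bY$ obtained at the first full-branch return. By construction, $\bF|_a:a\to\bY$ is a bijection for each $a\in\alpha$.

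For the Gibbs--Markov distortion bound, the $\eps$-H\"older control on $\log|D\bar f|$ combined with the geometric contraction $|\bar f^i y_1-\bar f^i y_2|\le 2^{-(n-i)}|\bar f^n y_1-\bar f^n y_2|$ inside monotonicity cylinders telescopes to give
\[
\bigl|\log|D\bar f^n(y_1)|-\log|D\bar f^n(y_2)|\bigr|\le C|\bar f^n y_1-\bar f^n y_2|^\eps
\]
for any $y_1,y_2$ in a common element of $\mathcal{P}_n$, with $C$ independent of $n$. Applied with $n=\rho$, this is exactly the Gibbs--Markov distortion inequality for $\bF$.

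The main obstacle is the tail bound, since the first return to $\bY$ need not be a full-branch return when the return cylinder is truncated by $\partial\bY$. I would control this as follows: the set $\{\rho>n\}$ is a union of monotonicity cylinders of $\bar f^n$ that cross $\bY$ but whose $\bar f^n$-image does not contain all of $\bY$. Using the bounded distortion on $\mathcal{P}_n$ and the exponential decay of correlations from Proposition~\ref{prop:barf} applied to $v=\mathbf{1}_\bY$ and a H\"older approximation of $\chi_{\{\rho>n\}}$, a definite fraction of the mass currently carried by $\{\rho>n\}$ completes a full-branch return in each subsequent block of $n_0$ iterates, where $n_0$ is the mixing time chosen so that $\bar f^{n_0}\bY\supset\bY$. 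Iterating yields $\Leb(\rho>n)\le Ce^{-cn}$, comfortably stronger than the required $O(n^{-\beta})$.
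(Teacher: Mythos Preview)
Your distortion argument has a genuine gap. You claim ``$\eps$-H\"older control on $\log|D\bar f|$'' and telescope to obtain uniform bounded distortion on every monotonicity cylinder. But Corollary~\ref{cor:barf} only asserts $C^{1+\eps}$ regularity on each \emph{open} branch; it does not say the H\"older constant is uniform, nor that $D\bar f$ remains bounded at the branch endpoints. In fact it does not: near a point $x_0\in\cS$ corresponding to a Lorenz-like equilibrium one has $|D\bar f(x)|\asymp |x-x_0|^{\omega-1}$ with $\omega=-\lambda^s/\lambda^u\in(0,1)$, so $|D\bar f|\to\infty$ and $\log|D\bar f|$ has a logarithmic pole. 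Consequently the telescoped sum $\sum_{i<n}\bigl|\log|D\bar f(\bar f^i y_1)|-\log|D\bar f(\bar f^i y_2)|\bigr|$ is not dominated by a geometric series with a uniform constant: the $i$-th summand carries a factor comparable to $d(\bar f^i y_1,\cS)^{-q}$ (this is exactly condition~(C2) in the Appendix), and nothing in your construction prevents the orbit from visiting arbitrarily small neighborhoods of~$\cS$. Without bounded distortion on the candidate cylinders, neither the Gibbs--Markov inequality nor the ``definite fraction returns'' step in your tail argument is justified.

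The paper's proof does not attempt a direct tower construction. It invokes the Alves--Freitas--Luzzatto--Vaienti result (Theorem~\ref{thm:AFLV}), which is built precisely for nonuniformly expanding maps with derivative singularities of type (C0)--(C3). That machinery uses \emph{hyperbolic times} to select return times at which the accumulated approach to~$\cS$ is quantitatively controlled, producing the estimate~\eqref{eq:hyptime} that bounds $\sum_{\ell<\rho} d(\bar f^\ell y,\bar f^\ell y')^\eta\, d(\bar f^\ell y,\cS)^{-q}$ and hence the Gibbs--Markov distortion despite the blow-up of $D\bar f$. Most of the paper's proof is then the verification of (C1) and (C2) near Lorenz-like singularities, via $C^{1+\eps}$-linearization of the flow restricted to a two-dimensional center-unstable manifold (using Newhouse's theorem when global linearization is unavailable). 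Your approach would be correct for a piecewise expanding map with $D\bar f$ bounded above and below and globally H\"older $\log|D\bar f|$---for instance in the equilibrium-free Axiom~A case---but that hypothesis fails in the singular hyperbolic setting.
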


\begin{proof}
By Proposition~\ref{prop:spectral}, $\bar\mu$ is an ergodic absolutely continuous invariant probability measure on $\bX$ with $d\bar\mu/d\Leb\in L^\infty$.
  The result follows from~Theorem~\ref{thm:AFLV} provided we
  verify that $\bar\mu$ is expanding and that conditions (C0)--(C3) hold.  Let $\cS$ denote the finite
  set consisting of singularities/discontinuities of
  $\bar f$.  (In general $X\setminus\cS$ is a proper subset of $X'$ since $\cS$ includes the discontinuities of the piecewise smooth map $f$.)
Conditions~(C0) and (C3) are redundant since
  $\bar f$ is one-dimensional.  Conditions (C1) and (C2) become 
\begin{itemize}
\item[(C1)] $C^{-1}d(x,\cS)^q\le |D\bar f(x)|\le Cd(x,\cS)^{-q}$
for all $x\in\bX\setminus\cS$,
\item[(C2)]
  $\big|\log|D\bar f(x)|-\log|D\bar f(x')|\big|\le
C|x-x'|^\eta(|D\bar f(x)|^{-q}+|D\bar f(x)|^{q})$
for all $x,x'\in\bX\setminus\cS$ with $|x-x'|<\dist(x,\cS)/2$,
\end{itemize}
where $\eta\in(0,1)$ and $C,q>0$ are constants.
Since $d\bar\mu/d\Leb\in L^\infty$ 
it is immediate from (C1) that
$\log|(D\bar f)^{-1}|$ is integrable with respect to $\bar\mu$.
Also $\int\log|(D\bar f)^{-1}|\,d\bar\mu\le \log\frac12<0$ by Corollary~\ref{cor:barf},
so $\bar\mu$ is an expanding measure.

It remains to verify conditions (C1) and (C2).
Note that they are trivially satisfied for functions $\bar f$ with $D\bar f$ H\"older and bounded below.  Hence they are satisfied away from $\cS$ and also near all
discontinuity points in $\cS$.

By Proposition~\ref{prop:like}, it remains to consider
singularities $x_0\in X$ corresponding to Lorenz-like equilibria $\sigma$.
The Poincar\'e map $f$ can be written near $x_0$ as $f=h_1\circ g \circ h_2$ where $g$ corresponds to the flow near $\sigma$ and $h_1,\,h_2$ are the remaining parts of the Poincar\'e map.  In particular $D\bar h_j$ is H\"older and bounded below for $j=1,2$.

Suppose first that the flow is $C^{1+\eps}$-linearizable for some $\eps>0$
in a neighborhood of~$\sigma$.  
Incorporating the linearization into $h_1$ and $h_2$,
we can suppose without loss that the flow is linear in a neighborhood of $\sigma$. Hence
the flow is given by $x\mapsto e^{tA}x$
where $A=\lambda^u\oplus\lambda^s\oplus B$ 
with $-\lambda^u<\lambda^s<0<\lambda^u$
and $B=DG(\sigma)|E^s_\sigma$.
A standard calculation shows that in suitable coordinates,
\[
g(x,z)=(|x|^{-\lambda^s/\lambda^u},ze^{-\lambda_u^{-1}B\log|x|}).
\]
In particular, $\bar g(x)=|x|^\omega$ where
$\omega=-\lambda^s/\lambda^u\in(0,1)$.

Since $D\bar h_j$ is bounded above and below,
it follows from the chain rule that 
\[
|D\bar f(x)|\cong |D\bar g(\bar h_2x)|=\omega |\bar h_2x|^{\omega-1}\cong |x-x_0|^{\omega-1},
\]
so (C1) is satisfied.
Next,
\begin{align*}
\big|\log|D\bar f(x)|- & \log|  D\bar f(x')|\big|
 \le C_1\big( 
|x-x'|^\eps+ \big|\log|D\bar g(\bar h_2x)|-\log|D\bar g(\bar h_2x')|\big|
\\ & \qquad\qquad \qquad +|\bar g(\bar h_2x)-\bar g(\bar h_2x')|^\eps\big)
\\ & =
C_1\big(|x-x'|^\eps  
+(1-\omega)\big|\log|\bar h_2x|-\log|\bar h_2x'|\big|
+
\big||\bar h_2x|^\omega-|\bar h_2x'|^\omega\big|^{\eps}\big) .
\end{align*}
Now 
\begin{align*}
\big|\log|\bar h_2x|-\log|\bar h_2x'|\big|\le  C_2 (|\bar h_2x-\bar h_2x'|/|\bar h_2x|)^{1-\omega}
& \le C_2' |x-x'|^{1-\omega}|x-x_0|^{\omega-1}
\\ & \le C_2'' |x-x'|^{1-\omega}|D\bar f(x)|.
\end{align*}
Also without loss $|x-x_0|\le |x'-x_0|$, so
\begin{align*}
\big||\bar h_2x|^\omega-|\bar h_2x'|^\omega\big|
&\le |\bar h_2x-\bar h_2x'|(|\bar h_2x|^{\omega-1}+|\bar h_2x'|^{\omega-1})
\\ & \le C_3 |x-x'||x-x_0|^{\omega-1} \le C_3' |x-x'||D\bar f(x)|.
\end{align*}
Hence, there exists $\eta\in(0,1)$ such that
\begin{align*}
\big|\log|D\bar f(x)|-\log|  D\bar f(x')|\big|
 & \le C_4 |x-x'|^\eta(|D\bar f(x)|+1)
 \\ & \le C_4 |x-x'|^\eta(2|D\bar f(x)|+|D\bar f(x)|^{-1}),
\end{align*}
verifying (C2).

To complete the proof, we remove the assumption that the flow near $\sigma$ 
is $C^{1+\eps}$-linearizable.  By the center manifold theorem (eg.~\cite[Theorem~5.1]{HPS77}), locally we can choose a flow-invariant $C^{1+\eps}$
two-dimensional manifold $W$ tangent to $E^{cu}_\sigma$ (for some $\eps>0$).
Note that the quotient of $g|W$ coincides with $\bar g$.
By a result of
Newhouse~\cite{Newhouse17} (stated previously but without proof in~\cite{Hartman60}),
the flow restricted to $W$ (being two-dimensional) can be
 $C^{1+\eps'}$ linearized for some $\eps'>0$.  
The proof now proceeds as before.
\end{proof}

\begin{rmk}  \label{rmk:exp}
Since we have exponential decay of correlations in Proposition~\ref{prop:barf}, there is the hope of obtaining an induced Gibbs-Markov map as in
Lemma~\ref{lem:AFLV} but with exponential tails for $\rho$.  (We note that
Theorem~\ref{thm:AFLV}(2) which would give stretched exponential tails does not apply because the density $d\bar\mu/d\Leb$ is not bounded below.)
In certain situations, it is possible to construct an inducing scheme
with exponential tails by using different methods, controlling
the tail of hyperbolic times and relating this with the tail of
inducing times more directly~\cite{Gouezel06,Araujo07,AST}.
One repercussion of the existence of such an inducing scheme would be that
%the error rate in the scalar ASIP in Theorem~\ref{thm:statf} below would be improved to $n^\eps$ for $\eps>0$ arbitrarily small~\cite{Korepanovsub}.
%In addition, 
the error rate in the vector-valued  ASIP would be improved to 
$n^{\frac14+\eps}$ for $\eps>0$ arbitrarily small~\cite{Gouezel10}.

However, our construction here with superpolynomial tails holds in complete generality and suffices for our 
results on singular hyperbolic flows in Section~\ref{sec:SH}, so we do not pursue this further.  
\end{rmk}

\begin{prop} \label{prop:tau}
There is a constant $C>0$ such that
\[
\textstyle{\sum_{\ell=0}^{\rho(y)-1}}|\tau(f^\ell y)-\tau(f^\ell y')|\le C|\bF y-\bF y'|^\eps
\quad \text{for all $y,y'\in a$, $a\in\alpha$.}
\]
\end{prop}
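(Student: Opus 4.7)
The plan is to combine backward contraction of $\bar f$ on a partition element $a$ with a quantitative H\"older bound for $\tau$ on smooth strips, using the logarithmic control $\tau(x)\le -C\log\dist(x,\Gamma_0)$ from Lemma~\ref{lem:log}.

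First I would exploit the uniform expansion in Corollary~\ref{cor:barf}: since $\bF|_a=\bar f^{\rho(y)}|_a:a\to\bY$ is a bijection with $|D\bar f|\ge 2$, the inverse branches of $\bar f^{\rho(y)-\ell}$ contract and
\[
|\bar f^\ell y-\bar f^\ell y'|\le 2^{-(\rho(y)-\ell)}|\bF y-\bF y'|,\qquad 0\le\ell\le\rho(y).
\]
Because $f$ is uniformly hyperbolic on $X$ (Theorem~\ref{thm:global}) with $f$-invariant stable foliation $\cW^s(X)$ (Proposition~\ref{prop:invf}), tracking the stable contraction as well gives the analogous geometric bound $d_X(f^\ell y,f^\ell y')\le C'\lambda^{\rho(y)-\ell}|\bF y-\bF y'|$ for some $\lambda\in(0,1)$.

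Second, on each smooth strip $S$, $\tau|_S$ is smooth with derivative of order $\dist(\cdot,\Gamma_0)^{-1}$, as can be read off from the Hartman–Grobman normal form used in the proof of Lemma~\ref{lem:log}. Combined with $\tau(x)\le -C\log\dist(x,\Gamma_0)$, interpolation produces a H\"older estimate $|\tau(x)-\tau(x')|\le C_0|x-x'|^{\eps_0}$ (for any fixed $\eps_0\in(0,1)$) valid on portions of $S$ at distance bounded below from $\Gamma_0$. Combining with the first step then gives
\[
|\tau(f^\ell y)-\tau(f^\ell y')|\le C_1\lambda_1^{\rho(y)-\ell}|\bF y-\bF y'|^{\eps_0},
\]
with $C_1=C_0(C')^{\eps_0}$ and $\lambda_1=\lambda^{\eps_0}\in(0,1)$, and summing the geometric series yields the proposition with $\eps=\eps_0$.

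The main obstacle is uniformity in the H\"older bound of the second step, since both $\tau$ and $|D\tau|$ are unbounded near $\Gamma_0$ and the iterate $f^\ell y$ may approach $\Gamma_0$ closely. The resolution is that the inducing construction of Lemma~\ref{lem:AFLV} controls orbit geometry on partition elements: deep excursions close to $\Gamma_0$ produce correspondingly large values of $\rho(y)-\ell$, so the potentially singular factor $\dist(f^\ell y,\Gamma_0)^{-1}$ arising from the derivative of $\tau$ is dominated by the exponential backward-contraction gain $\lambda^{\rho(y)-\ell}$. Making this trade-off precise by combining the Hartman–Grobman normal form near the Lorenz-like equilibrium (Proposition~\ref{prop:like}) with the hyperbolic-time structure built into Theorem~\ref{thm:AFLV} is the core technical step.
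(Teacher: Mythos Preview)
Your outline is in the right spirit but is both more circuitous than necessary and leaves the decisive step unproved. The paper's proof is essentially a two-line application of estimate~\eqref{eq:hyptime} from Theorem~\ref{thm:AFLV}, which is singled out precisely for this purpose (Remark~\ref{rmk:fF}). From the $C^{1+\eps}$ linearization in the proof of Lemma~\ref{lem:AFLV} one has $\tau(x)=-\lambda_u^{-1}\log|\bar h_2 x|+t(x)$ with $\bar h_2,t\in C^{1+\eps}$, giving the weighted Lipschitz bound
\[
|\tau(x)-\tau(x')|\le C_1\,|x-x'|\,d(x,\cS)^{-1}.
\]
Plugging this directly into~\eqref{eq:hyptime} (with $\eta=1$, $q=1$) yields the sum bound immediately. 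The ``trade-off'' you identify as the core technical step --- that proximity to $\Gamma_0$ is compensated by backward contraction along the inducing orbit --- is exactly the content of~\eqref{eq:hyptime}, which comes from the hyperbolic-times machinery in~\cite{AlvesLuzzattoPinheiro05}; you do not need to rederive it.

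Two further points. First, Hartman--Grobman (as invoked in Lemma~\ref{lem:log}) is only a topological conjugacy and does not give the derivative control $|D\tau|\sim\dist(\cdot,\Gamma_0)^{-1}$ you claim; one needs the $C^{1+\eps}$ linearization on the two-dimensional center-unstable manifold via Newhouse~\cite{Newhouse17}, as in the proof of Lemma~\ref{lem:AFLV}. Second, your heuristic ``deep excursions close to $\Gamma_0$ produce correspondingly large values of $\rho(y)-\ell$'' is not obviously true as stated (the orbit could approach $\Gamma_0$ near the end of the inducing block); what is true, and what~\eqref{eq:hyptime} encodes, is the slow-recurrence condition built into the definition of hyperbolic times, which bounds $d(f^\ell y,\cS)^{-q}$ against the contraction gained over the remaining $\rho(y)-\ell$ steps.
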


\begin{proof}
It follows from the proof of Lemma~\ref{lem:AFLV} that the roof function
$\tau:X\to\R^+$ satisfies $\tau(x)=-\lambda_u^{-1}\log|\bar h_2x|+t(x)$
where $\bar h_2$ and $t$ are $C^{1+\eps}$.
Hence
\[
|\tau(x)-\tau(x')|\le \lambda_u^{-1} |\bar h_2 x-\bar h_2 x'|/|\bar h_2x|+|Dt|_\infty|x-x'|
\le C_1 |x-x'|d(x,\cS)^{-1},
\]
and the result follows from~\eqref{eq:hyptime}.
\end{proof}

\subsection{Statistical limit laws for the Poincar\'e map}

By Corollary~\ref{cor:spectral}, there is a unique ergodic
$f$-invariant probability measure $\mu$ on $X$ corresponding
to $\bar\mu$, with $\pi_*\mu=\bar\mu$.

\begin{thm} \label{thm:statf}   
Fix $\eta\in(0,1)$ and let $v\in C^\eta(X)$ with $\int_X v\,d\mu=0$.
Write $v_n=\sum_{j=0}^{n-1}v\circ f^j$.
Then the limit
$\sigma^2=\lim_{n\to\infty}n^{-1}\int_\Lambda v_n^2\,d\mu$ exists. Suppose that $\sigma^2>0$.  Then the following limit laws hold.

\begin{description}[style=unboxed,leftmargin=0cm]
\item[ASIP~\cite{CunyMerlevede15}]
Let $\eps>0$. There exists a probability space $\Omega$ supporting a sequence of random variables $\{S_n,\,n\ge1\}$ with the same joint distributions
as $\{v_n,\,n\ge1\}$,
and a sequence $\{Z_n,\,n\ge1\}$ of i.i.d.\ random variables with distribution $N(0,\sigma^2)$, such that 
\[
\textstyle \sup_{1\le k\le n} \big|S_k-\sum_{j=1}^k Z_j\big|=
O(n^\eps)
% O\big(n^{1/4}(\log n)^{1/2}(\log\log n)^{1/4}\big)
\;\text{a.e.\ as $n\to\infty$.}
\]
\item[Berry-Esseen~\cite{Gouezel05}]
There exists $C>0$ such that
\[
\big|\mu\{x\in X: n^{-1/2}v_n(x)\le a\}-\P\{N(0,\sigma^2)\le a\}\big|\le Cn^{-1/2}
 \quad\text{for all $a\in\R$, $n\ge1$.}
\]
\item[local limit theorem~\cite{Gouezel05}]
Suppose that $v$ is aperiodic (so it is not possible to write $v=c+g-g\circ f+\lambda q$ where $c\in\R$, $\lambda>0$, $g:X\to\R$ measurable and $q:X\to\Z$). 
Then for any bounded interval $J\subset\R$,
\[
\lim_{n\to\infty} n^{1/2}\mu(x\in X: v_n(x)\in J)=(2\pi\sigma^2)^{-1/2}|J|.
\]
\end{description}

For $C^\eta$ vector-valued observables $v:X\to\R^d$ with
$\int_X v\,d\mu=0$, the limit
$\Sigma=\lim_{n\to\infty}n^{-1}\int_\Lambda v_n v_n^T\,d\mu\in\R^{d\times d}$ exists and we obtain
\begin{description}[style=unboxed,leftmargin=0cm]
\item[vector-valued ASIP~\cite{MN09,Korepanovapp}]
There exists $\lambda\in(0,\frac12)$ and a probability space $\Omega$ supporting a sequence of random variables $\{S_n,\,n\ge1\}$ with the same joint distributions
as \mbox{$\{v_n,\,n\ge1\}$},
and a sequence $\{Z_n,\,n\ge1\}$ of i.i.d.\ random variables with distribution $N(0,\Sigma)$, such that 
\[
\textstyle \sup_{1\le k\le n} \big|S_k-\sum_{j=1}^k Z_j\big|=
O(n^{\lambda})
\;\text{a.e.\ as $n\to\infty$.}
\]
\end{description}
\end{thm}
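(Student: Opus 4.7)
The plan is to reduce each of the stated limit laws to the corresponding statement for $\bar v_n$ under $\bar\mu$ on the one-dimensional quotient $(\bX,\bar f)$ via the semiconjugacy $\pi$, and then apply the abstract theorems cited in the statement using the Gibbs--Markov inducing scheme of Lemma~\ref{lem:AFLV}.

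For the reduction step I would use the standard Bowen--Sinai cohomology argument. Fix a H\"older measurable section $s:\bX\to X$ of $\pi$ --- for example by choosing one of the transversals $\gamma_j$ used to build $\bX$ in Section~\ref{sec:barf} --- and set $\bar v=v\circ s$. Define
\[
\chi(x)=\sum_{n=0}^\infty \bigl(v(f^n x)-v(f^n s(\pi x))\bigr).
\]
Since $x$ and $s(\pi x)$ lie in the same stable leaf $W^s_x(X)$, and $f$ contracts stable leaves at a uniform exponential rate by~\eqref{eq:delta} and Proposition~\ref{prop:invf}, H\"older regularity of $v$ makes the series absolutely convergent with a H\"older limit $\chi$. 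Then $v=\bar v\circ\pi+\chi\circ f-\chi$, so $v_n=\bar v_n\circ\pi+\chi-\chi\circ f^n$; boundedness of $\chi$ turns each statistical limit law for $v_n$ under $\mu$ (including the variance/covariance asymptotics) into the corresponding one for $\bar v_n$ under $\bar\mu=\pi_*\mu$, and the argument applies componentwise to vector-valued $v$.

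For the quotient system I would apply Lemma~\ref{lem:AFLV} with $\beta$ large (say $\beta>2$) to obtain a full-branch Gibbs--Markov induced map $\bF=\bar f^{\,\rho}:\bY\to\bY$ with $\Leb(\rho>n)=O(n^{-\beta})$. This is precisely the setting of~\cite{CunyMerlevede15,Gouezel05,MN09,Korepanovapp}, provided the induced observable $V=\sum_{k=0}^{\rho-1}\bar v\circ\bar f^{\,k}$ is piecewise H\"older on the partition $\alpha$ of $\bY$. Such a regularity estimate is the exact analogue of Proposition~\ref{prop:tau}: on each $a\in\alpha$, uniform expansion forces $d(\bar f^{\,k}y,\bar f^{\,k}y')$ to decay geometrically in $k$ starting from $|\bF y-\bF y'|$, yielding a uniform bound $\sum_{k=0}^{\rho-1}|\bar v(\bar f^{\,k}y)-\bar v(\bar f^{\,k}y')|\le C|\bF y-\bF y'|^\eta$. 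Unfolding the cited abstract theorems then delivers each of the four limit laws for $\bar v_n$, and hence for $v_n$ by the first step.

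The principal obstacle is likely to be the Berry--Esseen estimate and the local limit theorem, which require spectral information about the transfer operator of $\bF$ beyond polynomial tail control. Quasi-compactness for $\bar f$ on $C^\eta(\bX)$ is already supplied by Proposition~\ref{prop:barf} via Keller's theorem, and a standard tower argument passes this spectral gap to $\bF$. Aperiodicity for $\bar v$ inherits from aperiodicity of $v$ through the cohomological identity above --- any decomposition $\bar v=c+g-g\circ\bar f+\lambda q$ would pull back via $\pi$ (after absorbing $\chi$) to contradict aperiodicity of $v$ --- and the non-degeneracy $\sigma^2>0$ (respectively $\Sigma>0$) transfers equally, since the reduction from $v_n$ to $\bar v_n\circ\pi$ changes the partial sums only by a bounded coboundary.
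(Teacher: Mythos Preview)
Your overall strategy---collapse to the quotient via a Sinai--Bowen coboundary and then invoke the Gibbs--Markov inducing scheme of Lemma~\ref{lem:AFLV}---is the same as the paper's, but the coboundary step as you wrote it is incorrect. With $\chi(x)=\sum_{n\ge0}\bigl(v(f^nx)-v(f^ns(\pi x))\bigr)$ one computes
\[
v+\chi\circ f-\chi=\sum_{n\ge0}\bigl(v(f^ns(\pi x))-v(f^ns(\bar f\pi x))\bigr),
\]
which does factor through $\pi$, but it is \emph{not} $v\circ s\circ\pi$. The point is that the section is not equivariant: $f\circ s\neq s\circ\bar f$, so the two ``future'' orbits $\{f^ns(\pi x)\}$ and $\{f^ns(\pi fx)\}$ do not telescope against each other. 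Concretely, for the baker map with $v(x,y)=y$ and $s(\bar x)=(\bar x,0)$ one gets $\chi(x,y)=2y$ and $v+\chi\circ f-\chi=\lfloor 2x\rfloor$, whereas $v\circ s\equiv0$; the two candidate quotient observables even have different means. Thus your partial-sum identity $v_n=(v\circ s)_n\circ\pi+O(1)$ fails, and limit laws for $v\circ s$ say nothing about $v$. (There is also a sign slip: the identity is $v=\bar v\circ\pi+\chi-\chi\circ f$, not the reverse.)

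The paper deals with this by carrying out the Sinai trick on the two-sided Young tower $\Delta$ rather than on $X$: it defines $\hat v=v\circ\pi_\Delta-\chi\circ f_\Delta+\chi$, checks that $\hat v$ is constant along fibres of $\pi:\Delta\to\bar\Delta$, and then proves directly that the resulting $\bar v$ on the one-sided tower $\bar\Delta$ satisfies $\|\bar v\|_\theta<\infty$ in the symbolic metric $d_\theta$ (the $A_N/B_N$ splitting argument). This is the substantive estimate, and it is exactly where your proposal is silent: the correct quotient observable $\bar v=v\circ s+\psi$ with $\psi(\bar y)=\sum_{n\ge0}\bigl(v(f^{n+1}s(\bar y))-v(f^ns(\bar f\bar y))\bigr)$ is only piecewise H\"older (with a halved exponent), and verifying this requires balancing the stable contraction in each summand against the unstable growth when comparing nearby $\bar y,\bar y'$---precisely the calculation the paper performs at the tower level. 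Once that regularity is established, your inducing step and appeal to the cited references go through as you describe.
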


\begin{proof}  
The strategy is to model $\bF:\bY\to\bY$ and $F:Y\to Y$ by ``one-sided'' and ``two-sided'' Young towers $\bar\Delta$ and $\Delta$, and to construct an observable $\bar v:\bar\Delta\to\R$ to which the various results in the references can be applied.  The desired statistical properties for $v$ are deduced from those for $\bar v$.

Using $\bF:\bY\to\bY$ and $\rho:\bY\to\Z^+$ as given in Lemma~\ref{lem:AFLV}, we 
define the one-sided Young tower map
$\bar f_\Delta:\bar\Delta\to\bar\Delta$,
\[
\bar\Delta=\{(y,\ell)\in \bY\times\Z^+:0\le\ell\le \rho(y)-1\},
\quad
\bar f_\Delta(y,\ell)=\begin{cases} (y,\ell+1) & \ell\le\rho(y)-2
\\ (\bF y,0) & \ell=\rho(y)-1 \end{cases}.
\]
Let $\bar\mu_Y$ denote the unique absolutely continuous invariant probability measure for the Gibbs-Markov map $\bF:\bY\to\bY$.
Then
$\bar\mu_\Delta=\bar\mu_Y\times{\rm counting}/\int_{\bar
  Y}\rho\,d\bar\mu_Y$
is an ergodic $\bar f_\Delta$-invariant probability measure on $\bar\Delta$.  

Next, define $Y=\pi^{-1}\bY\subset X$ to be the union of stable leaves $W^s_y(X)$ where $y\in \bY$.
In the proof of Corollary~\ref{cor:spectral}, we used
an argument from~\cite{APPV09} which constructs~$\mu$ on $X$ starting from $\bar\mu$ on $\bX$.  
The same argument constructs an ergodic $F$-invariant probability measure $\mu_Y$ on $Y$ starting from $\bar\mu_Y$.  
Define $\rho:Y\to\Z^+$ and $F:Y\to Y$ by setting $\rho(y)=\rho(\pi y)$
and $F(y)=f^{\rho(y)}y$.
Using these (instead of $\rho:\bY\to\Z^+$ and $\bF:\bY\to \bY$)
we obtain a two-sided Young tower map $f_\Delta:\Delta\to \Delta$ 
with ergodic $f_\Delta$-invariant probability measure 
$\mu_\Delta=\mu_Y\times{\rm counting}/\int_Y\rho\,d\mu_Y$.
The projection $\pi:X\to\bX$ extends to a semiconjugacy $\pi:\Delta\to\bar\Delta$ given by $\pi(y,\ell)=(\pi y,\ell)$, and $\pi_*\mu_\Delta=\bar\mu_\Delta$.
Moreover, the projection
\[
\pi_\Delta:\Delta\to X, \qquad
\pi_\Delta(y,\ell)= f^\ell y,
\]
is a semiconjugacy from $f_\Delta$ to $f$ and 
$\pi_{\Delta\,*}\mu_\Delta=\mu$.

The separation time $s(y,y')$ of points $y,y'\in \bY$ is
the least integer $n\ge0$ such that $\bF^ny$ and
$\bF^ny'$ lie in distinct elements of the partition
$\alpha$.  This extends to $\bar\Delta$ by setting $s((y,\ell),(y',\ell'))=s(y,y')$ when $\ell=\ell'$ and zero otherwise, and then to $\Delta$ by setting
$s(p,p')=s(\pi p,\pi p')$.

For each $\theta\in(0,1)$, define the symbolic
metric $d_\theta$ on $\bar\Delta$  given by
$d_\theta(p,p')=\theta^{s(p,p')}$.  
Given $w:\bar\Delta\to\R$, we define
\[
\|w\|_\theta=|w|_\infty+ \sup_{p\neq p'} |w(p)-w(p')|/d_\theta(p, p').
\]

Let $\lambda_1\in(0,1)$ be as in Propositions~\ref{prop:secUH} and~\ref{prop:sec-cone}, and
set $\theta=\lambda_1^{\eta/2}$.
Let $v\in C^\eta(X,\R^d)$ with $\int_X v\,d\mu=0$.   We claim that there exists
$\chi\in L^\infty(\Delta,\R^d)$ and $\bar v\in L^\infty(\bar\Delta,\R^d)$ with
$\|\bar v\|_\theta<\infty$
such that 
\begin{align} \label{eq:chi}
v\circ \pi_\Delta=\bar v\circ\pi+\chi\circ f_\Delta-\chi.
\end{align}
Suppose that the claim is true.
Since $\bar\Delta$ is a one-sided Young tower~\cite{Young99} with superpolynomial tails (in fact $\beta>2$ suffices here) and $\|\bar v\|_\theta<\infty$, 
it follows that $\bar v$ satisfies all of the 
desired statistical properties by the mentioned references.
 These are inherited  (since $\pi$ is measure-preserving) by 
$\bar v\circ\pi:\Delta\to\R^d$.  Since $\chi\in L^\infty$, the properties are
inherited by $v\circ\pi_\Delta:\Delta\to\R^d$ and thereby $v$ (since $\pi_\Delta$ is measure-preserving).

It remains to verify the claim.
Define $\chi:\Delta\to\R^d$,
\[
\chi(p)=\sum_{j=0}^\infty \big(v\circ f^j\circ \pi_\Delta(\pi p)-
v\circ f^j\circ \pi_\Delta(p)\big).
\]
For $p=(y,\ell)$, using Proposition~\ref{prop:secUH}(a), we have
\begin{align*}
|\chi(p)| & \le
\sum_{j=0}^\infty {|v|}_{C^\eta} \|f^j\circ\pi_\Delta(\pi p)-f^j\circ\pi_\Delta(p)\|^\eta
={|v|}_{C^\eta}\sum_{j=0}^\infty \|f^{j+\ell}(\pi y)-f^{j+\ell}(y)\|^\eta
\\ & \le 
{|v|}_{C^\eta}\sum_{j=0}^\infty \lambda_1^{\eta j}\|\pi y-y\|^\eta<\infty.
\end{align*}
Hence $\chi\in L^\infty(\Delta)$.   

Let
$\hat v= v\circ\pi_\Delta-\chi\circ f_\Delta+\chi$.
It follows from the definitions that
$\hat v:\Delta\to\R^d$ is constant along fibres $\pi^{-1}\bar p$ for $\bar p\in\bar\Delta$.
Indeed,
\[
\hat v(p)=\sum_{j=0}^\infty v\circ f^j\circ \pi_\Delta(\pi p)
-\sum_{j=0}^\infty v\circ f^j\circ\pi_\Delta(\pi f_\Delta p).
\]
Hence we can write $\hat v=\bar v\circ\pi$ where
$\bar v:\bar\Delta\to\R^d$ satisfies~\eqref{eq:chi}.

Clearly, $|\bar v|_\infty \le |v|_\infty+2|\chi|_\infty<\infty$.

Let $p=(y,\ell),\,p'=(y',\ell')\in\Delta$.
If $\ell\neq\ell'$, then
$|\bar v(p)-\bar v(p')|\le 2|\bar v|_\infty=2|\bar v|_\infty d_\theta(p,p')$.
When $\ell=\ell'$,
 set $N=[s(p,p')/2]$.  Then
\[
|\hat v(p)-\hat v(p')|\le A_N(p)+A_N(p')+B_N(p,p')+B_{N-1}(f_\Delta p,f_\Delta p'),
\]
where 
\begin{align*}
A_N(q) & =\sum_{j=N}^\infty |v\circ f^j\circ \pi_\Delta (\pi q)
- v\circ f^{j-1}\circ \pi_\Delta(\pi f_\Delta  q)|, \\
B_N(q,q') & =\sum_{j=0}^{N-1} |v\circ f^j\circ \pi_\Delta (\pi q)
- v\circ f^j\circ \pi_\Delta(\pi q')|. 
\end{align*}
The calculation for $\chi$ gives $A_N(q)=O( \lambda_1^{\eta N})=O( \theta^{s(p,p')})$ for $q=p,p'$.
Next, 
\[
B_N(p,p')  =\sum_{j=0}^{N-1} |v\circ f^{j+\ell}(\pi y)
- v\circ f^{j+\ell}(\pi y')|. 
\]
Write $n=s(p,p')$.  By Proposition~\ref{prop:sec-cone},
\begin{align*}
\diam X & \ge \|f^n\circ f^\ell(\pi y)-f^n\circ f^\ell(\pi y')\|
=\|f^{n-j}\circ f^j(f^\ell\pi y)-f^n\circ f^j(f^\ell\pi y')\|
\\ & \ge \lambda_1^{-(n-j)}\|f^j(f^\ell\pi y)-f^j(f^\ell\pi y')\|,
\end{align*}
for all $j\le n$.
Hence
\begin{align} \label{eq:s}
\|f^j(f^\ell \pi y)-f^j(f^\ell \pi y')\|=O( \lambda_1^{s(y,y')-j}),
\end{align}
and so
$B_N(p,p')  \le C\sum_{j=0}^{N-1} \lambda_1^{\eta(s(y,y')-j)}=O(
\theta^{s(p,p')})$.
Similarly, $B_{N-1}(f_\Delta p,f_\Delta p')=O( \theta^{s(p,p')})$.

Hence we have shown that $|\hat v(p)-\hat v(p')|=O( \theta^{s(p,p')})$ and
so $\|\bar v\|_\theta<\infty$ as claimed.
\end{proof}

\begin{rmk}
The ASIP and vector-valued ASIP have numerous consequences summarised in
\cite[p.~233]{MorrowPhilipp82}.  These include
 the central limit theorem (CLT); the functional CLT, also known as the weak invariance principle; the (functional, vector-valued) law of the iterated logarithm (LIL); upper and lower class refinements of the LIL and Chung's LIL.
\end{rmk}

\begin{rmk}  The nondegeneracy assumption $\sigma^2>0$ fails only on a closed subspace of infinite codimension in the space of $C^\eta$ observables.  Indeed if $\sigma^2=0$ and $x\in X$ is a periodic point, then there exists $N\ge1$ such that $\sum_{j=0}^{N-1}v(f^jx)=0$ for all $v\in C^\eta(X)$ with mean zero.
(See~\cite[Theorem~B]{AMV15} for such a result in a more difficult context.)
Similar comments apply to the covariance matrix $\Sigma$ in the vector-valued ASIP.  
Taking one-dimensional projections, we obtain that
the nondegeneracy assumption $\det\Sigma>0$ fails only on a closed subspace of infinite codimension.
\end{rmk}

\section{Statistical properties of singular hyperbolic attractors}
\label{sec:SH}

In this section, we investigate statistical properties of the flow $Z_t$ on a codimension two singular hyperbolic attracting set.
We begin by modifying the Poincar\'e section so that the roof function $\tau$ becomes constant along stable leaves.

Let $\bX$ be the union of $u$-curves in Section~\ref{sec:barf}
and define $X_+=\bigcup_{x\in\bX}W^s_x$.
Then $X_+$ is a H\"older-embedded cross-section and we obtain a new
Poincar\'e map $f_+:X_+\to X_+$ with return time function
$\tau_+:X_+\to \R^+$.
We also define the quotient map $\bar f_+=h\circ f_+:\bX\to\bX$ where $h$ is the stable holonomy in $X_+$.

\begin{prop} \label{prop:tau'}
$\tau_+$ is constant along stable leaves in $\cW^s$ and
$\bar f_+=\bar f$.
\end{prop}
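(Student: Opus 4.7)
The approach is to exploit the defining feature of $X_+$: by construction, $X_+ = \bigcup_{x \in \bX} W^s_x$ is saturated by full stable leaves of $\cW^s$. Both claims then follow from flow-invariance of the stable foliation combined with the tight geometric relationship between the flat cross-section $X$ and the stable-leaf cross-section $X_+$. The preliminary observation I would record is: whenever $p' \in X_+$, the entire leaf $W^s_{p'}$ lies in $X_+$, since $p' \in W^s_{z'}$ for some $z' \in \bX$ forces $W^s_{p'} = W^s_{z'} \subset X_+$.

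For the first claim, let $p \in X_+$ and $q \in W^s_p$ (so $q \in X_+$). Flow-invariance gives $Z_t q \in W^s_{Z_t p}$ for all $t \ge 0$. By the preliminary observation, $Z_t p \in X_+$ forces $W^s_{Z_t p} \subset X_+$, and in particular $Z_t q \in X_+$; the reverse implication is identical. Thus $p$ and $q$ have the same set of return times to $X_+$, so $\tau_+(p) = \tau_+(q)$.

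For the second claim, let $x \in \bX$, write $f(x) \in \Sigma_{y_k}$, and set $z = \bar f(x) \in \gamma_k \subset \bX$. By the definition of $\bar f$, $z$ lies in the cross-sectional stable leaf $W^s_{f(x)}(\Sigma_{y_k})$. Unwinding the definition $W^s_{f(x)}(\Sigma_{y_k}) = \bigcup_{|s|<\eps_0} Z_s(W^s_{f(x)}) \cap \Sigma_{y_k}$, there exists $|s|<\eps_0$ with $Z_s f(x) \in W^s_z$, equivalently $Z_{\tau(x)+s}x \in W^s_z \subset X_+$. I would next argue that this time is indeed the first return to $X_+$ above the threshold $T_1$: the trajectory $\{Z_t x : T_1 < t < \tau(x)+s\}$ avoids every flow box $V_{y_j}$ except for a single transit through the one containing $f(x)$, and within that transit the stable disk $W^s_z$ is crossed transversally exactly once. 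Hence $\tau_+(x) = \tau(x)+s$ and $f_+(x) = Z_s f(x) \in W^s_z$. The stable holonomy $h$ in $X_+$ sends any point to the unique intersection of its stable leaf with $\bX$, and $W^s_{f_+(x)} = W^s_z$ meets $\bX$ precisely at $z$, so $\bar f_+(x) = h(f_+(x)) = z = \bar f(x)$.

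The main obstacle I anticipate is the \emph{first-return} verification $\tau_+(x) = \tau(x)+s$; the rest reduces to unpacking the definitions of $W^s_x(\Sigma)$ and the two stable holonomies. This would use the choice of the subsections $\Sigma'_{y_j}$ and flow boxes $V_{y_j}$ in Section~\ref{sec:f}, together with the fact that $X_+$ is contained in a $\cW^s$-saturated neighborhood of $X$ swept out by stable disks of uniformly small diameter (see~\eqref{eq:delta}), so that no extraneous crossings with $X_+$ can occur in the gap between $T_1$ and $\tau(x)$.
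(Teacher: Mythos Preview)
Your proposal is correct and follows essentially the same approach as the paper: flow-invariance of $\cW^s$ gives constancy of $\tau_+$ along stable leaves, and comparing the two holonomies via the relation $f_+(x)=Z_s f(x)$ for small $s$ gives $\bar f_+=\bar f$. In fact you are more careful than the paper on one point: the paper asserts $\tau_+(x)=\tau(x)$ for $x\in\bX$, which is not quite right (and is inconsistent with its own subsequent claim that $f_+ x=Z_t fx$ for some small $t$); your version $\tau_+(x)=\tau(x)+s$ with $|s|<\eps_0$ is the accurate statement, and your identification of the first-return verification as the only nontrivial step is appropriate.
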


\begin{proof}
For fixed $x\in\bX$, set $T_0=\tau(x)$.  The stable foliation
$\cW^s$ is invariant under the time $T_0$-map $Z_{T_0}$ so
$Z_{T_0}(W^s_x)=W^s_{T_0x}\subset X_+$.
Hence $\tau_+(x)=T_0$ for each $x\in W^s_x$.

Next, recall that $W^s_{fx}(X)$ is the intersection
of $\bigcup_{|t|<\eps_0}Z_t W^s_x$ with $X$ for suitably chosen $\eps_0$.  
Then $\bar f x$ is the unique intersection point of $\bigcup_{|t|<\eps_0}Z_t W^s_x$ with $\bX$.  
But $f_+ x=Z_t fx$ for some small $t$ so $\bar f_+ x$ also lies in the intersection of $\bigcup_{|t|<\eps_0}Z_t W^s_x$ with $\bX$.  Hence $\bar f_+ x=\bar f x$.
\end{proof}

In this section, we work with the new Poincar\'e map and
roof function which we relabel $f:X\to X$ and $\tau:X\to\R^+$.
In doing so we lose the smoothness properties of $f$ and $\tau$ --- they are now only piecewise H\"older.  However we gain the property that
$\tau$ is constant along the stable foliation in $X$.
Since $\bar f:\bX\to\bX$ is unchanged; we still have that $\bar f$ is piecewise $C^{1+\eps}$ and the results on $\bar f$ in Section~\ref{sec:barf} and the physical measures and statistical properties in Section~\ref{sec:stat} remain valid.

Define the suspension 
\[
X^\tau=\{(x,u)\in X\times\R: 0\le u\le\tau(x)\}/\sim\quad\text{where $(x,\tau(x))\sim(fx,0)$},
\]
and the suspension flow $(x,u)\mapsto(x,u+t)$ (computed modulo identifications).

\begin{thm} \label{thm:spectral}  There exists a 
finite number of 
ergodic $Z_t$-invariant probability measures $\mu_{M,1},\dots,\mu_{M,s}$ whose basins cover a subset of $U_0$ of full Lebesgue measure.   
\end{thm}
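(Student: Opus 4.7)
\begin{pfof}{Theorem~\ref{thm:spectral} (proposal)}
The plan is to transport the Poincar\'e-map spectral decomposition from Corollary~\ref{cor:spectral} to the flow via the standard suspension construction, and then push forward to $M$ by the flow map $\Phi(x,u)=Z_u(x)$.

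\emph{Integrability of $\tau$.}  I would first verify that $c_j:=\int_X\tau\,d\mu_j<\infty$ for each $j=1,\dots,s$.  By Proposition~\ref{prop:tau'}, $\tau$ is constant along stable leaves, so it descends to a function $\bar\tau$ on $\bX$ with $\tau=\bar\tau\circ\pi$; hence, using $\pi_*\mu_j=\bar\mu_j$ from Corollary~\ref{cor:spectral}, we have $\int_X\tau\,d\mu_j=\int_\bX\bar\tau\,d\bar\mu_j$.  Since $\Gamma_0$ is a finite union of stable leaves (Proposition~\ref{prop:Gamma}), the set $\bar\Gamma_0=\pi(\Gamma_0)\subset\bX$ is finite, and Lemma~\ref{lem:log} gives $\bar\tau(z)\le -C\log\dist(z,\bar\Gamma_0)+C'$.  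Combined with $d\bar\mu_j/d\Leb\in L^\infty$ from Proposition~\ref{prop:spectral}, this yields $\bar\tau\in L^1(\bar\mu_j)$ because $-\log\dist(\cdot,\bar\Gamma_0)$ is Lebesgue-integrable on the one-dimensional set $\bX$.

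\emph{Suspension and push-forward.}  Define the probability measure $\mu_j^\tau=c_j^{-1}(\mu_j\times\Leb)|_{X^\tau}$ on the suspension; invariance and ergodicity under the suspension flow follow from the $f$-invariance and ergodicity of $\mu_j$.  Setting $\mu_{M,j}=\Phi_*\mu_j^\tau$ gives ergodic $Z_t$-invariant probability measures on $M$.  Writing $B_j\subset X$ for the $f$-basin of $\mu_j$, a standard suspension argument (using $\tau\in L^1(\mu_j)$ to invoke the Birkhoff ergodic theorem on the suspension) shows that the flow-basin $B_{M,j}$ of $\mu_{M,j}$ contains $\Phi(\{(x,u):x\in B_j,\,0\le u<\tau(x)\})$.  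Since $\bigcup_j B_j$ has full Lebesgue measure in $X$ by Corollary~\ref{cor:spectral} and the local disintegration of Lebesgue on flow boxes is absolutely continuous with respect to the product of Lebesgue on the cross-section with Lebesgue on the flow direction, $\bigcup_j B_{M,j}$ has full Lebesgue measure inside $\Phi(X^\tau)$.

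\emph{Main obstacle: basin coverage of $U_0$.}  The remaining step, which I expect to be the main technical point, is to show that $U_0\setminus\Phi(X^\tau)$ is Lebesgue null.  A point $x\in U_0$ lies outside $\Phi(X^\tau)$ only if its forward semiorbit never meets $\bigcup_j V'_{y_j}$.  Since $\Lambda$ is an attracting set, $\omega(x)\subset\Lambda$, and the inclusion $\Lambda\setminus V_0\subset\bigcup_j V'_{y_j}$ arranged in Section~\ref{sec:f} then forces $\omega(x)$ to consist entirely of equilibria; hyperbolicity of each $\sigma\in\Lambda\cap V_0$ together with a Hartman-Grobman argument inside $V_\sigma$ places $x$ in the local stable set of some single $\sigma$.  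The finite union of these stable sets is a finite union of injectively immersed submanifolds of positive codimension, hence Lebesgue null.  Finiteness of $s$ is inherited directly from Corollary~\ref{cor:spectral}.
\end{pfof}
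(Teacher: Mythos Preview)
Your approach is essentially the same as the paper's: construct the suspension measures $\mu_j^\tau=\mu_j\times\Leb/\int_X\tau\,d\mu_j$, push them forward to $M$ via $(x,u)\mapsto Z_ux$, and then argue basin coverage.  The paper dispatches the last step by citing \cite[Section~7]{APPV09}, whereas you spell out the argument (and also supply the integrability of $\tau$, which the paper leaves implicit).  Your verification of $\tau\in L^1(\mu_j)$ via the logarithmic bound and $d\bar\mu_j/d\Leb\in L^\infty$ is correct and is exactly what is needed.

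There is one imprecision in your basin-coverage step.  You claim that a point $p\in U_0$ lies outside $\Phi(X^\tau)$ only if its forward semiorbit never meets $\bigcup_j V'_{y_j}$.  This is not quite right: membership in $\Phi(X^\tau)$ requires the \emph{backward} orbit of $p$ to hit $X$, and for $p$ near $\partial U_0$ the backward orbit may exit $U_0$ before doing so, even though the forward orbit does reach $X$.  The fix is to bypass $\Phi(X^\tau)$ entirely and use flow-invariance of the basin instead: if the forward orbit of $p$ hits $X$ at time $T$ at a point $q\in B_j$, then (by your suspension argument) $q\in B_{M,j}$, and since Birkhoff averages are unaffected by a finite initial segment, $p\in B_{M,j}$ as well.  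Absolute continuity of the first-hit map then reduces the exceptional set to those $p$ whose forward orbit never reaches $X$, and your argument that such $p$ lie in the stable manifold of some equilibrium (a finite union of positive-codimension immersed submanifolds) is correct.
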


\begin{proof}
For each $\mu_j$ in Corollary~\ref{cor:spectral}, we obtain an ergodic flow-invariant probability measure $\mu_j^\tau=\mu_j\times{\rm Lebesgue}/\int_X\tau\,d\mu_j$
on $X^\tau$.  The projection $\pi^\tau:X^\tau\to M$, $\pi^\tau(x,u)=Z_ux$ defines a semiconjugacy from $X^\tau$ to $M$ and 
$\mu_{M,j}=\pi^\tau_*\mu_j^\tau$ is an ergodic $Z_t$-invariant probability measure on $M$.
By~\cite[Section~7]{APPV09}, these form a finite family of physical measures
$\mu_{M,j}$ for the flow $Z_t$ whose basins cover a subset of
$U_0$ of full Lebesgue measure.
\end{proof}

Suppose without loss that there is a unique physical measure $\mu_M=\pi^\tau_*\mu^\tau$ where $\mu^\tau=\mu\times{\rm Lebesgue}/\int_X\tau\,d\mu$ (in the notation above).
Recall that $\bar\mu$, and hence $\mu$, is mixing up to a finite cycle of length $k\ge1$.
By shrinking the cross-section $X$ we may suppose without loss that the measure $\mu$ on $X$ is mixing.

Define the {\em induced} roof function
\[
\varphi:\bY\to\R^+,\qquad  \varphi(y)=
\textstyle{\sum_{\ell=0}^{\rho(y)-1}}\tau(\bar f^\ell y).
\]

\begin{prop} \label{prop:tail}
$\mu_Y(\varphi>t)=O(t^{-\beta})$ for any $\beta>0$.
\end{prop}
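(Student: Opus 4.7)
The plan is to bound $\varphi$ pointwise by a polynomial in $\rho$ on each partition element, and then transfer the superpolynomial tail bound on $\rho$ from Lemma~\ref{lem:AFLV} to $\varphi$.

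The first step is to control the singularities of $\tau$. By Lemma~\ref{lem:log}, $\tau(x)\le -C_0\log\dist(x,\Gamma_0)$; after the modification of the Poincar\'e section at the start of Section~\ref{sec:SH}, $\tau$ is constant along stable leaves, so this descends to a bound $\tau(y)\le C_1 - C_2\log d(y,\cS)$ on $\bX$, where $\cS\subset\bX$ is the finite set of singular points of $\bar f$ (this is essentially the form $\tau = -\lambda_u^{-1}\log|\bar h_2 x|+t(x)$ used in the proof of Proposition~\ref{prop:tau}). The second step is to invoke the slow recurrence to $\cS$ built into the inducing scheme of Lemma~\ref{lem:AFLV}: the underlying hyperbolic-times estimate~\eqref{eq:hyptime} (also used to prove Proposition~\ref{prop:tau}) gives, for each partition element $a$ of $\bF=\bar f^\rho$ and each $y\in a$ with $\rho(y)=n$, an inequality of the form $d(\bar f^\ell y,\cS)\ge c\,\sigma^{n-\ell}$ for $\ell=0,\dots,n-1$, with fixed $c>0$ and $\sigma\in(0,1)$. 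Summing these estimates yields
\[
\varphi(y)=\sum_{\ell=0}^{n-1}\tau(\bar f^\ell y)\le C_1 n+C_2\sum_{\ell=0}^{n-1}\bigl[-\log c+(n-\ell)\log(1/\sigma)\bigr]\le C_3 n^2,
\]
so $\varphi(y)\le C_3\,\rho(y)^2$ on $\bY$.

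The final step is then immediate: $\{\varphi>t\}\subset\{\rho>(t/C_3)^{1/2}\}$, so Lemma~\ref{lem:AFLV} gives $\mu_Y(\varphi>t)\le\mu_Y(\rho>(t/C_3)^{1/2})=O(t^{-\beta/2})$, and since $\beta>0$ is arbitrary this delivers $\mu_Y(\varphi>t)=O(t^{-\beta})$ for every $\beta>0$. (Here I use implicitly that $d\mu_Y/d\Leb$ is bounded, which follows from Proposition~\ref{prop:spectral} together with the Gibbs--Markov property of $\bF$.) The main obstacle is extracting the slow recurrence estimate $d(\bar f^\ell y,\cS)\ge c\sigma^{n-\ell}$ cleanly from the hyperbolic-times construction underlying Theorem~\ref{thm:AFLV}; the reference may only supply a weaker rate. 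However, any bound of the form $\varphi\le C\rho^p$ for some finite $p$, or even $\varphi\le e^{\eps\rho}$ with $\eps>0$ small, would be sufficient, because $\rho$ has tails faster than every polynomial, so the precise rate obtained by the slow recurrence estimate is not critical.
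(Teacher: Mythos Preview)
Your main argument is correct and gives a genuinely different proof from the paper's. You establish a pointwise bound $\varphi\le C_3\rho^2$ by combining the logarithmic bound on $\tau$ with the slow-recurrence condition $d(\bar f^\ell y,\cS)\ge c\,\sigma^{b(n-\ell)}$ built into the hyperbolic-times construction of~\cite{AlvesLuzzattoPinheiro05}. This slow-recurrence estimate is indeed part of the definition of a $(\sigma,\delta)$-hyperbolic time there, holds for a reference point in each partition element, and propagates to all points of the element via the backward contraction on hyperbolic pre-balls (provided the exponent $b$ is taken smaller than $1/2$, as is standard). So the obstacle you flag is real but surmountable.

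The paper takes a more robust route that avoids opening up the inducing construction. It uses the general decomposition
\[
\mu_Y(\varphi>t)\le \mu_Y(\rho>k)+\bar\rho\,\bar\mu(\tau>t/k),
\]
valid for any $k\ge1$, and then uses only two soft inputs: the superpolynomial tail of $\rho$ from Lemma~\ref{lem:AFLV}, and the exponential tail of $\tau$ under $\bar\mu$ (which follows from $\tau\le -C\log d(\,\cdot\,,\cS)$ and $d\bar\mu/d\Leb\in L^\infty$). Optimising with $k=[t^{1/2}]$ gives the result. This approach is shorter and does not depend on any structural detail of the tower beyond the tail estimate for $\rho$; your approach gives a sharper pointwise relation between $\varphi$ and $\rho$ but at the cost of invoking specifics of~\cite{AlvesLuzzattoPinheiro05}.

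One genuine error in your write-up: the fallback claim that ``even $\varphi\le e^{\eps\rho}$ with $\eps>0$ small would be sufficient'' is false. Since $\rho$ has only superpolynomial (not stretched-exponential) tails here, $\varphi\le e^{\eps\rho}$ yields only $\mu_Y(\varphi>t)\le\mu_Y(\rho>\eps^{-1}\log t)=O((\log t)^{-\beta})$, which is far weaker than $O(t^{-\beta})$. A polynomial bound $\varphi\le C\rho^p$ is genuinely needed for your approach, so the slow-recurrence step is not optional.
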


\begin{proof}
A standard general calculation (see for example~\cite[Proposition~A.1]{BMprep}) shows that
\[
\mu_Y(\varphi>t)\le \mu_Y(\rho>k)+\bar\rho \mu(\tau>t/k),
\]
for all $t>0$, $k\ge1$, where $\bar\rho=\int_Y\rho\,d\mu_Y$.
In particular, 
since $\rho$ has superpolynomial tails and $\tau$ has at most logarithmic singularities, there is a constant $c>0$ such that
$\mu_Y(\varphi>t)=O( k^{-2\beta}+ e^{-ct/k})$.
Now take $k=[t^{1/2}]$.
\end{proof}

Recall that $\bF:\bY\to\bY$ is a Gibbs-Markov map with partition $\alpha$ and separation time $s(y,y')$.   

\begin{prop} \label{prop:varphi}
There exists $\theta\in(0,1)$ and $C>0$ such that
\[
|\varphi(y)-\varphi(y')|\le C\theta^{s(y,y')}
\quad \text{for all $y,y'\in a$, $a\in\alpha$.}
\]
\end{prop}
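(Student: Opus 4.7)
The plan is to reduce Proposition~\ref{prop:varphi} to Proposition~\ref{prop:tau} in two steps: first establish the estimate in terms of $|\bF y - \bF y'|^\eps$, and then convert this metric distance to symbolic distance using expansion of the Gibbs--Markov map.

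The first step is to rewrite $\varphi(y)-\varphi(y')$ in a form to which Proposition~\ref{prop:tau} applies. Since Proposition~\ref{prop:tau'} ensures that the modified roof function $\tau$ on $X_+$ is constant along the leaves of $\cW^s$, and since for each $\ell$ the points $\bar f^\ell y$ and $f^\ell y$ lie on the same stable leaf, we have $\tau(\bar f^\ell y)=\tau(f^\ell y)$, and similarly for $y'$. Thus
\begin{equation*}
\varphi(y)-\varphi(y')=\sum_{\ell=0}^{\rho(y)-1}\bigl(\tau(f^\ell y)-\tau(f^\ell y')\bigr).
\end{equation*}
To invoke Proposition~\ref{prop:tau} I need to know that its conclusion holds for the \emph{new} $\tau$. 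This is immediate because the change of cross-section only shifts the return time by a uniformly bounded piecewise H\"older function (namely, the flow time between the old and new sections, which is regular away from equilibria), so the structure $\tau(x)=-\lambda_u^{-1}\log|\bar h_2 x|+t(x)$ with $t$ H\"older from the proof of Lemma~\ref{lem:AFLV} persists, and the same calculation gives $|\varphi(y)-\varphi(y')|\le C|\bF y-\bF y'|^\eps$.

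The second step is a standard distortion/expansion estimate for the full branch Gibbs--Markov map $\bF:\bY\to\bY$. Let $n=s(y,y')$. By definition of the separation time, the points $\bF^j y, \bF^j y'$ lie in a common partition element for every $0\le j\le n-1$, so $\bF^{n-1}$ restricts to a diffeomorphism on a neighbourhood of these points. Combining bounded distortion of $\bF$ (the Gibbs--Markov condition $\bigl|\log|D\bF(y_1)|-\log|D\bF(y_2)|\bigr|\le C|\bF y_1-\bF y_2|^\eps$) with the uniform expansion $|D\bar f|\ge 2$ from Corollary~\ref{cor:barf}, which gives $|D\bF|\ge 2^\rho\ge 2$ on each partition element, one obtains
\begin{equation*}
|\bF y-\bF y'|\le C'\,2^{-(n-1)}|\bF^n y-\bF^n y'|\le C''\,2^{-(n-1)},
\end{equation*}
using that $\bF^n y,\bF^n y'\in\bY$ and $\diam\bY<\infty$. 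Setting $\theta=2^{-\eps}$ yields $|\bF y-\bF y'|^\eps\le C'''\theta^{s(y,y')}$, and combining with step one finishes the proof.

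The main delicate point is the first step: one must check that the logarithmic singularity analysis from the proof of Lemma~\ref{lem:AFLV} really does survive the change of section made at the start of Section~\ref{sec:SH}. Once one accepts that the regular (non-singular) part of the return time is merely shifted by a H\"older quantity, everything else is a routine chain of estimates; the Gibbs--Markov conversion from metric to symbolic distance is entirely standard.
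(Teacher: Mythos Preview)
Your proof is essentially correct and follows the same two-step scheme as the paper: first bound $|\varphi(y)-\varphi(y')|$ by $C|\bF y-\bF y'|^\eps$, then convert metric to symbolic distance. The second step is identical (the paper cites \eqref{eq:s} rather than redoing the Gibbs--Markov distortion argument, but these are equivalent).

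There is one point of imprecision in your first step. You assert that ``the same calculation'' from Proposition~\ref{prop:tau} goes through once $t$ is merely H\"older. But that proof uses $t\in C^{1+\eps}$ to write $|t(x)-t(x')|\le |Dt|_\infty|x-x'|$ and then absorb this into $C_1|x-x'|\,d(x,\cS)^{-1}$ so that \eqref{eq:hyptime} applies. When $t$ is only $C^\eps$ you get $|t(x)-t(x')|\le C|x-x'|^\eps$, and this does not fit the template of \eqref{eq:hyptime} (whose exponents $\eta,q$ are fixed by conditions (C1)--(C2) and need not match the H\"older exponent of the section change). The paper handles this by explicitly writing $\tau=\tau_0+\tau_1$, applying Proposition~\ref{prop:tau} only to $\tau_0$, and treating the H\"older correction $\tau_1$ by the direct geometric estimate
\[
|\tau_1(\bar f^\ell y)-\tau_1(\bar f^\ell y')|\le|\tau_1|_{C^\eps}|\bar f^\ell y-\bar f^\ell y'|^\eps\le C_1\,2^{-\eps(\rho(y)-\ell)}|\bF y-\bF y'|^\eps,
\]
summing to $C|\bF y-\bF y'|^\eps$. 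This is easy but is not literally ``the same calculation''.

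Your detour through $f^\ell y$ via constancy of $\tau$ on stable leaves is correct but unnecessary: $\varphi$ is already defined using $\bar f^\ell y\in\bX$, and all estimates live on $\bX$.
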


\begin{proof}
We can write $\tau=\tau_0+\tau_1$ where $\tau_0$ is as in previous sections and in particular satisfies the estimate 
Proposition~\ref{prop:tau}, and $\tau_1$ is $C^\eps$.
Setting $\theta=2^{-\eps}$ and using uniform expansion of $\bar f$,
\[
|\tau_1(\bar f^\ell y)-\tau_1(\bar f^\ell y')|
\le {|\tau_1|}_{C^\eps}|\bar f^\ell y-\bar f^\ell y'|^\eps
\le C_1 
\theta^{\rho(y)-\ell}|\bF y-\bF y'|^\eps.
\]
Combining this with the estimate for $\tau_0$, we obtain that 
$\sum_{\ell=0}^{\rho(y)-1}|\tau(\bar f^\ell y)-\tau(\bar f^\ell y')|
\le C_1' |\bF y-\bF y'|^\eps$.
By~\eqref{eq:s}, $|\bF y-\bF y'|=O(2^{-s(y,y')})$ and the result follows.
\end{proof}

\subsection{Statistical limit laws for the flow}
\label{sec:statflow}

If $\Lambda=\supp\mu_M$ contains no equilibria, then $\Lambda$ is a nontrivial hyperbolic basic set for an Axiom~A flow and the CLT for H\"older observables
follows from~\cite{Ratner73,MT04}.  Moreover,~\cite{DenkerPhilipp84} obtains a version of the (scalar) ASIP that implies the functional CLT and functional LIL.

When $\Lambda$ contains equilibria, the CLT and its functional version still holds by~\cite{HM07} at least for geometric Lorenz attractors.
As pointed out in~\cite{BMprep}, a simpler argument than in~\cite{HM07} applies in general situations where the roof function is unbounded
and includes the entire class of singular hyperbolic attractors analysed in this paper.  We refer to the introduction of~\cite{BMprep} for a more comprehensive list of statistical limit laws, with precise statements, that can be obtained in this way.
% is to add extra layers to the tower $\Delta$ and redistribute the roof function so that $\tau$ becomes bounded.  
% More precisely, define $\tilde\rho|a=1+[\sup_a\varphi+C]$ where
% $C$ is the constant in Proposition~\ref{prop:varphi}.
% Form the one-sided tower $\Delta$ using $\bF$ and $\tilde\tau$,
% and define 
% \[
% \tilde\tau:\Delta\to\R^+, \qquad \tilde\tau(y,\ell)=\varphi(y)/\tilde\rho(y).
% \]
% Since $\varphi(y)=\sum_{\ell=0}^{\tilde\rho(y)-1}\tilde\tau(y,\ell)$, the flow can be modelled by the suspension flow on the suspension
% $\Delta^{\tilde\tau}$.
% 
% Since $\tilde\rho\le 1+\varphi+C$,
% it follows from Proposition~\ref{prop:tail}  that $\tilde\rho$ has superpolynomial tails.
% Hence the functional CLT/LIL still holds for Lipschitz observables on $\bar\Delta$. 
% Also, $\|\tilde\tau\|_\theta<\infty$.
% Hence the CLT and functional LIL for the flow follow from~\cite{MT04} and the functional CLT for the flow follows as in~\cite{MZ15} (see also~\cite[Theorem~3.3]{GM16}).

\subsection{Mixing and superpolynomial mixing for the flow}

\begin{thm} \label{thm:super}
There is a $C^2$-open and $C^\infty$-dense set of singular hyperbolic flows such that each nontrivial attractor $\Lambda$ is mixing with superpolynomial decay of correlations: for any $\beta>0$,
\[
\Big|\int_\Lambda v\,w\circ Z_t\,d\mu_M-
\int_\Lambda v\,d\mu_M
\int_\Lambda w\,d\mu_M\Big|\le Ct^{-\beta}\quad\text{for all $t>0$}, 
\]
for all $v,w:M\to\R$ such that one of $v$ or $w$ is
$C^\infty$ and the other is H\"older.  Here $C$ is a
constant depending on $v$, $w$ and $\beta$.
\end{thm}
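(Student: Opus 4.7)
The plan is to represent $Z_t$ as a suspension over the induced Gibbs--Markov map $\bF:\bY\to\bY$ constructed in Section~\ref{sec:stat}, with roof $\varphi=\sum_{\ell=0}^{\rho-1}\tau\circ\bar f^\ell$ that has superpolynomial tails (Proposition~\ref{prop:tail}) and Lipschitz regularity on cylinders (Proposition~\ref{prop:varphi}), and then to apply an abstract Dolgopyat-type theorem for such suspension semiflows. Superpolynomial decay is obtained under a uniform nonintegrability (UNI) hypothesis on $\varphi$; the final step, and the main difficulty, is to check UNI on a $C^2$-open, $C^\infty$-dense family of flows by a perturbation-of-periodic-data argument in the spirit of~\cite{Dolgopyat98a,FMT07}.

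First I would set up the suspension representation. By Proposition~\ref{prop:tau'} the modified roof $\tau$ is constant along stable leaves, so the suspension semiflow over the one-dimensional expanding map $\bar f:\bX\to\bX$ with roof $\tau$ is semiconjugate, through the stable holonomy and the projection $\pi^\tau:X^\tau\to M$, to $Z_t$ on a full Lebesgue measure subset of~$U_0$. Inducing on $\bY\subset\bX$ with return time $\rho$ converts this into a suspension of $\bF=\bar f^\rho$ over $\varphi$, which falls exactly into the class to which the abstract decay-of-correlations machinery applies.

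Next I would invoke the abstract superpolynomial decay result for suspensions over Gibbs--Markov maps with polynomial/superpolynomial roofs, in the form developed in~\cite{BMprep} (building on~\cite{Dolgopyat98a}). Its hypotheses are: (i) $\bF$ Gibbs--Markov, (ii) $\varphi$ has superpolynomial tails, (iii) $\varphi$ is Lipschitz on cylinders, and (iv) the UNI condition, namely that there exist two families of inverse branches of $\bF^n$ along which the temporal distortion $\varphi_n(y)-\varphi_n(y')$ has derivative bounded away from zero uniformly in~$n$. Items~(i)--(iii) hold in our setting, and granted~(iv) the theorem yields superpolynomial mixing for the suspension. One then pushes the estimate through the semiconjugacy $\pi^\tau$: the $C^\infty$ regularity of one of $v,w$ absorbs the stable-holonomy coordinate (with H\"older Jacobian by Theorem~\ref{thm:H}) and the constancy of $\tau$ along $\cW^s$ lets the quotient $\bar f$ stand in for $f$, producing the stated bound on $\Lambda$.

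The main obstacle is verifying UNI on a $C^2$-open, $C^\infty$-dense set of singular hyperbolic flows. The strategy, adapted from~\cite{FMT07}, is as follows. Fix two hyperbolic periodic orbits of $Z_t$ contained in $\Lambda$ and bounded away from the equilibrium set; their periods $T_1,T_2$ enter $\varphi$ along two families of inverse branches, and the relevant temporal distortion is built from $T_1,T_2$ together with uniformly controlled expanding data. For openness in $C^2$: UNI is a strict inequality on a compact family of inverse branches, $T_1,T_2$ vary continuously under $C^2$-perturbations of $G$, and the logarithmic blow-up of $\tau$ near equilibria is uniformly controlled by Lemma~\ref{lem:log} so that the singular stratum does not affect the compactness argument; also singular hyperbolicity itself is $C^1$-open, so perturbations remain in our class. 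For $C^\infty$-density: given a singular hyperbolic flow, choose $C^\infty$-small perturbations of $G$ supported in a flow-box around the first periodic orbit and disjoint from the second, which continuously vary $T_1$ while fixing $T_2$; a standard Baire/Diophantine argument then shows one may perturb so that the resulting temporal distortion avoids all coboundary/lattice degeneracies, yielding UNI. Corollary~\ref{cor:super} then follows from the superpolynomial mixing by standard arguments.
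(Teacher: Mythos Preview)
Your overall architecture matches the paper's: model the flow as a suspension over the induced Gibbs--Markov map $\bF:\bY\to\bY$ with roof $\varphi$, use Propositions~\ref{prop:tail} and~\ref{prop:varphi} to verify the tail and regularity hypotheses, invoke an abstract rapid-mixing theorem subject to a nondegeneracy condition, and finally argue that this nondegeneracy is $C^2$-open and $C^\infty$-dense via~\cite{FMT07}. The paper does exactly this, citing \cite[Theorem~3.1]{BBMsub} (not \cite{BMprep}, which concerns limit laws, not decay) and working with the skew-product flow over~$Y$ rather than only the quotient semiflow over~$\bY$.

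However, there is a genuine gap in your choice of nondegeneracy condition. You invoke UNI in the Dolgopyat sense --- two families of inverse branches along which the \emph{derivative} of the temporal distortion $\varphi_n$ is bounded away from zero --- and plan to run the Dolgopyat cancellation mechanism. That mechanism requires $\varphi$ (equivalently the quotient roof~$\tau$) to be $C^1$ along inverse branches, and here it is not. After the modification in Proposition~\ref{prop:tau'} that makes $\tau$ constant on stable leaves, the quotient roof decomposes as $\tau=\tau_0+\tau_1$ with $\tau_1$ only $C^\eps$ (see the proof of Proposition~\ref{prop:varphi}); the H\"older correction $\tau_1$ comes precisely from the non-smoothness of the stable foliation, and there is no way around it in the present setting. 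The paper addresses this explicitly in the remark after Theorem~\ref{thm:mix}: ``without smoothness of $\cW^s$, the roof function $\tau$ \ldots\ is only H\"older and the cancellation argument of~\cite{Dolgopyat98a} fails.''

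What the paper uses instead is the weaker \emph{absence of approximate eigenfunctions} criterion from~\cite{FMT07,M07,M09}. This is a Diophantine condition on the periods of periodic orbits (essentially: certain ratios are badly approximable), and the associated rapid-mixing machinery in~\cite{BBMsub,rapid} requires only that $\varphi$ be Lipschitz in the symbolic metric~$d_\theta$, which is exactly what Proposition~\ref{prop:varphi} provides. Your perturbation argument --- vary the period of one periodic orbit while fixing another --- is indeed the heart of the $C^\infty$-density proof in~\cite{FMT07}, but its output is a Diophantine condition guaranteeing no approximate eigenfunctions, not UNI. If you replace ``UNI'' by ``absence of approximate eigenfunctions'' throughout and cite \cite{BBMsub} for the abstract input, your argument becomes essentially the paper's.
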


\begin{proof}  
If $\Lambda=\supp \mu_M$ contains no equilibria, then $\Lambda$ is uniformly hyperbolic and the result is due to~\cite{Dolgopyat98a,FMT07}.
The general case follows essentially from~\cite{M07,M09}.

More precisely,
we have seen that the semiflow and flow is modelled as a suspension over a Young tower with superpolynomial tails.  Using the induced roof function
$\varphi:Y\to\R^+$, 
we obtain a suspension $Y^\varphi$ over the uniformly hyperbolic map
$F:Y\to Y$ where the roof function $\varphi:Y\to\R^+$ has superpolynomial tails.

We are now in a position to apply~\cite[Theorem~3.1]{BBMsub}
(see also~\cite[Theorem~4.1]{rapid}).
Conditions~(3.1) and~(3.2) in~\cite{BBMsub} follow from Propositions~\ref{prop:secUH} and~\ref{prop:sec-cone}.
Moreover, $\varphi$ is constant along stable leaves by Proposition~\ref{prop:tau'} and projects to a well-defined roof function
$\varphi:\bY\to\R^+$ satisfying the estimate in Proposition~\ref{prop:varphi} which is condition~(3.3) in~\cite{BBMsub}.  Hence the suspension flow on $Y^\varphi$ is a skew product Gibbs-Markov flow in the terminology of~\cite{BBMsub}.  
Hence
superpolynomial mixing follows from~\cite[Theorem~3.1]{BBMsub}
subject to a nondegeneracy condition (absence of approximate eigenfunctions).

Finally, it is shown in~\cite{FMT07} that absence of approximate eigenfunctions is $C^2$-open and $C^\infty$-dense (cf.~\cite[Remark~2.5]{M09} or~\cite[Subsection~5.2]{rapid}).
\end{proof}

We have already seen that statistical limit laws such as the CLT hold for all singular hyperbolic flows.  In the situation of Theorem~\ref{thm:super}, we can obtain such results also for the time-one map of a singular hyperbolic flow.

\begin{cor} \label{cor:super}
Assume that $Z_t:\Lambda\to\Lambda$ has superpolynomial decay of correlations as in Theorem~\ref{thm:super}.
Let $v:M\to\R$ be $C^\infty$ (or at least sufficiently smooth) with mean zero.
Then the ASIP holds for the time-one map $Z_1$ for all
$C^\infty$ observables $v:M\to\R$.

In particular, the limit
$\sigma^2=\lim_{n\to\infty}n^{-1}\int_\Lambda(\sum_{j=0}^{n-1}v\circ Z_j)^2\,d\mu_M$ exists, and after passing to an enriched probability space, there exists a sequence $A_0,A_1,\ldots$ of i.i.d.\ normal random variables with mean zero and variance $\sigma^2$ such that
\[
\sum_{j=0}^{n-1}v\circ Z_j=\sum_{j=0}^{n-1}A_j
+O(n^{1/4}(\log n)^{1/2}(\log\log n)^{1/4}), \quad a.e.
\]
Moreover, if $\sigma^2=0$, then for every periodic point $q\in\Lambda$, there exists $T>0$ (independent of $v$) such that $\int_0^Tv(Z_tq)\,dt=0$.
\end{cor}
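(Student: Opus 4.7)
The proof splits into establishing the ASIP for the time-one map and then handling the degenerate case.

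For the ASIP, the plan is to exploit the Young-tower structure of the flow developed in Sections~\ref{sec:induce} and~\ref{sec:statflow} together with the superpolynomial decay given by Theorem~\ref{thm:super}. Recall that the flow is modelled as a suspension over the Gibbs-Markov base $\bF:\bY\to\bY$ with roof $\varphi$ having superpolynomial tails (Proposition~\ref{prop:tail}). For a $C^\infty$ observable $v$ with $\int v\,d\mu_M=0$, I would first compare the discrete sum to a flow integral using the identity
\[
v\circ Z_j-\int_j^{j+1}v\circ Z_t\,dt=-\int_0^1\!\int_0^s(Gv)\circ Z_{j+t}\,dt\,ds,
\]
where $G$ denotes the generating vector field; this reduces the excess of $\sum_{j=0}^{n-1}v\circ Z_j$ over $\int_0^n v\circ Z_t\,dt$ to a Birkhoff-type sum of the smooth mean-zero observable $Gv$, which fits in the same framework. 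Inducing $\int_0^n v\circ Z_t\,dt$ to $\bY$ produces an observable whose $\theta$-variation is controlled exactly as in Proposition~\ref{prop:varphi} (using smoothness of $v$), and the ASIP with the stated error rate $O(n^{1/4}(\log n)^{1/2}(\log\log n)^{1/4})$ then follows from~\cite{Gouezel10} applied to this induced observable, with the superpolynomial tail of $\varphi$ providing the summability these theorems require. The convergent Green-Kubo expression for $\sigma^2$ is a byproduct.

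For the nondegeneracy, suppose $\sigma^2=0$. The plan is a Livsic-type rigidity argument reducing to the Remark preceding this corollary. Let $q\in\Lambda$ be periodic under the flow. After flowing $q$ for a small time if necessary, we may assume $q\in X\cap\Lambda$; then $q$ is periodic for the Poincar\'e map $f$ with some period $N$, and $T:=\sum_{j=0}^{N-1}\tau(f^j q)$ is an integer multiple of the flow period of the orbit through $q$. Setting $V(x)=\int_0^{\tau(x)}v(Z_t x)\,dt$ on $X$, partitioning $[0,T]$ according to successive visits to $X$ yields
\[
\int_0^T v(Z_t q)\,dt=\sum_{j=0}^{N-1}V(f^j q).
\]
The vanishing of $\sigma^2$ for $Z_1$ transfers, via the discrete-integral comparison above, into vanishing of the corresponding asymptotic variance of $V$ under $f$. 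The Remark (an analog of \cite[Theorem~B]{AMV15}) then forces the right-hand side to vanish, yielding the desired identity with $T$ depending only on $q$.

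The main obstacle lies in the first part: transferring the ASIP rate from the Gibbs-Markov base to the continuous-time Birkhoff sums requires handling the boundary discrepancy when $n$ is not a return time of $\bF$, and this fluctuation must be absorbed using the superpolynomial tails of $\varphi$ together with smoothness of $v$. This is precisely why the conclusion is stated for $C^\infty$ (or at least sufficiently smooth) observables rather than merely H\"older ones. Once the ASIP for the time-one map is established, the nondegenerate argument is a relatively direct consequence of the Remark.
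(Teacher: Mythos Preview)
The paper's proof is a one-line deferral to~\cite{AMV15}, whose method is quite different from what you propose. Your route --- compare $\sum_{j<n}v\circ Z_j$ with $\int_0^n v\circ Z_t\,dt$ and induce the latter to the Gibbs--Markov base $\bY$ --- is how one proves the ASIP for the \emph{flow}; it does not yield the ASIP for the time-one map. The difference
\[
\sum_{j=0}^{n-1}v\circ Z_j-\int_0^n v\circ Z_t\,dt=\sum_{j=0}^{n-1}w\circ Z_j,\qquad w(x)=v(x)-\int_0^1 v(Z_sx)\,ds,
\]
is itself a $Z_1$-Birkhoff sum of a smooth mean-zero observable and is generically of order $n^{1/2}$ almost surely. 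It cannot be absorbed into the stated $O\big(n^{1/4}(\log n)^{1/2}(\log\log n)^{1/4}\big)$ error, and saying that it ``fits in the same framework'' simply reproduces the original problem with $w$ in place of $v$. Your identification of the main obstacle as a boundary discrepancy at non-return times of $\bF$ misses this larger obstruction; the tower/inducing machinery does not see the discrete sampling at integer times.

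The argument in~\cite{AMV15} bypasses the tower for this step and uses Theorem~\ref{thm:super} directly: superpolynomial decay evaluated at integer times gives $\big|\int_\Lambda v\cdot v\circ Z_n\,d\mu_M\big|=O(n^{-\beta})$ for every $\beta$, and more generally the correlation-type quantities needed for a Gordin martingale approximation for the map $Z_1$ are summable with arbitrary polynomial weights. One then applies a general martingale ASIP (of Cuny--Merlev\`ede type, which delivers exactly the stated error rate) to $Z_1$ itself. This is why $v$ must be sufficiently smooth --- Theorem~\ref{thm:super} only controls correlations of such observables --- and why the tower, though essential for \emph{proving} Theorem~\ref{thm:super}, plays no further role here. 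For the degenerate case your reduction to the Poincar\'e map has the right shape, but the step ``$\sigma^2_{Z_1}=0$ implies the asymptotic variance of $V$ under $f$ vanishes'' needs justification, since the discrete and continuous Green--Kubo expressions differ; one route is to observe that $\sigma^2_{Z_1}=0$ forces $\sup_n\big\|\sum_{j<n}v\circ Z_j\big\|_{L^2}<\infty$, hence also $\sup_T\big\|\int_0^T v\circ Z_t\,dt\big\|_{L^2}<\infty$, and then to run a Liv\v{s}ic argument for the suspension.
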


\begin{proof}
This is proved in the same way as~\cite[Theorems~B and C]{AMV15}.
\end{proof}

In the case of the classical Lorenz attractor, it was shown in~\cite{LMP05} and~\cite{AMV15} that mixing and superpolynomial mixing is automatic.  The proof exploits the {\em locally eventually onto (l.e.o.)}~property as well as smoothness properties of the stable foliation.  
We now show that the mixing argument in~\cite{LMP05} does not require the stable foliation to be smooth.
In the general situation of this paper, we assume hypotheses that are more complicated to state but which are implied by l.e.o.\
for the classical Lorenz attractor.

We require that $\Lambda$ contains at least one equilibrium.
Let $q\in\cS$ be the corresponding singularity for $\bar f:\bX\to\bX$.
(Again, $\bar f$ is not defined at $q$.)
Assume that the set of preimages of $q$ under iterates of $\bar f$ is 
dense in $\bX$.
(This condition is always satisfied for geometric Lorenz attractors.)
By Lemma~\ref{lem:AFLV} and Remark~\ref{rmk:ALP}, we can construct an induced Gibbs-Markov map $\bF=\bar f^\sigma:\bY\to\bY$ where the inducing set $\bY$ contains~$q$.  Let $K=\bigcup_{\ell\ge 0}\bar f^\ell\bY$; this is an open and dense full measure subset of $\bX$.   Our final assumption is
that 
$\bar f^pq_+=\lim_{y\to q+}\bar f^py\in K$ for some $p\ge1$.  (This would work equally well with $q_+$ replaced by $q_-$.)

\begin{thm}  \label{thm:mix}
Under the above assumptions,
$\Lambda$ is automatically mixing (and even Bernoulli).
\end{thm}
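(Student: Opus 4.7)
The plan is to follow the strategy of~\cite{LMP05} for the classical Lorenz attractor, but carrying it out using only the nonsmooth structure established in this paper. Represent the flow on $\Lambda$ as the projection under $\pi^\tau$ of the suspension flow on $X^\tau$ over the modified Poincar\'e map $f:X\to X$ of this section; by Proposition~\ref{prop:tau'}, the roof $\tau$ is constant along stable leaves and descends to a roof (still denoted $\tau$) on the one-dimensional quotient $\bar f:\bX\to\bX$. Standard suspension theory reduces mixing of $Z_t$ on $\Lambda$ to showing that $\tau$ is \emph{not arithmetic} over $\bar f$: no $c>0$, measurable $g:\bX\to\R$, and measurable $n:\bX\to\Z$ satisfy $\tau=cn+g-g\circ\bar f$ modulo $\bar\mu$.

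I would argue this by contradiction, lifting any putative arithmetic relation to the induced Gibbs-Markov map $\bF=\bar f^\sigma:\bY\to\bY$ and its induced roof $\varphi$, which satisfies the Gibbs-Markov regularity of Proposition~\ref{prop:varphi}. Since $\bY$ contains the equilibrium singularity $q$ by construction (Lemma~\ref{lem:AFLV} and Remark~\ref{rmk:ALP}), and $\tau$ has a logarithmic singularity at preimages of $q$ of the explicit form $-\lambda_u^{-1}\log\dist(\cdot,q)+O(1)$ extracted in the proof of Lemma~\ref{lem:AFLV}, the function $\varphi$ takes arbitrarily large values on cylinders of $\bY$ accumulating at $q$. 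Combining the density of $\bigcup_{n\ge0}\bar f^{-n}\{q\}$ in $\bX$ with the image hypothesis $\bar f^p q_+\in K$ yields topological mixing of $\bF$ and, for suitable $n\ge1$, two partition elements $a,a'\in\alpha$ whose images under $\bF^n$ cover a common partition element while straddling the singular set. The restriction of $\varphi$ to appropriately chosen subintervals of $a,a'$ can then be arranged so that the difference takes a continuum of values of the form $-\lambda_u^{-1}\log\dist(\cdot,q)+O(1)$. Such a continuum cannot be absorbed modulo $c\Z$ by the (globally bounded mod-$\varphi$) coboundary $G-G\circ\bF$ that the arithmetic hypothesis would provide, contradicting the relation.

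The hard part will be making this temporal-distortion argument rigorous without the smooth stable foliation that underpins~\cite{LMP05}. Concretely, one must combine the Gibbs-Markov regularity of the bounded part of $\varphi$ (Proposition~\ref{prop:varphi}) with the explicit logarithmic singularity (from the center-manifold linearisation step inside the proof of Lemma~\ref{lem:AFLV}), and verify that these together control the putative cohomology equation in a way that does not see the roughness of the stable holonomies; the absolute continuity with H\"older Jacobians of Theorem~\ref{thm:H} is exactly what allows the passage between the flow and $\bar f$ without losing information in this step.

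Once mixing is established, the Bernoulli property follows from the spectral/tower picture assembled in Section~\ref{sec:stat} together with Ornstein-style arguments: the Poincar\'e map $f$ is Bernoulli because its two-sided Young tower extension has a Gibbs-Markov (hence Bernoulli) base and an absolutely continuous H\"older stable foliation (Theorem~\ref{thm:H}); a weakly mixing suspension flow over a Bernoulli base with a roof of the regularity at hand is then Bernoulli by the Ornstein-Weiss theorem. As in~\cite{LMP05}, no additional dynamical input beyond mixing is required for this last step.
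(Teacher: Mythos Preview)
Your overall strategy (reduce to the cohomological/arithmeticity obstruction for the quotient semiflow, then exploit the logarithmic blow-up of $\tau$ at the equilibrium singularity $q$) matches the paper's, but you are missing the key simplifying step and replacing it with something substantially harder that you have not carried out.

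The paper does not attempt a temporal-distortion construction at all. Instead, it works with the $S^1$-valued form of the obstruction, $u\circ\bar f=e^{ib\tau}u$ with $b\neq0$, and invokes the Liv\v{s}ic regularity theorem of Bruin--Holland--Nicol~\cite{BruinHollandNicol05}: because $\bF$ is Gibbs-Markov and $\tau$ is H\"older with only logarithmic growth (Lemma~\ref{lem:log}), any measurable solution $u$ has a version that is \emph{continuous} on $K=\bigcup_{\ell\ge0}\bar f^\ell\bY$. Once $u$ is continuous at $q\in\bY\subset K$ and at $\bar f^pq_+\in K$, the contradiction is immediate: iterate to $u\circ\bar f^p=e^{ib\tau_p}u$ and let $y\to q_+$; the left- and right-hand factors $u(\bar f^py)$ and $u(y)$ converge, while $e^{ib\tau_p(y)}$ spins through the whole circle since $\tau_p(y)\ge\tau(y)\to\infty$ (Remark~\ref{rmk:log}). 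No cylinder bookkeeping, no matching of partition images, and no quantitative control of $\varphi$ beyond Proposition~\ref{prop:varphi} is needed. The ``hard part'' you flag simply does not arise.

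Your route via real-valued $g$ and a temporal-distortion contradiction could perhaps be pushed through, but as stated it is only a plan: you would need to produce the two partition elements with the claimed overlap property, quantify the ``continuum of values'' against the H\"older variation of the bounded part of $\varphi$, and rule out that the measurable $g$ absorbs the discrepancy---which is exactly what Liv\v{s}ic regularity handles for you in one stroke. For the Bernoulli upgrade, the paper also takes a shorter path: by~\cite{Ratner78}, weak mixing of the quotient suspension semiflow already yields Bernoulli for the flow, so no separate Ornstein--Weiss argument for $f$ is required.
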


\begin{proof}  We sketch the proof following~\cite{LMP05}.
By~\cite{Ratner78}, it suffices to show that the quotient suspension semiflow
$\bar f_t^\tau:\bX^\tau\to\bX^\tau$ is weak mixing.  Equivalently,
the cohomological equation $u\circ \bar f=e^{ib\tau}u$ has no measurable solutions $u:\bX\to S^1$ for all $b\neq0$.
(Here $S^1$ denotes the unit circle in $\C$.)

Suppose for contradiction that there exists $u:\bX\to S^1$ measurable and
$b\neq0$ such that $u\circ \bar f=e^{ib\tau}u$.
A Liv\v{s}ic regularity theorem
of~\cite{BruinHollandNicol05}, exploiting the fact that 
$\bF$ is Gibbs-Markov and that
the roof function $\tau$ is H\"older with
at most logarithmic growth (Lemma~\ref{lem:log}) ensures
that $u$ has a version that is continuous on $K$.

Also, $q\in \bY\subset K$.  Choose $p\ge1$ with
$\bar f^pq_+\in K$.  Then
$u\circ \bar f^p=e^{ib\tau_p}u$ where
$\tau_p=\sum_{j=0}^{p-1}\tau\circ \bar f^j$.  By
Remark~\ref{rmk:log}, $\tau_p(y)\ge\tau(y)\to\infty$ as
$y\to q_+$, whereas $u(y)\to u(q)$ and
$u(\bar f^py)\to u(\bar f^pq_+)$.  Since $b\neq0$, this
contradicts the equality $u\circ \bar f^p=e^{ib\tau_p}u$.
\end{proof}

\begin{rmk} If we assume in addition that the
  stable foliation $\cW^s$ for the flow is $C^{1+\eps}$,
  then we can deduce exponential decay of correlations
  following~\cite{AraujoM16}.  

However, without smoothness
  of $\cW^s$, the roof function $\tau$ (on the modified
  cross-section) is only H\"older and the cancellation
  argument of~\cite{Dolgopyat98a} fails.  In fact, we are
  unable even to prove superpolynomial mixing for fixed
  flows (without perturbing as in Theorem~\ref{thm:super}).
It should be possible to use the
  techniques in~\cite{AMV15} to prove that the stable and
  unstable foliations (defined appropriately) for the flow
  are not jointly integrable -- this is a stronger property
  than mixing.  However, we do not see how to use this to
  prove superpolynomial mixing when $\tau$ is only H\"older.
\end{rmk}

%%%%%%%%%%%%%%%%%%%%%%%%%%%%%%%%%%%%%%%%%%%%%%%%%%%%%%%%%%%%%%%%%%

\appendix

\section{Theorem of Alves {\em et al.}~\cite{AFLV11}}

In this appendix, we recall a result of  Alves {\em et al.}~\cite{AFLV11} that is required in Section~\ref{sec:spectral}.   
Although the argument in~\cite{AFLV11} is essentially correct, there are certain problems with the formulation of the hypotheses.  First, 
the hypotheses (C2) and (C3) in~\cite{AFLV11} are stated too strongly, since 
the right-hand side of their conditions are zero for
points $x\neq y$ equidistant from $\cS$, whereas the left-hand side is generally nonzero.   Second, the hypotheses are not stated strongly enough for 
the first half of the proof of~\cite[Lemma~5.1]{AFLV11}, since the estimate for $d(x,\cS)^{-\alpha}$ is false in general.
We state below a corrected version of the hypotheses in~\cite{AFLV11}.  The conclusion in Theorem~\ref{thm:AFLV} is identical to that in~\cite[Theorem~C]{AFLV11}, and the proof is largely unchanged.

Throughout, $(M,d)$ is a compact Riemannian manifold and
$f:M\to M$ is a local $C^{1+}$ diffeomorphism with singularity set $\cS$.
We suppose that there are constants $\eta\in(0,1)$ and $C,\,q>0$ such that
\begin{itemize}
\item[(C0)] $\Leb(x:d(x,\cS)\le\eps)\le C\eps^\eta$ for all $\eps\ge0$.
\item[(C1)]  $C^{-1}d(x,\cS)^q\le \|Df(x)v\|\le Cd(x,\cS)^{-q}$,
for all $x\in M\setminus\cS$, $v\in T_xM$ with $\|v\|=1$.
\item[(C2)] $\big|\log\|Df(x)^{-1}\|-\log\|Df(y)^{-1}\|\big|\le Cd(x,y)^\eta(\|Df(x)^{-1}\|^q+\|Df(x)^{-1}\|^{-q})$ for all $x,y\in M\setminus\cS$
with $d(x,y)<d(x,\cS)/2$.
\item[(C3)] $\big|\log|\det Df(x)|-\log|\det Df(y)| \big|\le C d(x,y)^\eta d(x,\cS)^{-q}$ for all $x,y\in M\setminus \cS$
with $d(x,y)<d(x,\cS)/2$.
\end{itemize}

Recall~\cite[Definition~1.2]{AFLV11} that a measure $\mu$ is expanding
if $\log\|(Df)^{-1}\|$ is integrable with respect to $\mu$ and
$\int_M\log\|(Df)^{-1}\|\,d\mu<0$.

Let $\Cov(v,w)= \int_M v\,w\,d\mu-\int_M v\,d\mu\int_M w\,d\mu $.

\begin{thm}[ {\cite[Theorem~C]{AFLV11}} ] \label{thm:AFLV}
Let $f:M\to M$ be a $C^{1+}$ local diffeomorphism satisfying (C0)--(C3), and let $\alpha\in(0,1)$.
Let $\mu$ be an ergodic expanding absolutely continuous invariant probability measure with $d\mu/d\Leb\in L^p$ for some $p>1$.

\vspace{1ex}
\noindent
(1) Suppose that there exists $\beta>1$ and $C>0$  such that 
$|\Cov(v,w\circ f^n)|\le C\|v\|_{C^\alpha} |w|_\infty\,n^{-\beta}$
for all $v\in C^\alpha$, $w\in L^\infty$, $n\ge1$.

Then  there is a full branch Gibbs-Markov induced map $F=f^\rho:Y\to Y$, where $\rho:Y\to\Z^+$ is constant on partition elements and satisfies $\Leb(y\in Y:\rho(y)>n)=O(n^{-(\beta-1)})$.
Moreover, there are constants $C,\,\eps>0$ such that 
\begin{align} \label{eq:hyptime}
\textstyle{\sum_{\ell=0}^{\rho(y)-1}}d(f^\ell y,f^\ell y')^\eta d(x,\cS)^{-q}\le Cd(Fy,Fy')^\eps,
\end{align}
for all $y,y'$ lying in the same partition element.

\vspace{1ex}
\noindent
(2)  Suppose that $d\mu/d\Leb$ is bounded below on its support and that there exist $\gamma\in(0,1]$, $C,\,c>0$ such that
$|\Cov(v,w\circ f^n)|\le C\|v\|_{C^\alpha} |w|_\infty\,e^{-cn^\gamma}$
for all $v\in C^\alpha$, $w\in L^\infty$, $n\ge1$.

Then the conclusion in (1) holds and moreover for any $\gamma'\in(0,\gamma/(3\gamma+6))$ there exists $c'>0$ such that
$\Leb(y\in Y:\rho(y)>n)=O\big(e^{-c'n^{\gamma'}}\big)$.
\end{thm}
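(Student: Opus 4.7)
The plan is to follow the inducing scheme of Alves--Freitas--Luzzatto--Vaienti, re-run with the \emph{corrected} hypotheses. The proof has two layers: an intrinsic construction that produces a Gibbs--Markov induced map $F=f^\rho:Y\to Y$ together with the distortion bound \eqref{eq:hyptime}, and a separate argument that converts the hypothesised decay of correlations into a tail estimate for $\rho$.

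For the intrinsic layer, I would first establish positive density of Pliss-type hyperbolic times. Because $\mu$ is expanding, $\int\log\|Df^{-1}\|\,d\mu<0$; combined with slow recurrence to $\cS$ (which uses $d\mu/d\Leb\in L^p$ together with (C0) to conclude $\log d(\,\cdot\,,\cS)\in L^1(\mu)$) the Pliss lemma produces, for $\mu$-a.e.\ $x$ and along times $n$ of positive upper density, the simultaneous bounds $\prod_{j=n-k}^{n-1}\|Df(f^jx)^{-1}\|\le e^{-ck}$ and $d(f^{n-k}x,\cS)\ge e^{-\epsilon_0 k}$ for all $1\le k\le n$, with $\epsilon_0$ arbitrarily small. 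Standard graph-transform reasoning then gives an inverse branch $(f^n)^{-1}_x$ defined on a ball of uniform radius $\delta_0$ around $f^nx$ that is exponentially contracting. The crucial step is bounded distortion on that ball, and this is where the corrected conditions (C2)--(C3) are used: the factors $\|Df^{-1}\|^{\pm q}$ appearing in (C2) are, by (C1), polynomially bounded in $d(\,\cdot\,,\cS)^{-1}$, while (C3) already has an explicit $d(\,\cdot\,,\cS)^{-q}$ factor. Choosing $\epsilon_0$ small relative to $c$, $\eta$ and $q$, the typical term $d(f^jy,f^jy')^\eta d(f^jy,\cS)^{-q}$ is dominated by $e^{-c\eta(n-j)}e^{q\epsilon_0(n-j)}\,d(f^ny,f^ny')^\eta$, and summing the geometric series gives both bounded distortion of $|\det Df^n|$ and of $\|Df^n\|$ on the ball, and simultaneously delivers the telescoping estimate \eqref{eq:hyptime}.

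A standard Markov tower construction now produces $F$. Fix a reference ball $B\subset\supp\mu$ small enough for distortion control; for each hyperbolic time $n$ and each $x$ with $f^nx\in B$, the inverse branch $(f^n)^{-1}_x(B)$ is a small ball mapped diffeomorphically onto $B$ by $f^n$ with bounded distortion. Removing already-used first-return branches inductively partitions a positive measure subset $Y\subset B$ into cells $a\in\alpha$ on each of which $\rho$ is constant and $F=f^\rho|_a:a\to Y$ is a full-branch bijection; positive measure of $Y$ follows from the positive density of hyperbolic times together with bounded distortion. For the tail estimate, control $\{\rho>n\}$ by the event ``no hyperbolic first return to $B$ has occurred before time $n$.'' Mollifying $\mathbf 1_B$ to a $C^\alpha$ function and applying the hypothesised decay of correlations bounds the measure of this event by $n$ times the correlation decay rate; this yields $\Leb(\rho>n)=O(n^{-(\beta-1)})$ in case~(1). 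In case~(2), the lower bound on $d\mu/d\Leb$ on $\supp\mu$ allows the argument of~\cite{AFLV11} to balance hyperbolic-time density against correlation decay by optimising over intermediate scales, producing the stretched exponential rate $O(\exp(-c'n^{\gamma'}))$ for any $\gamma'<\gamma/(3\gamma+6)$.

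The principal obstacle is the distortion step under the corrected (C2)--(C3). The original AFLV formulation fails precisely for points $y,y'$ not equidistant from $\cS$, and when either $y$ or $y'$ is close to $\cS$ the local factor $d(\,\cdot\,,\cS)^{-q}$ must be absorbed using the slow-recurrence exponent inherited from the hyperbolic time. Verifying that the distortion series telescopes for \emph{any} pair $y,y'$ in a partition element of $F$ (and not only for the artificial configurations appearing in~\cite[Lemma~5.1]{AFLV11}) is exactly what repairs the gap there. Once this is established, the remainder of the argument is essentially unchanged from~\cite{AFLV11}.
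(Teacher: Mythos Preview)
Your overall architecture (hyperbolic times, Markov tower, then correlation decay $\Rightarrow$ tail of $\rho$) matches the paper's, but you have misidentified where the actual repair is needed.

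The paper observes that the corrected (C1) and (C2) together imply
\[
\big|\log\|Df(x)^{-1}\|-\log\|Df(y)^{-1}\|\big|\le C\,d(x,y)^\eta\,d(x,\cS)^{-q^2},
\]
which is exactly the $C^{1+}$ input required for the Alves--Luzzatto--Pinheiro machinery. Hence the distortion argument and the construction of $F=f^\rho$ go through unchanged; this is dismissed as routine. Your ``principal obstacle'' is therefore not the distortion step.

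The point that genuinely needs fixing lies in the conversion of decay of correlations into a tail bound, specifically in the large deviations estimate for Birkhoff sums of $\phi_1=\log\|(Df)^{-1}\|$. The AFLV argument applies the correlation hypothesis to the truncated observable $\phi_{1,k}=\phi_1\,1_{\{|\phi_1|\le k\}}$, and this requires a H\"older norm bound on $\phi_{1,k}$; the original hypotheses in \cite{AFLV11} are not strong enough to yield this bound (this is the ``first half of Lemma~5.1'' issue). The paper's sole new input is the estimate $\|\phi_{1,k}\|_{C^{\eta'}}\le Ce^{\alpha k}$ for some $\eta'\in(0,1)$, proved from the corrected (C2) via an interpolation of the form $\min\{1,a\}\le a^\eps$. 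Your sketch of the correlation $\Rightarrow$ tail step (``mollifying $\mathbf 1_B$ and applying decay of correlations'') does not touch this point: the obstruction is not returns to $B$ but rather the occurrence of hyperbolic times, and the observable whose H\"older norm must be controlled is $\phi_{1,k}$, not a smoothed indicator. Once that estimate is in place, the rest of \cite{AFLV11} runs verbatim.
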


\begin{rmk} \label{rmk:fF}
The estimate~\eqref{eq:hyptime} is 
a crucial component of the proofs
in~\cite{AFLV11,AlvesLuzzattoPinheiro05}.
 (See the calculation at the end of the proof of~\cite[Lemma~4.1]{AlvesLuzzattoPinheiro05}.)
We make it explicit here since it is used in the proof of Proposition~\ref{prop:tau}.
\end{rmk}

\begin{rmk} \label{rmk:ALP}
Let $x\in M$ be any point with dense preimages in $M$.  By~\cite[Remarks~1.4]{AlvesLuzzattoPinheiro05},
the inducing set $Y$ can be chosen to be an open ball containing $x$.
\end{rmk}

In the remainder of this appendix, we indicate the modifications to the argument in~\cite{AFLV11} required to obtain the corrected version of Theorem~\ref{thm:AFLV}.

We begin by noting that a consequence of (C1) and (C2) is that 
\[
\big|\log\|Df(x)^{-1}\|-\log\|Df(y)^{-1}\| \big|\le C d(x,y)^\eta d(x,\cS)^{-q^2}
\quad\text{for all $x,y\in M\setminus \cS$.}
\]
Combined with (C3),
this means that the $C^{1+}$ version of the $C^2$ set up in~\cite{AlvesBonattiViana00,AlvesLuzzattoPinheiro05} is satisfied.
It is well-known, and routine, that the theory of hyperbolic times and the resulting constructions in~\cite{AlvesBonattiViana00,AlvesLuzzattoPinheiro05} work just as well in the $C^{1+}$ setting.
Hence as in~\cite{AFLV11}, it suffices to 
verify the hypotheses of~\cite[Theorem~2]{AlvesLuzzattoPinheiro05}.
This all proceeds exactly as in~\cite{AFLV11} except for the estimate of $\phi_{1,k}$ in~\cite[Lemma~5.1]{AFLV11}.  
Recall from~\cite{AFLV11} that
$\phi_1=\log\|(Df)^{-1}\|$
and that
$\phi_{1,k}=\phi_1 1_{\{|\phi_1|\le k\}}$.
(The definition in~\cite{AFLV11} has 
$\phi_{1,k}=\phi_1 1_{\{\phi_1\le k\}}$.
but it is clear from the proof of~\cite[Lemma~4.3]{AFLV11} that this is what was meant.)

\begin{prop} \label{prop:AFLV}
For any $\alpha>0$, there exists $\eta'\in(0,1)$, $C>0$ such that
$\|\phi_{1,k}\|_{C^{\eta'}}\le Ce^{\alpha k}$.
\end{prop}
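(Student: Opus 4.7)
The plan is to handle $|\phi_{1,k}|_\infty$ and the Hölder seminorm separately. The $L^\infty$ bound is immediate: $|\phi_{1,k}|_\infty\le k\le Ce^{\alpha k}$ for a suitable constant $C$ depending only on $\alpha$, so this is harmlessly absorbed into the target bound.

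For the Hölder seminorm, the key input is (C2). When $d(x,y)<d(x,\cS)/2$, (C2) gives
\[
|\phi_1(x)-\phi_1(y)|\le 2C\,d(x,y)^\eta e^{q|\phi_1(x)|}.
\]
If in addition both $|\phi_1(x)|,|\phi_1(y)|\le k$, this becomes the interior estimate $|\phi_{1,k}(x)-\phi_{1,k}(y)|\le 2Ce^{qk}d(x,y)^\eta$, so the local $C^\eta$ seminorm of $\phi_{1,k}$ (in the interior of $\{|\phi_1|\le k\}$) is of order $e^{qk}$. For pairs not satisfying $d(x,y)<d(x,\cS)/2$ I would use a chaining argument, subdividing a minimizing path into short segments to which (C2) applies and summing the per-segment bounds. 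To pass from this local $C^\eta$ estimate to a uniform $C^{\eta'}$ bound, I interpolate against the $L^\infty$ bound:
\[
\frac{|\phi_{1,k}(x)-\phi_{1,k}(y)|}{d(x,y)^{\eta'}}\le \min\bigl\{2Ce^{qk}d(x,y)^{\eta-\eta'},\,2k\,d(x,y)^{-\eta'}\bigr\}.
\]
Balancing the two expressions at the critical scale $d(x,y)^\eta\sim k/e^{qk}$ gives a uniform bound of order $k\cdot e^{qk\eta'/\eta}$, and choosing $\eta'<\alpha\eta/q$ absorbs the exponential factor into $Ce^{\alpha k}$, leaving only a harmless polynomial factor $k$.

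The main obstacle is the behavior at the truncation boundary $\{|\phi_1|=k\}$: for pairs with $|\phi_1(x)|\le k<|\phi_1(y)|$ one has $|\phi_{1,k}(x)-\phi_{1,k}(y)|=|\phi_1(x)|$, which can be close to $k$ even when $x$ and $y$ are very close. I would argue that by continuity of $\phi_1$ on $M\setminus\cS$, any such pair can be connected by a continuous path passing through a point $z$ with $|\phi_1(z)|=k$, and applying the interior estimate along a chain from $x$ to $z$ yields the bound $k-|\phi_1(x)|=|\phi_1(z)|-|\phi_1(x)|\le 2Ce^{qk}d(x,y)^\eta$. Combining this with the trivial bound $|\phi_1(x)|\le k$ and the same interpolation as in the interior case recovers the desired estimate (possibly with a slightly smaller value of $\eta'$). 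A technically cleaner version of this argument may require first replacing the sharp indicator $\mathbf{1}_{\{|\phi_1|\le k\}}$ by a Lipschitz approximation supported on $\{|\phi_1|\le k+1\}$, proving the estimate for the regularized function, and then returning to $\phi_{1,k}$ by noting the difference is controlled on the thin shell near the boundary.
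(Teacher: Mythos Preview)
Your overall strategy---interpolating between the trivial bound $|\phi_{1,k}(x)-\phi_{1,k}(y)|\le 2k$ and a (C2)-derived estimate $|\phi_{1,k}(x)-\phi_{1,k}(y)|\le C' e^{qk}d(x,y)^\eta$, via $\min\{1,a\}\le a^\eps$ with $\eps$ of order $\alpha/q$---is exactly the paper's approach. The paper's proof is much more compressed: it simply asserts the (C2)-estimate for $\phi_{1,k}$ directly, takes $\eps=\alpha/(2q)$ and $\eta'=\eps\eta$, and concludes in two lines, without separately discussing either the hypothesis $d(x,y)<d(x,\cS)/2$ in (C2) or the truncation boundary.

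The genuine gap is in your handling of the truncation boundary. For a pair with $|\phi_1(x)|\le k<|\phi_1(y)|$ the quantity to bound is $|\phi_{1,k}(x)-\phi_{1,k}(y)|=|\phi_1(x)|$. Your intermediate-point argument yields $k-|\phi_1(x)|\le 2Ce^{qk}d(x,y)^\eta$, which says $|\phi_1(x)|$ is \emph{close to $k$} when $d(x,y)$ is small---the opposite of what you need. Combined only with $|\phi_1(x)|\le k$, no H\"older estimate follows; indeed $|\phi_1(x)|/d(x,y)^{\eta'}\to\infty$ as $y$ approaches the level set $\{|\phi_1|=k\}$ from outside. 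The Lipschitz-cutoff idea does not rescue this: one cannot pass from the mollified function back to $\phi_{1,k}$ without reintroducing the size-$k$ jump. In fact, with the literal definition $\phi_{1,k}=\phi_1\mathbf{1}_{\{|\phi_1|\le k\}}$ the function is discontinuous across $\{|\phi_1|=k\}$, and the paper's displayed inequality does not follow from (C2) in this case either. The argument becomes clean if one reads $\phi_{1,k}$ as the clipped function $\max(-k,\min(k,\phi_1))$: then the clipping is $1$-Lipschitz in $\phi_1$, so $|\phi_{1,k}(x)-\phi_{1,k}(y)|\le|\phi_1(x)-\phi_1(y)|$ in all cases, and the boundary difficulty disappears.
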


\begin{proof}  
We can suppose without loss that $\alpha<2q$.

Let $x,y\in M$. It is immediate that $|\phi_{1,k}(x)|\le k$ and that
$|\phi_{1,k}(x)-\phi_{1,k}(y)|\le 2k$.
Also, by (C2), assuming without loss that $\phi_{1,k}(x)\le \phi_{1,k}(y)$, 
\[
|\phi_{1,k}(x)-\phi_{1,k}(y)| \le C_1 d(x,y)^\eta (e^{q\phi_{1,k}(y)}+e^{-q\phi_{1,k}(x)})
\le C_1' d(x,y)^\eta e^{q k}.
\]
The inequality $\min\{1,a\}\le a^\eps$ holds for all $a\ge0$, $\eps\in[0,1]$.
 Hence 
taking $\eps=\frac12\alpha/q$ and $\eta'=\eps\eta$ we obtain that
\[
|\phi_{1,k}(x)-\phi_{1,k}(y)| 
\le C_2 k\min\{1,d(x,y)^\eta e^{q k}\}
\le C_2k
d(x,y)^{\eta'} e^{\frac12\alpha k}\le C_2' d(x,y)^{\eta'} e^{\alpha k}.
\]
We have shown that $\|\phi_{1,k}\|_{C^{\eta'}}=O( k+e^{\alpha k})
=O( e^{\alpha k})$ as required.
\end{proof}

The remainder of the proof of Theorem~\ref{thm:AFLV} proceeds exactly as in~\cite{AFLV11}.  (We note that in~\cite{AFLV11} it is asserted that $\eta'=\alpha$, but this is not required in the proof.)

%%%%%%%%%%%%%%%%%%%%%%%%%%%%%%%%%%%%%%%%%%%%%%%%%%%%%%%%%%%%%%%%%%

 \def\cprime{$'$}

\def\polhk#1{\setbox0=\hbox{#1}{\ooalign{\hidewidth
  \lower1.5ex\hbox{`}\hidewidth\crcr\unhbox0}}}

\end{document}